\documentclass[]{article}

\usepackage{amsmath}
\usepackage{amsfonts}
\usepackage{amssymb}
\usepackage{amsthm}
\usepackage{mathtools}
\usepackage{color}
\usepackage{esint}
\usepackage{hyperref}
\usepackage{enumitem}
\usepackage{appendix}
\usepackage{url}
\usepackage{titlesec}
\usepackage{hyperref}
\usepackage{chngcntr}
\usepackage{apptools}
\AtAppendix{\counterwithin{theorem}{section}}

\titleformat{\subsection}[runin]
{\normalfont\normalsize\bfseries}{\thesubsection}{.5em}{}

\titlespacing*{\subsubsection}
{0pt}{.5em}{.5em}

\titleformat{\subsubsection}[runin]
{\normalfont\normalsize\itshape}{\thesubsection}{.5em}{}

\theoremstyle{plain}
\newtheorem{theorem}{Theorem}

\newtheorem{proposition}[theorem]{Proposition}
\newtheorem{corollary}[theorem]{Corollary}
\newtheorem{lemma}[theorem]{Lemma}

\makeatletter
\def\thm@space@setup{\thm@preskip=2pt
	\thm@postskip=2pt}
\makeatother
\theoremstyle{plain}

\newenvironment{manualtheorem}[1]{%
	\manualtheoreminner
}{\endmanualtheoreminner}

\theoremstyle{definition}
\newtheorem{definition}[theorem]{Definition}
\newtheorem{example}[theorem]{Example}
\newtheorem{remark}[theorem]{Remark}

\newtheorem*{deff}{Definition}

\newcommand{\C}{\mathbb{C}}
\newcommand{\R}{\mathbb{R}}
\newcommand{\Z}{\mathbb{Z}}
\newcommand{\CP}{\mathbb{CP}}
\newcommand{\RP}{\mathbb{RP}}
\newcommand{\h}{\mathbb{H}}

\newcommand{\bz}{\bar{z}}
\newcommand{\bw}{\bar{w}}

\newcommand{\mF}{\mathcal{F}}
\newcommand{\bx}{\mathbf{x}}
\newcommand{\inn}{\langle \cdot, \cdot \rangle}
\newcommand{\p}{\partial}

\newcommand{\m}{\mathcal{M}_{0,4}}
\newcommand{\mn}{\mathcal{M}_{0, n}}
\newcommand{\tn}{\widetilde{\nabla}}
\newcommand{\MS}{\mathcal{MS}}

\newcommand{\MSm}{\mathcal{MS}_{(1,1)}^{(2)}(\alpha)}
\newcommand{\MSmm}{\mathcal{MS}_{(1,1)}^{(2)}(2\alpha)}

\newcommand{\ns}{\nabla^{\text{sph}}}
\newcommand{\nd}{\nabla^{\text{D}}}

\DeclareMathOperator{\Id}{Id}
\DeclareMathOperator{\tr}{tr}
\DeclareMathOperator{\hol}{(hol)}
\DeclareMathOperator{\Hol}{Hol}

\DeclareMathOperator{\re}{Re}
\DeclareMathOperator{\im}{Im}

\DeclareMathOperator{\Fix}{Fix}
\DeclareMathOperator{\dil}{dil}

\def\<#1>{\mathinner{\langle#1\rangle}}

\newcommand{\Address}{{
		\bigskip
		\begin{flushright}
			
			\textsc{King's College London, Department of Mathematics\\
				Strand, London, WC2R 2LS, United Kingdom} \\
			{\href{mailto:martin.deborbon@kcl.ac.uk}{\nolinkurl{martin.deborbon@kcl.ac.uk}}}, {\href{mailto:dmitri.panov@kcl.ac.uk}{\nolinkurl{dmitri.panov@kcl.ac.uk}}}
		\end{flushright}
}}

\title{Dunkl connections on \(\C^2\) and spherical metrics}
\date{\today}
\author{Martin de Borbon and Dmitri Panov}


\begin{document}

\maketitle

\begin{abstract}
	We show that general Dunkl connections on \(\C^2\) do not preserve non-zero Hermitian forms. Our proof relies on recent understanding of the non-trivial topology of the moduli space of spherical tori with one conical point.
\end{abstract}


\section{Introduction}\label{sec:intro}

K\"ahler metrics of constant holomorphic sectional curvature can be efficiently studied in terms of flat holomorphic connections that preserve appropriate non-zero Hermitian forms.
One instance of this occurs in the work of Couwenberg-Heckmann-Looijenga \cite{CHL} in which they produce new interesting families of Fubini-Study, flat, and complex hyperbolic K\"ahler metrics on complements of projective hyperplane arrangements. This provides a unified geometric framework that recovers as special cases the principal results of Deligne-Mostow on Lauricella hypergeometric functions and the work of Barthel-Hirzebruch-H\"ofer on line arrangements in the projective plane.

The above article develops around the central notion of a \emph{Dunkl connection} and it is shown that in the case of complex reflection arrangements the associated Dunkl connections preserve non-zero Hermitian forms. However, in the more general case where the arrangement is not of reflection type they write
\cite[p. 112]{CHL}:
 ``\emph{We do not know whether a Dunkl system with real exponents always admits a nontrivial flat Hermitian form, not even in the case dim V = 2} (\emph{the answer is probably no}).''

In this paper we settle this question and confirm this expectation. We consider Dunkl connections with simple poles at \(4\) complex lines and we show that most of them do not preserve any non-zero Hermitian form. Our proof is by contradiction.
If the statement were false then, using an analytic continuation argument, we can produce a section of the forgetful map from the moduli space of marked spherical tori with one conical point to the configuration space \(\m\) that would contradict the non-trivial topology of the space of spherical metrics established in \cite{EPM}.


\subsection{Main result.}\label{subsec:mainresult}
We work on \(\C^2\) with standard linear coordinates \(z, w\).
Let \(L_1, \ldots, L_n \subset \C^2\) be \(n \geq 3\) complex lines going through the origin.
We consider meromorphic connections \(\nabla\) on \(T\C^2\) that on the trivialization given by the coordinate vector fields \(\p_z, \p_w\)  have the form
\begin{equation}\label{eq:std1}
\nabla = d - \sum_i A_i \frac{d \ell_i}{\ell_i}	
\end{equation}
where \(A_i\) are non-zero \((2\times 2)\)-matrices of complex numbers and \(\ell_i\) are defining linear equations of \(L_i\). 
We also assume that 
\begin{equation}\label{eq:std2}
\forall i \hspace{2mm} \ker A_i = L_i \hspace{2mm}  \text{ and } \hspace{2mm} \sum_i A_i = c \cdot \Id \hspace{2mm} \text{ for some } c \in \C.	
\end{equation}
The geometric content of \eqref{eq:std2} is that \(\nabla\) is torsion free and flat, so it defines an affine structure on \(\C^2\) with certain singularities along the given lines.  

We say that \(\nabla\) is \emph{standard} if it is of the form given by Equations \eqref{eq:std1} and \eqref{eq:std2}. Let us now introduce the main objects of study of this paper.
\vspace{2mm}
\begin{deff}[{\cite[Definition 2.8]{CHL}}]
	A standard connection \(\nabla\) is Dunkl if there is a positive definite Hermitian inner product on \(\C^2\) that makes every residue matrix \(A_i\) self-adjoint with respect to it.
\end{deff}
\vspace{2mm}

Our first result is Proposition \ref{prop:Dunkl}. It shows that if we are given a collection of lines \(L_i\) and positive real numbers \(a_i>0\) then there exists a unique Dunkl connection with prescribed residue traces
\begin{equation}
	\tr A_i = a_i
\end{equation}
at \(L_i\) if and only if the `weights' \(a_i\) satisfy the `balancing condition'
\[
a_j < \sum_{i \neq j} a_i \hspace{2mm} \text{ for all } j.
\]

After establishing this existence and uniqueness result for Dunkl connections, we tackle the main question of whether they preserve non-zero Hermitian forms. To answer this question we exploit the natural correspondence between standard connections \(\nabla\) with \emph{unitary holonomy}  and \emph{spherical metrics} on \(\CP^1\) with cone angles \(2\pi(1-a_i)\) at the points corresponding to the lines \(L_i\).
This correspondence is defined by an elementary construction that locally replicates the making of the Fubini-Study metric on \(\CP^1\) out of the flat metric on \(\C^2\) by taking the K\"ahler quotient with respect to the  \(S^1\)-action given by scalar multiplication by complex units. We review this construction in Appendix \ref{sec:unitaryconnect}.

We restrict to the simple case of four lines and equal residue traces \(a_i=a_j\) for all \(i,j\).
Our main result is the next.

\begin{theorem}\label{thm:main}
	Suppose that \(n=4\) and \(a_i = a\) for all \(1 \leq i \leq 4\) and some \(a > 0\). Then, for \emph{generic} configurations of lines \(L_1, \ldots, L_4\) and values of \(a\), the Dunkl connection with \(\tr A_i =a\) at \(L_i\) doesn't preserve any non-zero Hermitian form.
\end{theorem}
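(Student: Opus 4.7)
The plan is to argue by contradiction, leveraging analytic dependence on parameters together with the topology of a moduli space of spherical metrics studied in \cite{EPM}. The parameters of the problem --- the configuration of four lines modulo the diagonal $\C^*$-action, giving a point in $\m$, together with the common residue trace $a>0$ --- determine the Dunkl connection $\nabla$ real analytically by Proposition \ref{prop:Dunkl}. I assume, for contradiction, that the theorem fails. Since the property of preserving some non-zero Hermitian form is a closed condition on the holonomy representation, the locus of parameters for which this occurs would then contain a nonempty open subset $U$ of the parameter space.

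On $U$, the first step is to normalize the preserved Hermitian form $H$ so that it depends real analytically on the parameters. Then, by the correspondence reviewed in Appendix \ref{sec:unitaryconnect}, the $S^1$-Kähler reduction converts each such Dunkl connection into a spherical metric on $\CP^1$ with conical points of angle $2\pi(1-a)$ at the four marked points corresponding to $L_1, \ldots, L_4$. Passing to the elliptic double cover of $\CP^1$ branched at these four cone points yields a family of spherical structures on the torus equivariant under the hyperelliptic involution; after the appropriate identifications, this family should define an analytic map $\Phi$ from $U$ into a suitable moduli space of marked spherical tori with prescribed conical data (compare with $\MSm$), constituting a local section of the forgetful map to the underlying complex structure in $\m$.

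To close the argument, I would extend $\Phi$ by analytic continuation along paths in $\m$ to a single-valued global section of the forgetful map, and then invoke \cite{EPM} to derive the contradiction, since the results there preclude the existence of such a section (the forgetful map has non-trivial monodromy). The main obstacle, in my view, is the analytic-continuation step: one needs the preserved form $H$, and therefore the resulting spherical metric, to propagate consistently around every closed loop in $\m$, so that $\Phi$ is globally single-valued rather than merely defined up to monodromy. A second subtlety is ruling out indefinite $H$ --- in which case the Kähler quotient would produce a pseudo-spherical rather than a spherical metric --- presumably using the positivity from the Dunkl hypothesis or by treating this case separately.
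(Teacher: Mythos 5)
Your overall strategy --- contradiction, conversion to spherical metrics, a section of the forgetful map to \(\m\), and the topology of the moduli space of \cite{EPM} --- is the same as the paper's, but several steps you leave open or describe incorrectly are exactly where the real work lies. First, the branched-cover step is wrong as stated: the elliptic double cover of \(\CP^1\) branched at the four cone points of equal angle \(2\pi\alpha\) produces a torus with \emph{four} cone points of angle \(4\pi\alpha\), not an element of \(\MSmm\), which parametrizes tori with \emph{one} cone point. The paper first proves each metric \(g_\lambda\) is invariant under the Klein four-group of M\"obius maps permuting \(0,1,\infty,\lambda\) (this uses uniqueness of the Dunkl connection plus irreducibility of its holonomy), pushes down to a metric with angles \(\pi,\pi,\pi,2\pi\alpha\), and only then takes the double cover; it also needs Lemma \ref{lem:k4} to see that the induced map on \(\m\) is the identity, so that one really gets a section. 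Second, the two difficulties you flag are not resolved the way you suggest. Single-valuedness of the section does not come from controlling monodromy of an analytic continuation: it comes from proving the preserved form is \emph{unique up to scale} (Proposition \ref{prop:flatbundle}: no invariant rank-\(1\) distributions for suitable \(a\)), after which the metric \eqref{eq:sphmet} is canonically determined. And positivity of \(h_\lambda\) does \emph{not} follow from the Dunkl hypothesis (the Dunkl inner product is generally not preserved by \(\nabla\)); the paper anchors definiteness at the \(B_2\) arrangement \(\lambda=-1\) via an explicit pullback computation (Proposition \ref{prop:dihedral}) and propagates it by connectedness and non-degeneracy.

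Two further gaps would sink the argument even granting the above. Your open set \(U\) lives in \(\m\times\R\), and you must upgrade ``preserved on \(U\)'' to ``preserved for \emph{all} \((\lambda,a)\)'' via the analytic persistence lemma (Lemma \ref{genericparallel}); this is essential because the contradiction only exists for \(\alpha>5/2\) (so that the genus of \(\MSmm\) is \(\geq 1\)), i.e.\ for values of \(a\) that need not lie anywhere near \(U\). For small \(\alpha\) the moduli space can have genus \(0\) and there is no obstruction. Finally, ``the forgetful map has non-trivial monodromy'' is not the contradiction: both \(\MSmm\) and \(\m\) are \emph{punctured} surfaces with free fundamental groups, and a section of a map between such spaces is not a priori obstructed. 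The paper must prove both \(F\) and \(s\) are proper (using systole estimates from \cite{MPII} and \cite{EPM}) so that they extend over the punctures to maps \(\Sigma_g\to S^2\) and \(S^2\to\Sigma_g\) with \(F\circ s=1_{S^2}\); only then does \(\pi_2(\Sigma_g)=0\) give the contradiction.
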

\vspace{2mm}

\begin{remark}
	As a mater of fact all standard (and in particular all Dunkl) connections with  simple poles at \(3\) lines and real residue traces preserve non-zero Hermitian forms, see Section \ref{sec:3linesHF}. Therefore the case of \(4\) lines is in some sense the simplest. 
\end{remark}

\begin{remark}
	The more general case of  \(n \geq 4\)  lines can be deduced from the statement of Theorem \ref{thm:main} by taking arbitrary small residue traces at the other \(n-4\) lines, see Section \ref{sec:extension}. This fits in with the popular saying  `finding hay in a haystack', once we prove existence of one Dunkl connection that doesn't preserve any non-zero Hermitian form then it follows that most of them have this property. However, we don't write a single explicit example.
\end{remark}

\begin{proof}[Sketch proof of Theorem \ref{thm:main}]
	Assume that the statement is false. Then we run an \emph{analytic continuation argument} that produces a continuous family of spherical metrics \(g_{\lambda}\) on \(\CP^1\) with \(4\) cone points of the same angle \(2\pi\alpha\) at the configuration of points \(0, 1, \infty, \lambda\), where \(\alpha\) is a positive real number to be determined later. 
	
	We show that each metric \(g_{\lambda}\) is invariant under the action of the Klein \(4\)-group of M\"obius transformation that preserve the configuration of \(4\) cone points, so we can push it forward to a metric on \(\CP^1\) with \(4\) cone points of angles \(2\pi\alpha, \pi, \pi, \pi\). We take the elliptic curve \(C \xrightarrow{2:1} \CP^1\) that branches over these \(4\) points and pull back the metric to obtain a spherical torus \(\hat{g}_{\lambda}\) with one conical point of angle \(4\pi\alpha\). This way we obtain a section \(s: \lambda \mapsto \hat{g}_{\lambda}\) of the forgetful map 
	\begin{equation}\label{eq:Fmap}
	F: \MSmm \to  \m 
	\end{equation}
	where \(\MSmm\) is the moduli space of \emph{marked} spherical tori with one cone point of angle \(4\pi\alpha\). If \(\alpha>5/2\) and \(2\alpha \notin \Z\) then by \cite{EPM} the moduli space \(\MSmm\) is homeomorphic to a punctured compact orientable surface of genus \(\geq 1\). Using results in \cite{MPII} and \cite{EPM} we show that both \(F\) and the section \(s\)  extend continuously over the punctures. On the other hand, it is easy to see that a continuous map \(F: \Sigma \to S^2\) from a surface \(\Sigma\) of genus \(\geq 1\) to the \(2\)-sphere has no continuous right inverse \(F \circ s =1_{S^2}\). Therefore the existence of the section \(s\) gives us a contradiction and proves the theorem. 
\end{proof}

\subsection{Outline.}
The main work done in the paper is to carry out the analytic continuation argument in the beginning of the sketch proof above. This relies several results which are developed on the preliminary sections as detailed next.

In Section \ref{sec:Dunkl} we prove Proposition \ref{prop:Dunkl} which classifies Dunkl connections on \(\C^2\) in the case where all residue traces have the same sign. Our proof shows that the Dunkl connection depends \emph{analytically} on the configuration of lines and its residue traces. This analytic dependence is crucial in our argument.

In Section \ref{sec:holonomy} we give a description for the holonomy group of standard connections (see Lemma \ref{lem:mon}) and analyse invariant foliations. The main result is Proposition \ref{prop:flatbundle} which gives a numerical criterion in term of the residue traces \(a_i\) that guarantees irreducible holonomy. 

In Section \ref{sec:Dunkunitary} we study the Dunkl unitary case corresponding to reflection arrangements made of \(3\) and \(4\) lines. The main result is Proposition \ref{prop:dihedral} which identifies the values of \(a \in \R\) for which the Dunkl connection with equal residue traces \(a_i =a\) and simple poles at the dihedral \(B_2\)-arrangement has unitary holonomy. 

In Section \ref{sec:pf} we state Theorem \ref{thm:main} in its precise version and complete the proof of it. The results of Section \ref{sec:Dunkl} imply that the space of Dunkl connections that preserve a non-zero Hermitian form make a real analytic subset of the configuration space of lines. The main point is to show that there is at least one Dunkl connection that does not preserve any non-zero Hermitian form. Here we argue by contradiction and assume that all Dunkl connections preserve non-zero Hermitian forms. Then we combine the results of Sections \ref{sec:holonomy} and \ref{sec:Dunkunitary} together to produce a family of Dunkl \emph{unitary} connections that leads to the desired family of spherical metrics and complete the sketched proof.
The facts needed on moduli of spherical surfaces are collected in
Appendix \ref{sec:sphmet}.



\subsection*{Acknowledgments}
We want to thank Eduard Looijenga for answering our questions. 
This work was supported by the
EPSRC Project EP/S035788/1.

\section{Dunkl connections on \(\C^2\)}\label{sec:Dunkl}
The main result of this section is Proposition \ref{prop:Dunkl}, which gives necessary and sufficient conditions for the existence and uniqueness of a Dunkl connection with prescribed residue traces of the same sign. We reduce this problem to the more familiar one of finding a conformal automorphism of \(S^2\) that puts the centre of mass of a given weighted configuration of points at the origin. We solve this using the variational method, by minimizing a suitable convex function in hyperbolic \(3\)-space.\footnote{For the more general case of non-positively curved symmetric spaces see \cite{KLM}.}

\subsection{Dunkl inner products.}
Let \(\nabla\) be a Dunkl connection given by Equations \eqref{eq:std1} and \eqref{eq:std2}. Let \(\inn_H\) be a positive definite Hermitian inner product in \(\C^2\) such that the residue matrices \(A_i\) are self-adjoint with respect to it. This implies that the residue traces
\begin{equation}\label{eq:aidef}
a_i = \tr A_i 
\end{equation}
are real numbers and, if we denote by \(P_i\) the projection to the orthogonal complement of \(L_i\) determined by \(\inn_H\), then
\[
A_i = a_i \cdot P_i .
\]
This motivates the next.
\vspace{2mm}
\begin{definition}
	Let \(L_i \subset \C^2\) be \(n \geq 3\) complex lines going through the origin and let \(a_i\) be non-zero real numbers. A \emph{Dunkl inner product} adapted to \((L_i, a_i)\) is a positive definite Hermitian form \(\inn_H\) on \(\C^2\) such that
	\begin{equation}\label{eq:dunkl}
	\sum_i a_i \cdot P_i = c \cdot \Id 
	\end{equation}
	where \(P_i\) is the orthogonal projection to \(L^{\perp}\) given by \(\inn_H\). 
\end{definition}
\vspace{2mm}
\begin{remark}\label{rmk:pos}
	If \(\inn_H\) is a Dunkl inner product adapted to \((L_i, a_i)\) then it is also adapted to \((L_i, t  a_i)\) for every \(t \in \R^*\). 
\end{remark}



The main result of this section is the following.

\begin{proposition}\label{prop:Dunkl}
	Suppose that \(a_i \in \R^*\) have all the same sign. Then
	there is a Dunkl inner product \(\inn_H\) adapted to \((L_i, a_i)\) if and only if
	\begin{equation}\label{eq:stability}
		|a_j| < \sum_{i \neq j} |a_i| \hspace{3mm} \forall j .
	\end{equation}
	Furthermore, whenever \eqref{eq:stability} holds, the inner product \(\inn_H\) is unique up to scalar.
\end{proposition}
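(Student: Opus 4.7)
The plan is to reduce the existence and uniqueness of a Dunkl inner product to a convex minimization problem on hyperbolic $3$-space $\h^3$. I identify $\h^3$ with the space of positive definite Hermitian inner products on $\C^2$ modulo positive scalar, so that its boundary at infinity $\p \h^3$ is naturally $\CP^1$ and each line $L_i = \C v_i$ corresponds to a boundary point $p_i \in \p \h^3$. By Remark \ref{rmk:pos} I may assume $a_i > 0$ throughout.

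The key ingredient is the scale-invariant function
\[
\tilde F(H) \;=\; -\sum_i a_i \, \log \frac{v_i^* H v_i}{\sqrt{\det H}},
\]
which descends to a well-defined $F : \h^3 \to \R$, independent of the representatives $v_i$. On the slice $\{\det H = 1\}$, each summand $-\log v_i^* H v_i$ is a Busemann function at $p_i$, so $F$ is, up to an additive constant, a positive combination of Busemann functions. To identify the critical points of $F$ with Dunkl inner products, I would use $\mathrm{GL}(2,\C)$-equivariance to reduce to the case $H = \Id$, where a direct calculation with $v_i$ chosen unit-norm gives $dF_{\Id}(h) = -\tr\bigl((M - \tfrac{1}{2}(\sum_j a_j)\Id)\, h\bigr)$ for every Hermitian $h$, with $M = \sum_i a_i v_i v_i^*$. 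This vanishes iff $M = \tfrac{1}{2}(\sum_j a_j)\Id$, which, using $P_i^{\Id} = \Id - v_i v_i^*$, is equivalent to $\sum_i a_i P_i^{\Id} = \tfrac{1}{2}(\sum_j a_j)\Id$, namely the Dunkl condition.

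Next I would verify that $F$ is strictly convex and coercive under the stability hypothesis. Each Busemann function $b_{p_i}$ is convex and is strictly convex along every geodesic not asymptotic to $p_i$; since $n \geq 3$ and a geodesic has only two endpoints, at least one $p_i$ is not an endpoint of any given geodesic, so $F$ is strictly convex along every geodesic. Along a unit-speed geodesic ray asymptotic to $p \in \p \h^3$ the Busemann function $b_{p_i}$ has asymptotic slope $-1$ if $p_i = p$ and $+1$ otherwise, so $F$ has asymptotic slope $\sum_{p_i \neq p} a_i - \sum_{p_i = p} a_i$ in the direction of $p$. For pairwise distinct lines this is positive in every boundary direction if and only if $a_j < \sum_{i\neq j} a_i$ for every $j$, which is the stability condition \eqref{eq:stability}.

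Both directions of the proposition now fall out. If \eqref{eq:stability} holds, $F$ is strictly convex and coercive, so it attains a unique minimum $H^* \in \h^3$, giving the unique (up to scalar) Dunkl inner product. If \eqref{eq:stability} fails, say $a_j \geq \sum_{i\neq j} a_i$, then along the ray to $p_j$ the strictly convex $F$ has non-positive asymptotic slope and is therefore strictly decreasing, so $F$ has no critical point anywhere. The main technical hurdle I expect is the clean identification of the Dunkl equation with the critical-point equation of $F$, in particular verifying the invariant expression of the Busemann functions in terms of inner products and checking that $\mathrm{GL}(2,\C)$-equivariance reduces the gradient computation to $H=\Id$; once this identification is in place, the convex-geometric part is standard.
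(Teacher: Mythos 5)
Your proposal follows essentially the same route as the paper's proof: both identify Dunkl inner products adapted to \((L_i,a_i)\) with the critical points of a weighted combination of Busemann functions on \(\h^3\), get uniqueness from strict convexity (at least three distinct boundary points), and get existence from coercivity under the stability condition, with the failure of stability ruling out critical points via the asymptotic slopes. One sign slip to fix: with the convention you are using (Busemann functions convex, with slope \(-1\) toward their defining point), the Busemann function at the boundary point determined by \(L_i=\C v_i\) is \(+\log\bigl(v_i^*Hv_i/\sqrt{\det H}\bigr)\), so the \(\tilde F\) you wrote down is the \emph{negative} of a positive combination of Busemann functions — it is concave and tends to \(-\infty\) in every direction, hence has no minimum; you should drop the overall minus sign (or maximize), which changes nothing in your critical-point computation since negation preserves critical points and your identification of the critical-point equation with the Dunkl condition is correct. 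The only other point to tighten is the passage from ``positive asymptotic slope toward every boundary point'' to coercivity, which needs a small compactness argument over directions (this is exactly the paper's Lemma \ref{lem:Fproperness}); your treatment of the ``only if'' direction via non-existence of critical points is a clean alternative to the paper's barycentre/triangle-inequality argument in Corollary \ref{cor:stab}.
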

\vspace{2mm}

The proof of this result is carried out in Section \ref{se:pfdun}, the sketch is as follows. The space of rays of positive definite Hermitian inner products in \(\C^2\) is hyperbolic \(3\)-space \(\h^3\). The rank \(1\) orthogonal projections make the unit sphere at infinity of the Poicar\'e ball model of \(\h^3\). The lines \(L_i\) define points \(\bx(L_i)\) in \(S^2\) (see Lemma \ref{lem:sphere}) and we take the hyperbolic barycentre of the points \(\bx(L_i)\) with weights \(a_i\). This barycentre exists precisely when Equation \eqref{eq:stability} is satisfied (see Lemma \ref{lem:Fproperness}) and it defines the desired inner product \(\inn_H\). 

\subsection{Hermitian \((2 \times 2)\)-matrices.}
The space of Hermitian \((2\times2)\)-matrices
\[ H = \begin{pmatrix}
	r & t \\
	\bar{t} & s
	\end{pmatrix} 
\hspace{3mm}
r, s \in \R, \hspace{2mm} t \in \C 
\]
is a \(4\)-dimensional real vector space. The basis
\begin{equation*} \label{eq:basis}
	\sigma_0 =
\begin{pmatrix}
	1 & 0 \\
	0 & 1
\end{pmatrix},
\hspace{1mm}
\sigma_1 =
\begin{pmatrix}
	-1 &  0\\
	0 & 1
\end{pmatrix},
\hspace{1mm}
\sigma_2 =
\begin{pmatrix}
	0 & -1 \\
	-1 & 0
\end{pmatrix},
\hspace{1mm}
\sigma_3 =
\begin{pmatrix}
	0 & -i \\
	i & 0
\end{pmatrix} 
\end{equation*}
gives us linear coordinates \(H = \sum_i x_i \sigma_i\) with
\begin{equation}\label{eq:xcoord}
	r = x_0 - x_1, \hspace{2mm} s = x_0 + x_1, \hspace{2mm} -t = x_2 + i x_3 .
\end{equation}


\begin{lemma}\label{lem:sphere}
	Let \(L \subset \C^2\) be a complex line going through the origin and write \(P_{L^{\perp}}\) for the Hermitian matrix given by orthogonal projection to \(L^{\perp}\) with respect to the standard Euclidean inner product. Define
	\begin{equation}
		\bx(L) = 2 \cdot \Pi(P_{L^{\perp}}) 
	\end{equation}
	where \(\Pi(x_0, x_1, x_2, x_3) = (x_1, x_2, x_3)\) is the trace-free part projection.
	\begin{itemize}
		\item[(i)] If \(L=\C \cdot (\lambda, 1)\) then
		\begin{equation}\label{eq:sterproj}
			\bx(L) = \frac{1}{1+|\lambda|^2} (|\lambda|^2-1, 2 \re(\lambda), 2\im(\lambda)) .
		\end{equation}
		Thus the map \(L \mapsto \bx(L)\) defines the conformal bijection between \(\CP^1\) and the unit sphere \(S^2 \subset \R^3\) given by stereographic projection.
		\item[(ii)] The sum \(\sum a_i P_{L_i^{\perp}}\) is a constant multiple of the identity if and only if
		\begin{equation}\label{eq:barycenter}
			\sum_i a_i \cdot \bx(L_i) = 0 \in \R^3 .
		\end{equation}
	\end{itemize}
\end{lemma}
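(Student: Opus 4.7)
The plan is to reduce both parts to a direct matrix computation, since the lemma is essentially a bookkeeping statement matching the projective space of rank-one Hermitian projections with the $2$-sphere.

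For part (i), I would parametrize the line as $L = \C \cdot v$ with $v = (\lambda, 1)$ and invoke the standard formula $P_{L^\perp} = \Id - v v^*/\|v\|^2$ to obtain
\[
P_{L^\perp} = \frac{1}{1+|\lambda|^2} \begin{pmatrix} 1 & -\lambda \\ -\bar\lambda & |\lambda|^2 \end{pmatrix}.
\]
Reading off the coordinates $(x_0, x_1, x_2, x_3)$ via the identities \eqref{eq:xcoord} and doubling the last three components produces exactly the formula \eqref{eq:sterproj}. A short calculation then gives $|\bx(L)|^2 = 1$, so the image lies on the unit sphere, and one recognises the resulting expression as the standard stereographic projection (with the north pole $(1,0,0)$ corresponding to $\lambda = \infty$). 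The limiting case $L = \C \cdot (1, 0)$ can be handled either by passing to the limit $\lambda \to \infty$ in \eqref{eq:sterproj} or by repeating the computation with $v = (1, \mu)$ and letting $\mu \to 0$.

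For part (ii), I would observe that the trace-free projection $\Pi \colon H \mapsto (x_1, x_2, x_3)$ is a real-linear map on the space of Hermitian $(2 \times 2)$-matrices whose kernel is exactly $\R \cdot \Id = \R \cdot \sigma_0$. Consequently, $\sum_i a_i P_{L_i^\perp}$ is a constant multiple of the identity precisely when $\Pi\bigl(\sum_i a_i P_{L_i^\perp}\bigr) = 0$, which by linearity coincides with $\tfrac{1}{2}\sum_i a_i \bx(L_i) = 0$. This gives \eqref{eq:barycenter} at once.

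The only place that requires care is in tracking the sign conventions of the basis $\sigma_2, \sigma_3$, so that the off-diagonal entry $-\lambda/(1+|\lambda|^2)$ of $P_{L^\perp}$ is translated correctly into the coordinates $x_2, x_3$ through \eqref{eq:xcoord}. I do not expect any substantial obstacle: the lemma is a routine identification that sets up the dictionary between rank-one Hermitian projections and points of $S^2$, which is then used to prove Proposition \ref{prop:Dunkl} via the hyperbolic barycenter construction in $\h^3$.
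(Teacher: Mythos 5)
Your proposal is correct and follows essentially the same route as the paper: an explicit computation of the rank-one projection matrix followed by the coordinate identification \eqref{eq:xcoord} for part (i), and the linearity of \(\Pi\) together with \(\ker \Pi = \R \cdot \Id\) for part (ii). The only cosmetic difference is that the paper phrases the computation in (i) via the Hopf map applied to a unit vector \((z,w)\), whereas you substitute \(v=(\lambda,1)\) directly; the resulting matrix and coordinates are identical.
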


\begin{proof}
	\((i)\) Take a complex line \(L = \C \cdot (z, w) \subset \C^2\) with \(|z|^2+|w|^2=1\). Orthogonal projection to \(L^{\perp}\) is given by the Hermitian matrix
	\begin{equation}\label{eq:orthogonalproj}
		P_{L^{\perp}}
		=
		\begin{pmatrix}
			|w|^2 & - z \bw \\
			-\bz w & |z|^2
		\end{pmatrix} .	
	\end{equation}
	Using Equation \eqref{eq:xcoord} to compute the \(x_1,x_2,x_3\) coordinates of \(P_{L^{\perp}}\) we obtain that
	\begin{equation}\label{eq:hopf}
	\bx(L)=(|z|^2-|w|^2, 2\re(z \bar{w}), 2 \im(z\bar{w})) .
	\end{equation}
	The right hand side of \eqref{eq:hopf} is the Hopf map \(S^3 \to S^2\) and when applied to the unit vector \((z, w)=(1+|\lambda|^2)^{-1/2}(\lambda, 1)\)
	it gives \eqref{eq:sterproj}.
	
	\((ii)\) The sum \(S = \sum a_i P_{L_i^{\perp}}\) is a constant multiple of the identity if and only if \(\Pi(S)=0\). The statement follows because  \(\Pi\) is linear.
\end{proof}

\begin{corollary}\label{cor:stab}
	If there is a Dunkl inner product \(\inn_{H}\) adapted to \((L_i, a_i)\) then
	\begin{equation}
		|a_j| < \sum_{i \neq j} |a_i| \hspace{2mm} \text{ for all } j .
	\end{equation}
\end{corollary}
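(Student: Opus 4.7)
\emph{Plan.} The strategy is to reduce the hypothesis to the vector-balance condition of Lemma~\ref{lem:sphere}(ii), then apply the triangle inequality in $\R^3$ and rule out its equality case using the fact that $n \geq 3$ distinct lines give distinct points on $S^2$. First I would pick a linear isomorphism $\phi \colon \C^2 \to \C^2$ identifying $\inn_H$ with the standard Hermitian inner product, so that $\<u,v>_H = \<\phi u,\phi v>_{\mathrm{std}}$. A direct check shows that conjugation by $\phi$ sends the $\inn_H$-orthogonal projection $P_i$ onto $L_i^\perp$ to the standard orthogonal projection onto $\phi(L_i)^\perp$. Hence the Dunkl identity \eqref{eq:dunkl} conjugates to $\sum_i a_i P_{\phi(L_i)^\perp} = c \cdot \Id$, which by Lemma~\ref{lem:sphere}(ii) is equivalent to the balance equation
\[
\sum_i a_i \bx_i = 0 \in \R^3, \qquad \bx_i := \bx(\phi(L_i)) \in S^2,
\]
where by Lemma~\ref{lem:sphere}(i) the $\bx_i$ are $n$ \emph{distinct} unit vectors.

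Next, for each fixed index $j$ I would isolate the $j$-th term and apply the triangle inequality in $\R^3$:
\[
|a_j| \;=\; \|a_j \bx_j\| \;=\; \Bigl\| \sum_{i \neq j} a_i \bx_i \Bigr\| \;\leq\; \sum_{i \neq j} |a_i|.
\]
This already gives the non-strict form of the claim; what remains is to rule out equality.

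Finally, I would analyze the equality case. Equality in the triangle inequality above forces every vector $a_i \bx_i$ with $i \neq j$ to be a nonnegative multiple of a single unit vector $u \in S^2$. Since $\|\bx_i\|=1$ and $a_i \neq 0$, this forces $\bx_i = \mathrm{sign}(a_i) \cdot u$, so each such $\bx_i$ lies in the antipodal pair $\{u,-u\}$. Plugging this back into the balance equation then forces $\bx_j \in \{u,-u\}$ as well, so all $n$ distinct points $\bx_1, \ldots, \bx_n$ would lie in a $2$-element set, contradicting $n \geq 3$.

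I expect the main obstacle to be precisely this equality-case analysis, which is where the hypothesis $n\geq 3$ enters in an essential way; the change-of-basis reduction is routine linear algebra but is necessary because Lemma~\ref{lem:sphere} is stated only for the standard Euclidean inner product.
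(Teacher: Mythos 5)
Your proposal is correct and follows essentially the same route as the paper: reduce to the standard inner product, invoke Lemma \ref{lem:sphere}(ii) to get the balance equation, apply the triangle inequality, and rule out equality because three or more distinct points on the unit sphere cannot lie on a single line (equivalently, in an antipodal pair). Your equality-case analysis is just a slightly more explicit spelling-out of the paper's observation that the points \(\bx(L_i)\) would have to be aligned.
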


\begin{proof}
	By a linear change of coordinates we can assume that \(\inn_{H}\) is the standard Euclidean inner product. Equation \eqref{eq:barycenter} together with the triangle inequality imply \(|a_j| \leq \sum_{i \neq j} |a_i|\)
	with equality only when the points \(\bx(L_i)\) are aligned in \(\R^3\). Since \(3\) or more points in the unit sphere can never be aligned the strict inequality follows.
\end{proof}

\subsection{Hyperbolic \(3\)-space.}
The determinant defines a non-degenerate quadratic form of signature \((1,3)\) on the space of \((2\times 2)\)-Hermitian matrices. In linear coordinates given by Equation \eqref{eq:xcoord} 
\[rs - |t|^2 = x_0^2 - x_1^2 -x_2^2 - x_3^2 .\]
The set of unit determinant positive definite Hermitian matrices 
\begin{equation*}
	\h^3
	= \{
	H | \det H = 1, \hspace{1mm} \tr H > 0
	\}
\end{equation*}
equipped with the Riemannian metric induced by the quadratic form \(-\det\) make the hyperboloid model of hyperbolic \(3\)-space. The advantage of this description is that it makes explicit the isometric action of \(PSL(2, \C)\) on \(\h^3\)  which realizes the special isomorphism \(PSL(2, \C) \cong SO^+(1,3)\), as we explain next.

\subsubsection*{\(PSL(2, \C)\)-action.}
We follow \cite[Chaper 2.6]{ThuBook}. We identify Hermitian forms \(\inn_H\) with Hermitian matrices \(H\) by
\begin{equation}\label{eq:Hid}
	\langle v, w \rangle_H  = \langle H v, w \rangle .
\end{equation}
where \(\langle v,w \rangle = v_1 \bar{w}_1 + v_2 \bar{w}_2 \) is the usual Hermitian inner product of \(\C^2\).

Let \(A \in SL(2, \C)\) and let \(\inn_H\) be a positive definite Hermitian inner product, we define \(\inn_{A \cdot H}\)  by \(\langle v, w \rangle_{A \cdot H} = \langle A v, Aw \rangle_H\).
By Equation \eqref{eq:Hid} \(\langle Av, Aw \rangle_{H} = \langle HA v, Aw \rangle\) and the action on Hermitian matrices is 
\begin{equation}\label{eq:action}
H \mapsto A^* H A 
\end{equation}
where \(A^*\) is the conjugate transpose of \(A\).
Since \(\det(A^* H A) = |\det A|^2 \det H\)  the \(PSL(2, \C)\)-action preserves the quadratic form \(\det\) and so it preserves the hyperbolic metric.

\subsubsection*{Poincar\'e ball model.}
This is the unit ball \(\{\|x\| < 1\} \subset \R^3\) equipped with the conformal metric
\[
\frac{4}{(1-\|x\|^2)^2} |dx|^2 .
\]
It is equivalent to \(\h^3\) by stereographic projection from the point \((-1,0,0,0)\) to the hyperplane \(\{x_0=0\}\).

\subsubsection*{Busemann functions.}
Let \(x \in S^2\) be a point in the sphere at infinity of hyperbolic 3-space and let
\(\gamma_{x}\) be a unit speed geodesic ray in \(\h^3\) that converges to \(x\) as \(t \to +\infty\). The Busemann function \(b_x\) is defined as
\[b_x (y) = \lim_{t \to \infty} (d(y, \gamma_{x}(t))-t) . \]
Up to an additive constant, \(b_x\) is independent of the choice of ray. It is smooth, convex and \(1\)-Lipschitz. Its gradient is given by 
\begin{equation}\label{eq:derivative}
\nabla b_x (p)= - \gamma_{p, x}'(0)	
\end{equation}
where \(\gamma_{p, x}\) is the unique unit speed geodesic ray such that \(\gamma_{p, x}(0)=p\) and  \(\lim_{t \to +\infty} \gamma_{p, x}(t) = x\).

In the ball model and with the normalization \(b_x(0)=0\), we have an explicit expression \cite[p. 273]{BH}
\begin{equation}\label{eq:buseman}
b_x(y) = - \log \left(\frac{1-\|y\|^2}{\|x-y\|^2}\right) .	
\end{equation}

\begin{remark}
	Under the identification of \(\h^3\) with \(SL(2, \C)/SU(2)\) given by the \(SL(2, \C)\)-orbit of the identity matrix under the action \eqref{eq:action}, we have
	\[
	b_x (y) =  \log \|A v\|^2
	\]
	where \(A \in SL(2, \C)\) is such that \(y = A^*A \in \h^3\) and \(v\) is a unit vector that generates the complex line \(x \in \CP^1\). These convex functions are widely studied in Geometric Invariant Theory, see \cite{RT}.
\end{remark}

\subsection{Proof of Proposition \ref{prop:Dunkl}.} \label{se:pfdun}
Consider the convex function on \(\h^3\) given by
\begin{equation}
	F = \sum_i a_i \cdot b_{\bx(L_i)} .
\end{equation}
We assume that \(a_i>0\) for all \(i\), otherwise we just multiply all the \(a_i\)'s by \(-1\) (see Remark \ref{rmk:pos}).
\begin{lemma}\label{lem:convex}
	\(F\) is strictly convex.
\end{lemma}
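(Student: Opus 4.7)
The plan is to exploit the fact that while each Busemann function \(b_x\) on \(\h^3\) is convex but only weakly so (it has a one-dimensional null direction for its Hessian, namely along the geodesic pointing to \(x\)), a positively weighted sum over at least three distinct boundary points cannot be degenerate along any geodesic. The only potential obstacle is pinning down exactly which geodesics make a given \(b_x\) fail to be strictly convex.

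First, I would recall or directly compute the Hessian of \(b_x\). Using the ball-model formula \eqref{eq:buseman}, or equivalently the gradient formula \eqref{eq:derivative}, one shows that at each \(p \in \h^3\) the Hessian \(\Hess(b_x)_p\) is the positive semidefinite bilinear form equal to the restriction of the hyperbolic metric to the hyperplane orthogonal to \(\nabla b_x(p) = - \gamma_{p,x}'(0)\); equivalently, horospheres centered at \(x\) are the level sets of \(b_x\) and their second fundamental form with respect to the unit outward normal equals the induced metric. Consequently, for any unit-speed geodesic \(\gamma(t)\) with tangent \(\gamma'(t)\),
\[
\frac{d^2}{dt^2} b_x(\gamma(t)) \;=\; |\gamma'(t)|^2 - \langle \gamma'(t), \nabla b_x(\gamma(t)) \rangle^2 \;\geq\; 0,
\]
with equality at some \(t_0\) if and only if \(\gamma'(t_0)\) is parallel to \(\nabla b_x(\gamma(t_0))\).

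Next, I would observe that this parallelism forces \(\gamma\) to be the geodesic through \(\gamma(t_0)\) aimed at \(x\), since both tangent vectors are unit and \(\nabla b_x\) is tangent to the unique geodesic from \(\gamma(t_0)\) to \(x\). Hence \(b_x \circ \gamma\) is affine precisely when \(x\) is one of the two endpoints at infinity of \(\gamma\), and \(b_x \circ \gamma\) is strictly convex otherwise.

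Finally, I would combine this with the hypotheses. The lines \(L_1, \ldots, L_n\) are distinct with \(n \geq 3\), so by Lemma \ref{lem:sphere}(i) the points \(\bx(L_i) \in S^2\) are \(n \geq 3\) distinct points on the sphere at infinity. Any geodesic in \(\h^3\) has exactly two endpoints at infinity, so at most two of the \(\bx(L_i)\) can coincide with those endpoints, and therefore at least one index \(i_0\) satisfies that \(b_{\bx(L_{i_0})} \circ \gamma\) is strictly convex. Since \(a_{i_0} > 0\) and every other \(a_i b_{\bx(L_i)} \circ \gamma\) is at least convex, the sum \(F \circ \gamma\) is strictly convex. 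As \(\gamma\) was an arbitrary geodesic, \(F\) is strictly convex on \(\h^3\).
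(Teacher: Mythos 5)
Your proof is correct and follows essentially the same route as the paper: the paper's two-line argument likewise rests on the fact that $\Hess(b_x)$ is positive semidefinite with one-dimensional kernel in the direction of $x$, so that with $n\geq 3$ distinct points at infinity the kernels cannot share a common direction. You have simply written out the details (the explicit formula $\Hess\, b_x = g - db_x\otimes db_x$ and the restriction to geodesics), which is a faithful expansion of the same idea.
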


\begin{proof}
	The Hessian of each Busemann function \(b_x\) is non-negative and has \(1\)-dimensional kernel given by the unit vector pointing towards \(x\). Since there are at least \(3\) points at infinity, the Hessian of \(F\) is strictly positive.
\end{proof}

\begin{lemma}\label{lem:Fproperness}
	If Equation \eqref{eq:stability} holds then \(F\) converges uniformly to \(+\infty\) at infinity. In other words, for any \(M>0\) there is a compact subset \(K \subset \h^3\) such that \(F(y) >M\) for all \(y \notin K\).
\end{lemma}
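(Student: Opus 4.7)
I would work in the Poincar\'e ball model with base-point \(o = 0\) and use the explicit formula \eqref{eq:buseman} for the Busemann functions normalized by \(b_x(o)=0\). Setting \(A := \sum_i a_i\) and substituting into the definition of \(F\) gives
\begin{equation*}
F(y) \;=\; \sum_i a_i \log\|\bx(L_i) - y\|^2 \;-\; A \log\bigl(1 - \|y\|^2\bigr).
\end{equation*}
By Remark \ref{rmk:pos} we may assume that all \(a_i > 0\), so \(A > 0\). The lemma amounts to the sublevel sets of \(F\) being compact, which by continuity of \(F\) follows once I show \(F(y_n) \to +\infty\) along any sequence with \(\|y_n\| \to 1\). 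By compactness of the closed Euclidean ball I may pass to a subsequence converging to some \(z \in S^2\), and then there are two cases.

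If \(z \neq \bx(L_i)\) for all \(i\), then each \(\log\|\bx(L_i) - y_n\|^2\) stays bounded while \(-A \log(1-\|y_n\|^2) \to +\infty\), so \(F(y_n) \to +\infty\) immediately. The interesting case is when \(z = \bx(L_j)\) for the (unique) index \(j\), since then \(a_j \log\|\bx(L_j) - y_n\|^2 \to -\infty\) and one must compare rates of blow-up. I would parametrize \(y_n = z + \delta_n u_n\) with \(\delta_n := \|z - y_n\| \to 0\) and \(\|u_n\| = 1\), and set \(c_n := -\langle z, u_n\rangle\); the condition \(\|y_n\| < 1\) forces \(c_n > \delta_n/2\), and a short computation gives \(1 - \|y_n\|^2 = \delta_n(2c_n - \delta_n)\). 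Collecting the divergent contributions this yields
\begin{equation*}
F(y_n) \;=\; \Bigl(a_j - \sum_{i \neq j} a_i\Bigr) \log\delta_n \;-\; A \log(2c_n - \delta_n) \;+\; O(1),
\end{equation*}
where the \(O(1)\) collects the convergent terms \(\sum_{i \neq j} a_i \log\|\bx(L_i) - y_n\|^2\).

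The stability condition \eqref{eq:stability} is precisely the statement that \(a_j - \sum_{i \neq j} a_i < 0\), so the first term in the displayed expression tends to \(+\infty\); the second term is bounded below since \(2c_n - \delta_n \leq 2\). Hence \(F(y_n) \to +\infty\) in this case too. The only genuine obstacle is Case 2: the term \(a_j b_{\bx(L_j)}(y_n)\) itself diverges to \(-\infty\) and has to be dominated, and even ``tangential'' approaches to \(z\) (where \(c_n\) is close to \(\delta_n/2\)) only make the \(-A \log(2c_n - \delta_n)\) summand more positive, so stability is sharp enough without any further refinement.
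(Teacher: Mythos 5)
Your proof is correct, but it takes a genuinely different route from the paper's. The paper argues by contradiction with a soft convexity argument: from a divergent sequence with \(F(y_n)\le M\) it extracts (via geodesics from the origin and convexity of \(F\)) a limiting geodesic ray \(\gamma\) along which \(F\) stays bounded, and then rules this out using either the divergence of each Busemann function away from its centre (when the endpoint of \(\gamma\) is none of the \(\bx(L_i)\)) or the gradient formula \eqref{eq:derivative}, which shows \((F\circ\gamma)'(t)\to -a_j+\sum_{i\ne j}a_i>0\). You instead work entirely with the closed formula \eqref{eq:buseman} in the ball model and do a direct asymptotic expansion near a boundary point \(z\); your coefficient \(a_j-\sum_{i\ne j}a_i\) of \(\log\delta_n\) is exactly the negative of the paper's limiting derivative, so the two computations are consistent. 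Your version is more elementary and quantitative (it gives an explicit logarithmic rate of divergence and handles arbitrary sequences without passing to a limiting ray), at the cost of being tied to the explicit ball-model formula; the paper's argument is softer and would transfer verbatim to barycentre problems in more general nonpositively curved spaces, which is the setting its footnote alludes to. All the individual steps you give check out: the identity \(1-\|y_n\|^2=\delta_n(2c_n-\delta_n)\), the lower bound \(-A\log(2c_n-\delta_n)\ge -A\log 2\) coming from \(2c_n-\delta_n\le 2\), and the boundedness of the terms with \(i\ne j\) are all correct, and the reduction of the uniform divergence statement to sequences with \(\|y_n\|\to 1\) is legitimate by continuity of \(F\) and compactness of the closed ball.
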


\begin{proof}
	Suppose not. Then there is \(M>0\) and a sequence \(y_n \in \h^3\) with \(r_n = d(y_n, 0) \to \infty\) such that \(F(y_n) \leq M\). Take unit speed geodesics with \(\gamma_n(0) =0\) and \(\gamma_n(r_n)=y_n\). Without loss of generality, we can assume that \(F(0)=0\). Since \(F\) is convex along \(\gamma_n\) we must have \(F(\gamma_n(t)) \leq M\) for all \(0 \leq t \leq r_n\).
	After taking a subsequence, the geodesics \(\gamma_n\) converge uniformly on compact subsets to a geodesic ray \(\gamma\) with 
	\begin{equation}\label{eq:bound}
	F(\gamma(t))\leq M	\hspace{2mm} \text{ for all } \hspace{2mm} t>0 .
	\end{equation}
	
	Let \(x \in S^2\) be the limit of \(\gamma(t)\) as \(t \to  \infty\) and consider the following \(2\) cases.  
	\begin{itemize}
		\item[(i)]  If \(x \neq \bx(L_i)\) for all \(i\) then each \(b_{\bx(L_i)} \to +\infty\) close to \(x\).
		\item[(ii)] If \(x= \bx(L_j)\) for some \(j\) then, using Equation \eqref{eq:derivative}, we see that \((F \circ \gamma)'(t)\) converges to \(-a_j + \sum_{i \neq j} a_i > 0\) as \(t \to  \infty\). By convexity \((F \circ \gamma)'(t)> \epsilon\) for some definite \(\epsilon>0\) and all \(t\) sufficiently large.
	\end{itemize}
	In either case (i) or (ii)  \(F(\gamma(t))\) is arbitrary large as \(t \to \infty\) which contradicts the upper bound \eqref{eq:bound}.
\end{proof}

\begin{proof}[Proof of Proposition \ref{prop:Dunkl}]
	If Equation \eqref{eq:stability} holds then by Lemma \ref{lem:Fproperness} the convex function \(F\) has a unique minimum at \(y \in \h^3\). Using the \(PSL(2, \C)\)-action we can set \(y=0\) in the Poincar\'e ball model. Then 
	\begin{equation}
		-\nabla F (0) = \frac{1}{2} \sum_i a_i \cdot \bx(L_i) 
	\end{equation}
	vanishes and existence follows from item \((ii)\) of Lemma \ref{lem:sphere}. Uniqueness is a direct consequence of strict convexity (Lemma \ref{lem:convex}).
\end{proof}

\section{Invariant foliations.}\label{sec:holonomy}
In this section we analyse the holonomy representation of flat connections of the form given by Equations \eqref{eq:std1} and \eqref{eq:std2}. We give numerical criteria that guarantee irreducible holonomy provided that certain non-integer conditions on the residue traces \(a_i\) are satisfied, see Corollaries \ref{cor:irrhol0} and \ref{cor:irrhol}. The Dunkl condition (that requires all the residue matrices to be self-adjoint with respect to some fixed inner product) is not used in the arguments. We work in the more general setting of \emph{standard connections}.

\subsection{Standard connections.}
Before recalling the general definition of standard connection we review the basic underlying example.

\begin{example}\label{ex:2cone}
	Let \(a \in \C\) and consider the connection on \(T\C\) that is given by  
	\begin{equation}\label{eq:2cone}
		\nabla = d - \frac{a}{z} dz 
	\end{equation}
	with respect to the trivialization given by the linear coordinate vector field \(\p_z\).\footnote{Since \(\nabla\) is holomorphic outside the origin and the complex dimension of the base is equal to \(1\), this automatically implies that \eqref{eq:2cone} is torsion free and flat.}
	The holonomy of this connection about a positive simple loop around the origin is equal to scalar multiplication by \(\exp(2\pi i a)\). 
	
	If \(a \in \R\) then \(\nabla\) is the Levi-Civita connection of a flat K\"aher metric \(g=|z|^{-2a}|dz|^2\) on \(\C\) whose behaviour at the origin depends on the values of \(a\) in the following manner. \((i)\) If \(a<1\) then \(g\) is isometric to a \(2\)-cone of total angle \(2\pi (1-a)\) with vertex at \(0\). \((ii)\) If \(a=1\) then \(g\) is isometric to a cylinder \(S^1 \times \R\) with its two ends at \(0\) and \(\infty\). \((iii)\) If \(a>1\) then \(g\) is isometric to a \(2\)-cone of total angle \(2\pi(a-1)\) with its infinite end at \(0\) and vertex at \(\infty\).
\end{example}

Standard connections are analogues of \eqref{eq:2cone} in higher dimensions.
We recall the set-up from the introduction.
Let \(L_1, \ldots, L_n \subset \C^2\) be \(n \geq 3\) distinct complex lines going through the origin with defining linear equations \(L_i=\{\ell_i=0\}\). Let \(\nabla\) be a connection on \(T\C^2\) that on the trivialization by linear coordinate vector fields \(\p_z, \p_w\)  has the form
\begin{equation}\label{eq:standard1}
\nabla = d - \sum_i A_i \frac{d \ell_i}{\ell_i}	
\end{equation}
where \(A_i \in M(2 \times 2, \C)\) are non-zero constant matrices.

\begin{remark}
	Connections of the form \eqref{eq:standard1} are characterized by having simple poles and being invariant under multiplication by scalars \(\lambda \in \C^*\). Indeed, if a connection \(\tn\) has these two properties then it must have constant residues \(A_i\) along a collection of complex lines \(L_i\). The difference \(\tn - \nabla\) where \(\nabla\) is as in Equation \eqref{eq:standard1} is a matrix of holomorphic \(1\)-forms invariant under scalar multiplication. Therefore this difference must vanish and \(\tn = \nabla\).  
\end{remark}

\begin{definition}
	We say that \(\nabla\) is \emph{standard} if 
	\begin{equation}\label{eq:standard2}
	\forall i \hspace{2mm} \ker A_i = L_i \hspace{2mm}  \text{ and } \hspace{2mm} \sum_i A_i = c \cdot \Id \hspace{2mm} \text{ for some } c \in \C.	
	\end{equation}
\end{definition}

\begin{remark}
	By \cite[Poposition 2.3]{CHL} (see also \cite[Proposition 4.10]{Pan}) 
	 \(\forall i\)  \(\ker A_i = L_i\) \(\iff\) \(\nabla\) is torsion free; and \(\sum_i A_i = c \cdot \Id\) \(\iff\) \(\nabla\) is flat.
	Therefore standard connections define \(\C^*\)-invariant affine structures on \(\C^2\) with `logarithmic singularities' along the lines \(L_i\).
\end{remark}

We write \(a_i = \tr A_i\) for the residue traces. 
As we saw in Section \ref{sec:Dunkl}, a Dunkl connection with prescribed residue traces exists and is unique when the conditions of Proposition \ref{prop:Dunkl} are met. As for standard connections we have the following. 

\begin{lemma}
	The set of all standard connections with prescribed residue traces \(a_i \in \C^*\) make an affine complex space of dimension \(n-3\). 
\end{lemma}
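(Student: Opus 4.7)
The plan is to parametrize the admissible matrices $A_i$ and then reduce the global condition $\sum_i A_i = c \cdot \Id$ to a linear system. For each $i$, writing $\ell_i$ for a defining linear form of $L_i$, a matrix $A_i$ with $\ker A_i = L_i$ has the form $A_i(v) = \ell_i(v)\, w_i$ for a unique non-zero $w_i \in \C^2$; the trace is then $\ell_i(w_i)$, so the constraint $\tr A_i = a_i \neq 0$ cuts out a complex $1$-dimensional affine subspace of admissible $w_i$'s, with $w_i \notin L_i$ automatically enforced by $a_i \neq 0$. Hence the set of tuples $(A_1, \ldots, A_n)$ satisfying the per-line conditions alone is an affine complex space of dimension $n$.

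Next I would fix one such basepoint $(A_1^0, \ldots, A_n^0)$. Any other admissible tuple can be written as $A_i = A_i^0 + t_i B_i$ with $t_i \in \C$, where $B_i$ is the (unique up to scalar) non-zero trace-free matrix with $\ker B_i = L_i$. Explicitly, $B_i$ may be taken as $v \mapsto \ell_i(v)\, u_i$ for any non-zero $u_i \in L_i$; it is automatically trace-free because $u_i \in L_i = \ker \ell_i$. Since $\tr(\sum_i A_i) = \sum_i a_i$ is fixed and each $B_i$ is trace-free, the condition $\sum_i A_i = c \cdot \Id$ is equivalent to the linear equation
\[
\sum_{i=1}^{n} t_i B_i = M_0
\]
in the three-dimensional space of trace-free $(2\times 2)$-matrices, where $M_0$ is the trace-free part of $-\sum_i A_i^0$.

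The \emph{main step} is to show that the linear map $\Phi \colon \C^n \to \{\text{trace-free matrices}\}$, $(t_i) \mapsto \sum_i t_i B_i$, is surjective; granting this, the solution set of the displayed equation is a non-empty affine subspace of $\C^n$ of dimension $n-3$, which proves the lemma. Surjectivity reduces to showing that three of the $B_i$'s, say $B_1, B_2, B_3$, are linearly independent. Given a relation $\sum_{j=1}^{3} \lambda_j B_j = 0$, evaluating at a generic $v \in L_1$ annihilates the $B_1$ term and leaves $\lambda_2 \ell_2(v) u_2 + \lambda_3 \ell_3(v) u_3 = 0$. The vectors $u_2, u_3$ are linearly independent because $L_2 \neq L_3$, and $\ell_2(v), \ell_3(v)$ are non-zero for generic $v \in L_1$ (since $L_1 \neq L_2, L_3$), forcing $\lambda_2 = \lambda_3 = 0$; the same argument permuted yields $\lambda_1 = 0$. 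This linear-independence check is the only real obstacle in the argument, and once it is in place the dimension count is immediate.
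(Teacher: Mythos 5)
Your proof is correct, and it reaches the dimension count by a genuinely different (and arguably cleaner) organization than the paper's. The paper first computes the space $W$ of \emph{all} tuples $(A_1,\ldots,A_n)$ with $L_i\subset\ker A_i$ and $\sum_i A_i\in\C\cdot\Id$, showing $\dim_{\C}W=2n-3$ by writing explicit matrices for three normalized lines and row-reducing a $3\times 3$ system, and then observes that the trace map $W\to\C^n$ is surjective, so its fibres have dimension $n-3$. You instead fix the traces from the outset, identify each admissible residue with a rank-one tensor $\ell_i\otimes w_i$ subject to the affine condition $\ell_i(w_i)=a_i$, and reduce the flatness constraint to surjectivity of $(t_i)\mapsto\sum_i t_iB_i$ onto the $3$-dimensional space of trace-free matrices, which you verify by a coordinate-free linear-independence argument for $B_1,B_2,B_3$ (evaluating a purported relation on a vector of $L_1$). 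Your route avoids both the explicit coordinate computation and the final surjectivity-of-the-trace-map step; the paper's route has the mild advantage of exhibiting the larger space $W$ and its coordinates explicitly, which is in the spirit of its later computations. Two cosmetic points: what you actually need for $\ker A_i=L_i$ (rather than $\supsetneq L_i$) is $w_i\neq 0$, which indeed follows from $\ell_i(w_i)=a_i\neq 0$; and in the independence check no genericity is needed --- any non-zero $v\in L_1$ works, since $L_1\neq L_2,L_3$ forces $\ell_2(v),\ell_3(v)\neq 0$.
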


\begin{proof}
	If \(L \subset \C^2\) is a complex line then the vector subspace \(V_{L} \subset \C^4\) of all \((2\times 2)\)-matrices \(A\) such that \(L \subset \ker A\) has complex dimension \(2\) and the trace \(A \mapsto \tr A\) defines a non-zero linear function on \(V_L\). Let \(W\) be the subspace of \(\oplus_i V_{L_i}\) given by all tuples \((A_1, \ldots, A_n)\) such that \(\sum_i A_i\) is a constant multiple of the identity. We claim that \(W\) has complex dimension \(2n-3\).
	
	Without loss of generality we can assume that the first \(3\) lines are spanned by \((0,1), (1,0)\) and \((1,1)\) so we can write
	\[
	A_1 = \begin{pmatrix}
	x_1 & 0 \\
	y_1 & 0
	\end{pmatrix}, \hspace{2mm}
	A_2 = \begin{pmatrix}
	0 & y_2 \\
	0 & x_2
	\end{pmatrix}, \hspace{2mm}
	A_3 = \frac{1}{2}\begin{pmatrix}
	x_3 + y_3 & -x_3 - y_3 \\
	y_3-x_3 & x_3-y_3
	\end{pmatrix}
	\]
	for complex parameters \(x_i, y_i\) where \(x_i = \tr A_i\) for \(i=1, 2, 3\). The condition that \(\sum_{i=1}^n A_i\) is a constant multiple of the identity gives us \(3\) linearly independent equations that can be written as
	\[
	\begin{pmatrix}
	1 & 0 & 1/2 \\
	0 & 1 & -1/2 \\
	0 & 0 & 1
	\end{pmatrix}
	\cdot
	\begin{pmatrix}
	y_1 \\
	y_2 \\
	y_3
	\end{pmatrix}
	=
	\begin{pmatrix}
	v_1 \\
	v_2 \\
	v_3
	\end{pmatrix}
	\]
	where \(v_i\) are linear expressions on the variables \(x_1, x_2, x_3\) and the entries of \(A_i\) for \(i \geq 4\). Therefore the subspace \(W\) is parametrized by \(x_1, x_2, x_3\) together with the \(2(n-3)\) extra parameters accounting for the matrices \(A_i\) for \(i \geq 4\). This proves  the claim that \(\dim_{\C} W = 2n -3\).
	
	Finally, we note that the linear map \(W \to \C^n\) given by
	\[
	(A_1, \ldots, A_n) \mapsto (\tr A_1, \ldots, \tr A_n)
	\]
	is surjective and the statement of the lemma follows from this.
\end{proof}


\noindent\emph{Euler vector field.} A standard connection is invariant under scalar multiplication, this fact is reflected in the existence of an Euler vector field. This is a fundamental object in the study of standard connections and we discuss it next.

Let \(\nabla\) be a standard connection given by Equations \eqref{eq:standard1} and \eqref{eq:standard2} and let
\[
e = z \frac{\p}{\p z} + w \frac{\p}{\p w}
\]
be the Euler vector field of \(\C^2\). Taking traces in the equation \(\sum_i A_i = c \cdot \Id\) we see that
\begin{equation}\label{eq:c}
	c = \frac{1}{2} \sum_i a_i .
\end{equation}
If \(c \neq 1\) then we define the Euler vector field \(E\) of the connection \(\nabla\) as
\begin{equation}
	E = \frac{1}{1-c} \cdot e .
\end{equation}
The reason we multiply \(e\) by the factor \(1/(1-c)\) is given by the following.

\begin{lemma}
	If \(c \neq 1\) then
	\begin{equation}\label{eq:nablaE}
	\nabla E = \Id ,
	\end{equation}
	meaning that \(\nabla_v E = v\) for all tangent vectors \(v\). Otherwise, if \(c=1\) then \(\nabla e = 0\).
\end{lemma}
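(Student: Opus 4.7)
The plan is to compute the covariant derivative $\nabla_v e$ for an arbitrary tangent vector $v \in T_p \C^2$ and show it equals $(1-c) \cdot v$; the lemma then follows by inspection in both cases. Under the coordinate trivialization $\partial_z, \partial_w$, the Euler vector field $e$ is represented by the identity map $p \mapsto p : \C^2 \to \C^2$, so $de(v) = v$, and the problem reduces to analysing the correction term $\sum_i A_i \frac{d\ell_i(v)}{\ell_i(p)} e(p)$ coming from \eqref{eq:standard1}.

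The crux is the elementary observation that each rank-one matrix $A_i$ with $\ker A_i = L_i = \{\ell_i = 0\}$ admits a factorization
\[
A_i \cdot p = \ell_i(p) \cdot u_i
\]
for a uniquely determined vector $u_i \in \C^2$, since the left-hand side depends linearly on $p$ and vanishes precisely where $\ell_i$ does. Substituting $e(p) = p$ into the correction term, the numerators $\ell_i(p)$ cancel the denominators, leaving
\[
\sum_i \frac{d\ell_i(v)}{\ell_i(p)} A_i e(p) = \sum_i \ell_i(v) u_i = \sum_i A_i v = c \cdot v,
\]
where the last equality uses the second half of \eqref{eq:standard2} and the fact that $d\ell_i = \ell_i$ on tangent vectors (since $\ell_i$ is linear).

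Combining these two computations gives $\nabla_v e = v - c \cdot v = (1-c) \cdot v$. If $c \neq 1$, dividing by $1-c$ yields $\nabla E = \Id$ as required; if $c = 1$ we obtain $\nabla e = 0$ directly. There is no substantial obstacle: the proof is a one-line computation once the factorization $A_i p = \ell_i(p) u_i$ is in hand, and this factorization is precisely what causes the apparent meromorphic singularities of $\nabla$ along the lines $L_i$ to cancel when applied to the radial vector field $e$.
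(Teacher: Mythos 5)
Your proof is correct and is essentially identical to the paper's: the paper writes the same factorization as \(A_i = d\ell_i \otimes n_i\) (your \(u_i\) is its \(n_i\)), uses \(A_i(e) = \ell_i n_i\) to cancel the pole, and concludes \(\nabla e = \Id - \sum_i A_i = (1-c)\Id\). Nothing to add.
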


\begin{proof}
	We follow the proof of \cite[Proposition 2.2]{CHL}.
	Since \(\ker A_i = L_i\), we can write \(A_i = d\ell_i \otimes n_i \) where \(n_i\) spans the image of \(A_i\) and therefore \(A_i(e)= d\ell_i (e)  n_i = \ell_i  n_i\). Then
	\begin{equation}\label{eq:nablae}
	\nabla e = d(e)- \sum_i A_i(e) \frac{d\ell_i}{\ell_i} = \Id - \sum_i A_i
	= (1-c) \cdot \Id .	
	\end{equation}
	The statement of the Lemma follows from Equation \eqref{eq:nablae}.
\end{proof}

\begin{remark}\label{rmk:flatherm}
	We take the chance to reproduce here an observation made in \cite[Section 3.2]{CHL} that concerns the case of standard connections for which \(c=1\). It goes as follows.
	
	If \(\nabla\) is a standard connection then it induces a connection on the line bundle of holomorphic volume forms \(\Lambda^2 T^*\C^2\) that on the trivialization given by \(dz \wedge dw\) is given by
	\[
	d + \left( \sum_i a_i \frac{d\ell_i}{\ell_i} \right) .
	\]
	The locally defined section \(s=\left(\prod_i \ell_i^{-a_i} \right)dz \wedge dw\) is parallel and if all \(a_i\) are real then \(s \wedge \bar{s}\) is a globally defined parallel real volume form.
	
	Suppose now that all \(a_i\) are real and that \(\sum_i a_i = 2\). Then the vector field \(e=z\p_z+w\p_w\) is parallel and the contraction of \(s\) with \(e\) is a locally defined (or multivalued) parallel \(1\)-form \(\eta\).
	The product \(h=\eta \otimes \bar{\eta}\) is `uni-valued' and it defines a non-zero Hermitian form that is preserved by \(\nabla\). The Hermitian form \(h\) is degenerate and has kernel spanned by \(e\), indeed it is the pull-back of the flat metric on \(\CP^1\) given by
	\[
	\left(\prod |\xi - p_i|^{-a_i} \right) |d\xi| .
	\]
	This flat metric on \(\CP^1\) has cone angles \(2\pi\alpha_i\) at \(p_i\) where \(\alpha_i=1-a_i\).\footnote{If \(\alpha_i=0\) then the metric has an infinite cylindrical end at \(p_i\) while if \(\alpha_i<0\) then the metric has an infinite cone end of cone angle \(-2\pi\alpha_i\) at \(p_i\), see Example \ref{ex:2cone}.} In particular, we see that in our main Theorem \ref{thm:main} we need to take not only the configuration of lines but also the residue trace \(a\) to be generic as well.
\end{remark}

\subsection{Holonomy representation.}
In this section we give a rough description for the holonomy group of standard connections (Lemma \ref{lem:mon}) and as a consequence we deduce a criterion for irreducible holonomy (Corollary \ref{cor:irrhol0}). 

We begin by describing the fundamental group of the lines complement. The space \(\C^2 \setminus \cup_i L_i\) retracts to the complement of \(n\) Hopf circles in \(S^3\). Since the Hopf fibration restricts to a trivial circle bundle over a punctured sphere and the fundamental group of a sphere with \(n\) punctures is the free group \(F_{n-1}\) on \(n-1\) generators; we can easily see that \(\pi_1(\C^2 \setminus \cup_i L_i)\) is isomorphic to the direct product \(\Z \times F_{n-1}\). We give a convenient set of generators and relations.

Fix \(x_0 \in \C^2 \setminus \cup_i L_i\). Let \(C\) be an affine complex line that goes through \(x_0\) and intersects the lines \(L_i\) at \(n\) different points \(c_i\).  We take \emph{canonical loops} (see \cite[Definition 18.1]{Yak}) \(\gamma_1, \ldots, \gamma_n\) generating the fundamental group of the punctured line \(C \setminus \cup_i \{c_i\}\) where each
\(\gamma_i\) is a loop based at \(x_0\) and contained in \(C\) that winds counter-clockwise around \(c_i\) and the product \(\gamma_1 \ldots \gamma_n\) is homotopic to a simple positive loop that surrounds all the points \(c_i\).
\begin{lemma}\label{lem:fundgroup}
	The fundamental group \(\pi_1(\C^2 \setminus \cup_i L_i)\) is generated the loops \(\gamma_i\) subject to the relations
	\begin{equation}\label{eq:relations}
		[\gamma_i, \gamma_1 \ldots \gamma_n] = 1 \hspace{2mm} \text{ for all } 1 \leq i \leq n .
	\end{equation}
\end{lemma}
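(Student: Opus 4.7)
The plan is to use the fibration
\[
\pi: \C^2\setminus\cup_i L_i \longrightarrow \CP^1 \setminus \{p_1,\ldots,p_n\}, \qquad (z,w)\mapsto [z:w],
\]
where $p_i$ is the point of $\CP^1$ corresponding to $L_i$, together with a linking-number identification of $\gamma_1\cdots\gamma_n$ as a generator of the center. The homotopy exact sequence will then deliver the presentation.

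First I would observe that $\pi$ is a principal $\C^*$-bundle over an open Riemann surface, hence topologically trivial. Its long exact sequence of homotopy groups collapses to a split central extension
\[
1 \to \pi_1(\C^*) \to \pi_1(\C^2\setminus\cup_i L_i) \to \pi_1(\CP^1\setminus\{p_i\}) \to 1,
\]
giving $\pi_1(\C^2\setminus\cup_i L_i)\cong \Z\times F_{n-1}$ with the $\Z$-factor central. The restriction $\pi|_C$ is a biholomorphism of the affine line $C$ onto $\CP^1\setminus\{q\}$, where $q=[C]$; since $C$ meets every $L_i$, the point $q$ is disjoint from $\{p_1,\ldots,p_n\}$. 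Under this biholomorphism each $\gamma_i$ projects to a standard positive loop $\tilde\gamma_i$ around $p_i$, while $\gamma_1\cdots\gamma_n$, which is homotopic in $C\setminus\{c_i\}$ to a large circle, projects to a small loop around $q$; this is nullhomotopic in $\CP^1\setminus\{p_i\}$ because $q$ is an interior point of the base. Hence $\pi_*(\gamma_1\cdots\gamma_n)=1$, so $\gamma_1\cdots\gamma_n$ lies in the central $\Z$.

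The key step is to show that $\gamma_1\cdots\gamma_n$ actually generates this center. For this I would pass to the abelianization: $H_1(\C^2\setminus\cup_i L_i;\Z)\cong\Z^n$ with coordinates given by the linking numbers with the lines $L_j$ (either by Alexander duality on $S^3\setminus\bigsqcup_i (L_i\cap S^3)$, or directly from the $\Z\times F_{n-1}$ description). Each $\gamma_i$ bounds a small disk in $C$ transverse to $L_i$ at $c_i$ and disjoint from the other lines, so its linking vector is the standard basis vector $e_i$, and $\gamma_1\cdots\gamma_n$ represents $(1,\ldots,1)$. On the other hand, a generator of the central $\Z$ is realised by a Hopf-fiber circle in some $L_u\setminus\{0\}$ with $L_u\notin\{L_j\}$, and the disk $\{tv:|t|\leq 1\}\subset L_u$ meets every $L_j$ transversally at the origin, so the linking vector of a Hopf fiber is also $(1,\ldots,1)$. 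Since the central $\Z$ embeds into $H_1$, these two central elements must coincide. This linking identification will be the most delicate ingredient of the proof.

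Finally I would verify the presentation. The relations $[\gamma_i,\gamma_1\cdots\gamma_n]=1$ hold in $\pi_1$ because the product is central, giving a surjection $G\twoheadrightarrow \pi_1$ from the abstractly-presented group $G$. In $G$ the element $\zeta:=\gamma_1\cdots\gamma_n$ is central by construction, the quotient $G/\langle\zeta\rangle=\langle\gamma_1,\ldots,\gamma_n\mid\gamma_1\cdots\gamma_n=1\rangle$ is $F_{n-1}$, and the abelianization $G^{\mathrm{ab}}=\Z^n$ shows that $\zeta$ has infinite order. The resulting central extension $1\to\Z\to G\to F_{n-1}\to 1$ splits because $F_{n-1}$ is free, so $G\cong\Z\times F_{n-1}$. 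The surjection $G\to\pi_1$ induces isomorphisms on both the central $\Z$ (by the previous paragraph) and the $F_{n-1}$-quotient (tautologically), so by the five-lemma it is itself an isomorphism.
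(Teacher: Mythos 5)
Your proof is correct, but it is considerably more self-contained than the paper's, which is an acknowledged sketch: the paper cites a standard reference for the full presentation and only verifies the relations \eqref{eq:relations}, by exhibiting the explicit torus map \((s,t)\mapsto \exp(2\pi i t)\beta(s)\) to show that the loop \(\gamma_0(t)=\exp(2\pi i t)\cdot x_0\) is central, and then asserting \(\gamma_0\simeq\gamma_1\cdots\gamma_n\). Both arguments rest on the same underlying topology (the restriction of the tautological \(\C^*\)-bundle to the punctured sphere, giving \(\Z\times F_{n-1}\)), but you differ in two substantive ways. First, where the paper outsources the completeness of the presentation, you actually prove it, via the abstract identification \(G\cong\Z\times F_{n-1}\) and the five lemma applied to the map of central extensions. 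Second, your identification of \(\gamma_1\cdots\gamma_n\) with the generator of the central \(\Z\) goes through \(H_1\) and linking numbers (comparing the linking vector \((1,\ldots,1)\) of the product of meridians with that of a Hopf fibre, and using that \(\ker\pi_*\) injects into \(H_1\)); this is precisely the step the paper leaves implicit in the unjustified homotopy \(\gamma_0\simeq\gamma_1\cdots\gamma_n\), so your homological argument supplies a rigorous substitute for it. The trade-off is the expected one: the paper's torus trick is shorter and more geometric, while your route gives a complete proof from scratch at the cost of invoking triviality of \(\C^*\)-bundles over open surfaces and Alexander duality.
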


\begin{proof}[Sketch proof.]
	This is standard and proved in {\cite[p. 161]{yoshida}} for example.
	We limit ourselves to explain Equation \eqref{eq:relations}.
	
	Let \(C_0\) be the complex line that goes through the origin and \(x_0\)
	and let \(\gamma_0(t) = \exp(2\pi i t) \cdot x_0\) be the simple positive loop around the origin contained in \(C_0\). We show that \(\gamma_0\) is in the centre of \(\pi_1(\C^2 \setminus \cup_i L_i)\). If \(\beta\) is a loop based at \(x_0\) then we have a map from the torus \(S^1 \times S^1\) to \(\C^2 \setminus \cup_i L_i\) given by \((s, t) \mapsto \exp(2\pi i t) \beta(s)\) that restricts to \(\gamma_0\) on \(\{1\} \times S^1\) and to \(\beta\) on \(S^1 \times \{1\}\). 
	Since the fundamental group of the torus is abelian we conclude that \([\beta, \gamma_0]=1\).
	On the other hand, the loop \(\gamma_0\) is homotopic to the product \(\gamma_1 \ldots \gamma_n\) and therefore the relations \eqref{eq:relations} hold.	
\end{proof}

Fix a point \(x_0\) outside the lines and write \(V=T_{x_0}\C^2\). We consider the holonomy group obtained by parallel transport along loops based at \(x_0\)
\[
\Hol(\nabla) \subset GL(V) .
\]

\begin{lemma}\label{lem:mon}
	Suppose that the residue traces \(a_i=\tr A_i \in \C \setminus \Z\) for all \(i\). Then the holonomy of \(\nabla\) is generated by \(M_1, \ldots, M_n \in GL(V)\) where each \(M_i\) is diagonalizable with eigenvalues \(1\) and \(\exp(2\pi i a_i)\) and their product satisfies
	\begin{equation}\label{eq:monrel}
	M_1 \ldots M_n = \exp(2\pi i c) \cdot \Id  
	\end{equation}
	where \(c\) is given by Equation \eqref{eq:c}.
\end{lemma}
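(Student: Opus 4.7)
The plan is to translate each of the two assertions into a local computation at a codimension one singular stratum of the connection. By Lemma \ref{lem:fundgroup}, the loops $\gamma_1, \ldots, \gamma_n$ generate $\pi_1(\C^2 \setminus \cup_i L_i)$, so $\Hol(\nabla)$ is generated by the parallel-transport operators $M_i$ along $\gamma_i$. The rest of the argument amounts to identifying the conjugacy class of each $M_i$ and then computing the product $M_1 \cdots M_n$.

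For the conjugacy class of $M_i$, I would restrict $\nabla$ to the affine line $C$ used in Lemma \ref{lem:fundgroup}. Choose an affine coordinate $\zeta$ on $C$ with $x_0$ at the origin and $c_i$ at a point which I still call $c_i$. Because $C$ is transverse to each $L_j$, the pullback of $d\ell_i/\ell_i$ has a simple pole with residue $1$ at $\zeta = c_i$ while the pullbacks of $d\ell_j/\ell_j$ for $j \neq i$ are holomorphic there, so
\[
\nabla|_C = d - A_i \frac{d\zeta}{\zeta - c_i} + (\text{holomorphic 1-form near } c_i).
\]
Since $\ker A_i = L_i$ is one-dimensional and $\tr A_i = a_i \notin \Z$, the matrix $A_i$ is diagonalizable with eigenvalues $0$ and $a_i$, and these differ by a non-integer. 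The classical Fuchs theorem for regular singular connections on a disk whose residue has non-resonant eigenvalues then provides a holomorphic gauge transformation near $c_i$ that kills the holomorphic perturbation and brings $\nabla|_C$ into the form $d - A_i\, d\zeta/(\zeta - c_i)$. Its monodromy around $c_i$ is $\exp(2\pi i A_i)$, so $M_i$ is conjugate to it and hence diagonalizable with eigenvalues $1$ and $\exp(2\pi i a_i)$.

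For the product relation I would exploit the $\C^*$-invariance built into Equation \eqref{eq:standard1}. From the proof of Lemma \ref{lem:fundgroup}, $\gamma_1 \cdots \gamma_n$ is homotopic to the loop $\gamma_0(t) = e^{2\pi i t} \cdot x_0$ contained in the complex line $C_0$ through the origin and $x_0$. Parameterizing $C_0$ by $\zeta \mapsto \zeta \cdot x_0$, every pullback $\ell_i|_{C_0}$ equals $\ell_i(x_0)\cdot \zeta$, so $d\ell_i/\ell_i|_{C_0} = d\zeta/\zeta$ and
\[
\nabla|_{C_0} = d - \Bigl(\sum_i A_i\Bigr) \frac{d\zeta}{\zeta} = d - c \cdot \Id\, \frac{d\zeta}{\zeta}.
\]
Solving the diagonal parallel-transport equation $v'(\zeta) = (c/\zeta)\, v(\zeta)$ around the unit circle yields monodromy $e^{2\pi i c} \cdot \Id$, and this equals $M_1 \cdots M_n$ by the homotopy above.

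The main obstacle is the gauge-theoretic step identifying $M_i$ with $\exp(2\pi i A_i)$ up to conjugacy: this is precisely where the hypothesis $a_i \notin \Z$ enters, ensuring that the eigenvalues $\{0, a_i\}$ of the residue are non-resonant so that Fuchs' local normal form is available. Everything else is bookkeeping with the fundamental group and an elementary radial computation.
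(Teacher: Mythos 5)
Your proposal is correct and follows essentially the same route as the paper: restricting to the auxiliary line \(C\) to identify each \(M_i\) via the non-resonant Fuchsian normal form (the paper cites \cite[Theorem 16.16]{Yak} for exactly this step), and restricting to the line \(C_0\) through the origin to reduce the product \(M_1\cdots M_n\) to the Euler system \(d - c\cdot\Id\, d\zeta/\zeta\). No gaps.
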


\begin{proof}
	Take generators of the fundamental group \(\gamma_1, \ldots, \gamma_n\) that lie on a complex line \(C\) that intersects \(\cup_iL_i\) at \(n\) distinct points \(c_i\) as in Lemma \ref{lem:fundgroup} and let \(M_i \in GL(V)\) be the holonomy of \(\nabla\) at \(\gamma_i\).
		
	To compute \(M_i\) we restrict \(\nabla\) to the complex line \(C\). More precisely, we parametrize \(C\) by \(F: t \mapsto x_0 + t \cdot v\) where \(v\) is a tangent direction of \(C\) and we pull-back \(\nabla\) to the complex plane \(\C\) using this parametrization \(F\). Note that \(F^*(d \ell_i / \ell_i) = dt / (t - \xi_i)\) with \(F(\xi_i)=c_i\) and the pull-back \(F^*\nabla\) is the meromorphic connection on \(\C\) given by
	\begin{equation}\label{eq:fuchsian}
		d - \sum_i A_i \frac{dt}{t-\xi_i} .
	\end{equation}
	Close to \(\xi_i\) we can write \eqref{eq:fuchsian} as \(d- \left( A_i dt/(t-\xi_i) + \hol \right)\) where \(\hol\) is a holomorphic term.
	The eigenvalues of the residue matrix \(A_i\) at \(\xi_i\) are \(0\) and \(a_i\). Since \(a_i \notin \Z\), the normal form theorem for non-resonant Fuchsian singularities \cite[Theorem 16.16]{Yak} implies that \(M_i\) is conjugate to
	\[
	\exp \left( 2\pi i
	\begin{pmatrix}
	a_i & 0 \\
	0 & 0
	\end{pmatrix}
	\right)
	=
	\begin{pmatrix}
	\exp(2\pi a_i) & 0 \\
	0 & 1
	\end{pmatrix} .
	\]
	
	To compute the holonomy at \(\gamma_1 \ldots \gamma_n \sim \gamma_0\) we restrict \(\nabla\) to the complex line through the origin \(C_0\) (notation as in the proof of Lemma \ref{lem:fundgroup}) parametrized by \(t \mapsto t x_0\). The pull-back connection on is equal to the Euler system on \(\C\)
	\[
	d - \begin{pmatrix}
	c & 0 \\
	0 & c
	\end{pmatrix}
	\frac{dt}{t} ,
	\]
	where we have used that \(\sum_i A_i = c \cdot \Id\). The holonomy of this Euler system at \(\gamma_0\) is equal to \(\exp(2\pi i c) \cdot \Id\) (as in Example \ref{ex:2cone}) and Equation \eqref{eq:monrel} follows. 
\end{proof}

\begin{corollary}\label{cor:irrhol0}
	Suppose that \(\nabla\) is a standard connection such that the following non-integer conditions on the residue traces hold:
	\begin{enumerate}[label=\upshape(\Roman*)]
		\item\label{noin1} \(a_i \notin \Z\) for all \(i\);
		\item\label{noin2}  \(\sum_{i \in I} a_i - \sum_{i \notin I} a_i \notin 2\Z\) for every \(I \subset \{1, \ldots, n\}\).
	\end{enumerate}
	Then \(\nabla\) has irreducible holonomy.
\end{corollary}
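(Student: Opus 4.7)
The plan is to argue by contradiction, directly applying Lemma \ref{lem:mon}. Assume \(\Hol(\nabla) \subset GL(V)\) is reducible. Since \(\dim_{\C}V=2\), this means there is a common invariant complex line \(L \subset V\) for all the holonomy generators \(M_1, \ldots, M_n\).

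First, I observe that condition \ref{noin1} ensures the two eigenvalues \(1\) and \(e^{2\pi i a_i}\) of \(M_i\) are distinct, so by Lemma \ref{lem:mon} each \(M_i\) is diagonalizable with one-dimensional eigenspaces. Consequently, any \(M_i\)-invariant line is one of these two eigenlines, and therefore \(M_i|_L\) equals either \(1\) or \(e^{2\pi i a_i}\). I then define
\[
I = \{\,i \in \{1,\ldots,n\} \ : \ M_i|_L = e^{2\pi i a_i}\,\},
\]
so that the restriction of the product to \(L\) is \(M_1 \cdots M_n|_L = \exp\!\bigl(2\pi i \sum_{i \in I} a_i\bigr)\).

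Second, I use Equation \eqref{eq:monrel} from Lemma \ref{lem:mon}, which gives \(M_1\cdots M_n = e^{2\pi i c}\,\Id\) with \(c = \tfrac{1}{2}\sum_j a_j\) by Equation \eqref{eq:c}. Restricting this identity to \(L\) and comparing with the expression above, I get
\[
\exp\!\Bigl(2\pi i \sum_{i \in I} a_i\Bigr) = \exp\!\Bigl(\pi i \sum_{j=1}^n a_j\Bigr),
\]
which is equivalent to \(2\sum_{i\in I} a_i - \sum_{j} a_j \in 2\Z\), i.e.
\[
\sum_{i\in I} a_i - \sum_{i \notin I} a_i \in 2\Z.
\]
This directly contradicts hypothesis \ref{noin2}, so the holonomy must be irreducible.

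The proof is essentially immediate once Lemma \ref{lem:mon} is in hand; there is no real obstacle. The only mild point to be careful about is that the two limiting cases \(I = \emptyset\) and \(I = \{1,\dots,n\}\) must be allowed in \ref{noin2} for the argument to cover the case where \(L\) lies in the intersection of the \(1\)-eigenspaces (respectively, the \(e^{2\pi i a_i}\)-eigenspaces) of all the \(M_i\); these two cases just amount to requiring \(c \notin \Z\), which is included in condition \ref{noin2}.
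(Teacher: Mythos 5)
Your proof is correct and follows essentially the same route as the paper: both arguments invoke Lemma \ref{lem:mon}, note that an invariant line must be an eigenline of each \(M_i\) (using condition \ref{noin1}), define \(I\) as the set of indices where the line carries the eigenvalue \(e^{2\pi i a_i}\), and compare the resulting product \(\exp(2\pi i\sum_{i\in I}a_i)\) with \(\exp(2\pi i c)\) to contradict condition \ref{noin2}. No issues.
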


\begin{proof}
	Since \(a_i \notin \Z\) we can apply Lemma \ref{lem:mon} to obtain holonomy endomorphisms \(M_i \in \Hol(\nabla)\) and direct sum decompositions \(T_{x_0}\C^2 = E_{i1} \oplus E_{i2}\) where \(M_i\) acts by \(1\) on \(E_{i1}\) and by
	\(\exp(2\pi i a_i) \neq 1\) on \(E_{i2}\). 
	
	Suppose by contradiction that \(L \subset T_{x_0}\C^2\) is a non-zero subspace invariant by \(\Hol(\nabla)\). Then we must have \(L= E_{i1}\) or \(L = E_{i2}\) for every \(i\). Let \(I\) be the set of indices \(i\) such that \(L = E_{i2}\). If \(v\) is a non-zero vector in \(L\) then
	\begin{equation}\label{eq:irr1}
		M_1 \ldots M_n v = \exp \left(2\pi i \sum_{i \in I} a_i \right) v .		
	\end{equation} 
	On the other hand, Equation \eqref{eq:monrel} gives us
	\begin{equation}\label{eq:irr2}
	M_1 \ldots M_n v = \exp(2\pi i c) \cdot v  .
	\end{equation}
	Equations \eqref{eq:irr1} and \eqref{eq:irr2} imply that \((\sum_{i \in I} a_i) - c \in \Z\). Using that \(c=(1/2)\sum_i a_i\) we see that \(\sum_{i \in I} a_i - \sum_{i \notin I} a_i \in 2\Z\) and Condition \ref{noin2} is violated.
\end{proof}

\subsection{Flat distributions.}
The main result of this section is the next Proposition \ref{prop:flatbundle} and its Corollary \ref{cor:irrhol}. They refine Corollary \ref{cor:irrhol0}, weakening its Condition \ref{noin2} by allowing the possibility that  \(\sum_{i \in I} a_i - \sum_{i \notin I} a_i = 0\). This is important for our application in Section \ref{sec:pf} where we consider the case when all \(a_i\)'s are equal. Our proof uses differential geometry formulas and goes along the lines of \cite[Lemma 3.2]{CHL} (which also considers higher dimensions in the Dunkl setting).

\begin{proposition}\label{prop:flatbundle}
	Let \(\nabla\) be a standard connection of the form given by Equations \eqref{eq:standard1} and \eqref{eq:standard2}.
	Suppose that the residue traces \(\tr A_i=a_i \notin \Z\).
	If \(\nabla\) preserves a rank \(1\) distribution \(\mF \subset T(\C^2 \setminus \cup_i L_i)\)
	then at least one of the following must hold:
	\begin{itemize}
		\item[(i)] there is a subset of indices \(I\) such that \(\sum_{i \in I}a_i - \sum_{i \notin I}a_i\) is a \emph{positive} even integer;
		\item[(ii)] there is some \(j\) such that \(\pm a_j=\sum_{i \neq j}a_i\).
	\end{itemize}
\end{proposition}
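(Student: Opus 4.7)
The plan is to combine the monodromy analysis of Lemma \ref{lem:mon} with Fuchs' relation applied to an auxiliary line subbundle obtained by restricting \(\nabla\) to a generic affine complex line in \(\C^2\).

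First, I would observe that a $\nabla$-invariant rank $1$ distribution \(\mF\) corresponds, via parallel transport, to a line \(\mF_{x_0} \subset T_{x_0}\C^2\) preserved by every monodromy matrix \(M_i\). Since \(a_i \notin \Z\), Lemma \ref{lem:mon} implies each \(M_i\) is diagonalizable with distinct eigenvalues \(1\) and \(\exp(2\pi i a_i)\), so \(\mF_{x_0}\) is an eigenspace of each \(M_i\); let \(I\) be the set of indices for which the eigenvalue is \(\exp(2\pi i a_i)\). Applying the relation \(M_1 \cdots M_n = \exp(2\pi i c)\cdot\Id\) to \(\mF_{x_0}\) forces \(\sum_{i \in I} a_i \equiv c \pmod{\Z}\), so \(\sum_I a_i - \sum_{I^c} a_i \in 2\Z\).

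Next, to pin down the sign of this even integer, I would restrict \(\nabla\) to a generic complex line \(C \subset \C^2\) not passing through the origin. On the projective closure \(\bar{C} \cong \CP^1\) the restriction is a Fuchsian connection on the trivial rank $2$ bundle \(\mathcal{O}^2\), with simple poles at the points \(\xi_i = C \cap L_i\) (residues \(A_i\)) and at \(\infty_C\) with scalar residue \(-c \cdot \Id\), computed via the change of coordinate \(s = 1/t\). The restriction \(\mF|_C\) extends by saturation to a line subbundle \(\bar{\mF}|_C \cong \mathcal{O}(k) \subset \mathcal{O}^2\) on \(\CP^1\); since any saturated line subbundle of a trivial rank $2$ bundle arises as the pullback of the tautological \(\mathcal{O}(-1)\) under the classifying map, \(k \leq 0\). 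By the Fuchsian normal form at each \(\xi_i\) the fiber of \(\bar{\mF}|_C\) is an eigenspace of \(A_i\), and so the induced meromorphic connection on \(\bar{\mF}|_C\) has residue \(a_i\) at \(\xi_i\) precisely when \(i \in I\) (otherwise \(0\)) and residue \(-c\) at \(\infty_C\). Fuchs' relation \(\sum (\text{residues}) = \deg \bar{\mF}|_C\) then yields \(k = \sum_{i \in I} a_i - c\); combined with \(k \leq 0\) this shows that \(\sum_I a_i - \sum_{I^c} a_i = 2(\sum_I a_i - c)\) is a non-positive even integer.

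Two cases remain. If \(\sum_I a_i - \sum_{I^c} a_i < 0\), swapping \(I\) and \(I^c\) produces a positive even integer and conclusion (i) holds. The remaining case \(\sum_I a_i = c\) is the delicate one that I expect to be the main obstacle: here \(k = 0\), so \(\bar{\mF}|_C \cong \mathcal{O}\) is a trivial subbundle of \(\mathcal{O}^2\), which means \(\mF|_C\) points in a constant direction in \(\C^2\). As this holds for every generic line \(C\), the distribution \(\mF\) is globally spanned by a single vector \(v_0 \in \C^2\), and \(\nabla\)-invariance forces \(v_0\) to be a common eigenvector of all the \(A_i\). Since the \(L_i = \ker A_i\) are distinct lines they share no common nonzero vector, so either \(v_0 \in L_{i_0}\) for a unique index \(i_0\) (so that \(A_j v_0 = a_j v_0\) for \(j \neq i_0\), and the identity \(\sum_i A_i = c \Id\) applied to \(v_0\) gives \(a_{i_0} = \sum_{j \neq i_0} a_j\)), or \(v_0 \notin L_i\) for every \(i\) (so that \(A_i v_0 = a_i v_0\) for all \(i\), giving \(\sum a_i = 0\), equivalently \(-a_j = \sum_{i \neq j} a_i\) for every \(j\)). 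Either subcase yields conclusion (ii). The main technical care lies in keeping the sign conventions straight throughout the Fuchs computation and verifying that the saturated extension \(\bar{\mF}|_C\) really has the claimed residue at each pole.
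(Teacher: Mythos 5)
Your argument is correct and arrives at the same two-case dichotomy as the paper, but the central degree computation is done by a genuinely different route. The paper works globally on \(\C^2\): it uses the \(\C^*\)-invariance of \(\mF\) to produce a homogeneous vector field \(X = P\p_z + Q\p_w\) spanning \(\mF\), computes the \(1\)-form \(\eta\) with \(\nabla X = \eta \otimes X\) explicitly as \(-\sum_{i\in I} a_i\, d\ell_i/\ell_i\) (via the logarithmic normal form plus Hartogs), and extracts the degree \(d\) of \(X\) from the torsion-free identity \([e,X]=\nabla_e X - \nabla_X e\) together with \(\nabla_X e = (1-c)X\); the constraint is \(d \geq -1\). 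You instead restrict to a generic projective line and read off the degree \(k\) of the saturated invariant subbundle \(\mathcal{O}(k)\subset\mathcal{O}^2\) from Fuchs' relation; the constraint is \(k\leq 0\). These are the same integer up to normalization, \(k=-(d+1)=\sum_{i\in I}a_i-c\), your sign conventions do check out (residue \(a_i\) at \(\xi_i\) for \(i\in I\), residue \(-c\) at \(\infty_C\), and \(\sum(\text{residues})=\deg\) in the paper's \(\nabla=d-A\,d\ell/\ell\) convention), and your borderline case \(k=0\) is exactly the paper's Lemma \ref{lem:constantX}, handled by the same eigenvector algebra applied to \(\sum_i A_i=c\cdot\Id\). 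Your route avoids the Euler-field identity and the explicit determination of \(\eta\), at the cost of bookkeeping saturated extensions and the residue at \(\infty_C\); the paper's route stays on \(\C^2\) and gets the classifying map \(\Phi:\CP^1\to\CP^1\) of degree \(-k\) for free.

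Two small points to tighten, neither a real gap. In the case \(k=0\) you deduce that \(\mF\) is globally spanned by a single vector from the fact that each generic line gives a constant direction; add that any two such affine lines meet in \(\C^2\setminus\cup_i L_i\) (or can be joined through a third), so the constant directions all agree. And you should note explicitly that the subset \(I\) defined by the monodromy eigenvalues in your first paragraph coincides with the subset defined by which eigenspace of \(A_i\) the extended fibre at \(\xi_i\) lies in; this is precisely the content of the non-resonant normal form (\(a_i\notin\Z\)) that you invoke, and it is what makes the congruence argument and the Fuchs computation refer to the same \(I\).
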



Let's begin the proof of Proposition \ref{prop:flatbundle}.
Assume that \(\nabla\) preserves a rank \(1\) distribution  \(\mF \subset T(\C^2 \setminus \cup_i L_i)\) and that \(a_i \notin \Z\) for all \(i\). Let \(n_i\) be an eigenvector for \(A_i\) with eigenvalue \(a_i\)
\begin{equation}
A_i n_i = a_i \cdot n_i .	
\end{equation}
The residue matrix \(A_i\) has rank \(1\) and its image is equal to \(\C \cdot n_i\).

\begin{lemma}\label{lem:extension}
	\(\mF\) extends as a rank \(1\) distribution of \(T(\C^2 \setminus \{0\})\) with  \(\mF(x) = L_i\) or \(\mF(x) = \C \cdot n_i\) at \(x \in L_i \setminus \{0\}\).
\end{lemma}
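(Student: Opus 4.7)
The strategy is to work locally near a point $p \in L_i \setminus \{0\}$ that does not lie on any other line $L_j$, and prove that $\mathcal{F}$ extends holomorphically across $p$ taking one of the two claimed values.

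Choose holomorphic coordinates $(u, v)$ centred at $p$ with $L_i = \{u = 0\}$, and trivialize $T\C^2$ by the constant frame $e_1, e_2$ where $e_1$ spans $L_i$ and $e_2 = n_i$, so that $A_i = \text{diag}(0, a_i)$ in this frame. In this trivialization $\nabla = d - A_i \frac{du}{u} + \Omega$ where $\Omega$ is a matrix of holomorphic $1$-forms defined near $p$. Since $a_i \notin \Z$, the residue is non-resonant, so the classical normal form theorem for Fuchsian singularities \cite[Theorem 16.16]{Yak}, applied with the transverse coordinate $v$ as a parameter, yields a holomorphic invertible gauge transformation $g(u, v)$ with $g(0, v) = \Id$ that brings $\nabla$ to the model $d - A_i \frac{du}{u}$ in the new frame $\tilde{e}_k = g e_k$.

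In this normal form the multivalued flat sections are $(c_1, c_2 u^{a_i})$ and the monodromy around a small loop about $\{u=0\}$ equals $M = \text{diag}(1, e^{2\pi i a_i})$. Since $\mathcal{F}$ is single-valued and $\nabla$-invariant on a punctured neighbourhood of $p$, each fibre $\mathcal{F}(q)$ is a line fixed by $M$. Because $a_i \notin \Z$ the only fixed lines of $M$ are $\C \tilde{e}_1$ and $\C \tilde{e}_2$; by continuity on the connected punctured neighbourhood, $\mathcal{F}$ coincides with one of these two lines throughout. As $g(0, v) = \Id$, the frame $\tilde{e}_k$ agrees with $e_k$ along $L_i$ in a neighbourhood of $p$, so $\mathcal{F}$ extends holomorphically across $p$ with $\mathcal{F}(p)$ equal to $L_i$ or to $\C n_i$. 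Finally, since $L_i \setminus \{0\}$ is connected and the chosen eigenline is locally constant in $p$, the same option is valid along the entire punctured line.

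The only non-trivial input is the parametric version of the Fuchsian normal form; once that is available, the rest of the argument is driven purely by the distinct-eigenvalue structure of the local monodromy, which is in turn guaranteed by the non-integrality of $a_i$.
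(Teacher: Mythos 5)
Your proof is correct and takes essentially the same approach as the paper: both reduce to a normal form for the logarithmic singularity along \(L_i\) (the paper cites a family version of the Fuchsian normal form directly, you obtain it as the parametric version of the classical statement) and then use that the local monodromy has distinct eigenvalues \(1\) and \(e^{2\pi i a_i}\) to force \(\mF\) to coincide with one of the two eigen-foliations.
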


\begin{proof}
	Take \(x \in L_i \setminus \{0\}\), since \(a_i \notin \Z\), by the normal form theorem for flat logarithmic connections\footnote{This result can be thought of as a family version of the normal form theorem for Fuchsian singularities, applied to transverse discs to the singular set.} \cite[Theorem A.11]{dBP} close to \(x\) there is a frame of holomorphic vector fields \(v_1\) (tangent to \(L_i\)) and \(v_2\) (transverse to \(L_i\)) such that 
	the connection \(\nabla\) in the frame \(v_1, v_2\) is
	\begin{equation}\label{eq:normform}
		\nabla = d - \begin{pmatrix}
		0 & 0 \\
		0 & a_i
		\end{pmatrix}
		\frac{d\ell_i}{\ell_i} .
	\end{equation}
	The vector fields \(v_1, v_2\) give a pair of foliations which decompose \(T \C^2 = \mF_1 \oplus \mF_2\)
	with \(\mF_1 = L_i\) and \(\mF_2 = \C \cdot n_i\) along \(L_i\).
	The holonomy of \(\nabla\) at a loop around \(L_i\) acts by \(1\) on \(\mF_1\) and by \(\exp(2\pi i a_i) \neq 1\) on \(\mF_2\) and  we  must have that either \(\mF = \mF_1\) or \(\mF_2\).	
\end{proof}

\begin{lemma}
	There is a homogeneous holomorphic vector field \(X\) on \(\C^2\) which is
	non-vanishing outside \(0\) and is such that \(\mF(x) = \C \cdot X(x)\) at every \(x \in \C^2 \setminus \{0\}\).
\end{lemma}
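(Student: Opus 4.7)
The plan is as follows. By Lemma \ref{lem:extension}, $\mF$ is a holomorphic rank $1$ subbundle of $T\C^2|_{\C^2 \setminus \{0\}}$. Using the global trivialization of $T\C^2$ by the coordinate frame $\p_z, \p_w$, this corresponds to a holomorphic map $\phi: \C^2 \setminus \{0\} \to \CP^1 = \mathbb{P}(\C^2)$. I would aim to show that $\phi$ factors through the Hopf projection $\pi: \C^2 \setminus \{0\} \to \CP^1$, $x \mapsto [x]$, as $\phi = \tilde\phi \circ \pi$ for some holomorphic $\tilde\phi: \CP^1 \to \CP^1$; writing $\tilde\phi$ in homogeneous coordinates then produces the desired vector field directly.

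The key step is to establish $\C^*$-invariance of $\mF$ under the scalar action $m_\lambda(x) = \lambda x$. The connection form $\omega = \sum_i A_i \, d\ell_i/\ell_i$ satisfies $m_\lambda^* \omega = \omega$, so pulling back a local flat section of $\nabla$ (viewed in the fixed frame $\p_z, \p_w$) by $m_\lambda$ yields another local flat section; consequently $m_\lambda^* \mF$ is another $\nabla$-flat rank $1$ distribution on $\C^2 \setminus \cup_i L_i$. Such distributions are in bijection with lines in $T_{x_0}\C^2$ preserved by the holonomy group $H = \langle M_1, \ldots, M_n \rangle$ described in Lemma \ref{lem:mon}. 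Since $a_i \notin \Z$, each $M_i$ has distinct eigenvalues $1$ and $\exp(2\pi i a_i)$, hence is diagonalizable and non-scalar, and so preserves exactly two lines in $T_{x_0}\C^2$. In particular, $H$ preserves at most $2$ lines. Evaluating at $x_0$, the assignment $\lambda \mapsto m_\lambda^* \mF$ becomes a continuous map from the connected space $\C^*$ into this finite set, hence is constant. Therefore $m_\lambda^* \mF = \mF$ on $\C^2 \setminus \cup_i L_i$, and by the extension of Lemma \ref{lem:extension} this invariance propagates to all of $\C^2 \setminus \{0\}$.

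Having established $\C^*$-invariance of $\mF$ as a subbundle of $T\C^2$ in the frame $\p_z, \p_w$, the condition $\phi(\lambda x) = \phi(x)$ yields a factorization $\phi = \tilde\phi \circ \pi$ with $\tilde\phi: \CP^1 \to \CP^1$ holomorphic. Writing $\tilde\phi$ in projective coordinates as $\tilde\phi([z:w]) = [P(z,w) : Q(z,w)]$ with $P, Q$ coprime homogeneous polynomials of common degree $d \geq 0$, the vector field $X = P \p_z + Q \p_w$ is homogeneous of degree $d$, spans $\mF$ by construction, and is non-vanishing on $\C^2 \setminus \{0\}$ (a common zero of $P$ and $Q$ there would force a non-trivial common factor, contradicting coprimality).

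The main obstacle is the first step, and specifically the finiteness of the set of $\nabla$-flat rank $1$ distributions. This finiteness uses the non-integrality hypothesis $a_i \notin \Z$ crucially: it ensures each $M_i$ is non-scalar, which is what bounds the number of $H$-invariant lines in $\CP(T_{x_0}\C^2)$ by two and thereby activates the connectedness argument.
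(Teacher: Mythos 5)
Your proposal is correct and follows essentially the same route as the paper: $\C^*$-invariance of $\mF$ yields a holomorphic map $\CP^1\to\CP^1$, written as $[P:Q]$ with $P,Q$ coprime homogeneous of equal degree, and $X=P\,\p_z+Q\,\p_w$. The only difference is that the paper simply asserts the $\C^*$-invariance, whereas you justify it via the $m_\lambda$-invariance of the connection form together with the finiteness (at most two, by non-integrality of the $a_i$) of holonomy-invariant lines and connectedness of $\C^*$ --- a worthwhile elaboration of a step the paper leaves implicit.
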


\begin{proof}
	Since \(\mF\) is invariant under the multiplicative action of \(\C^*\) on \(\C^2\) it defines a holomorphic map \(\Phi : \CP^1 \to \CP^1\). Write \(\Phi ([z,w]) = [P(z,w), Q(z,w)] \) where \(P\) and \(Q\) are homogeneous polynomials of the same degree with no common zero outside the origin.  Take \(X=P \p_z + Q \p_w	\).
\end{proof}

Let \(e = z \p_z + w \p_w \) be the Euler vector field of \(\C^2\). Then 
\begin{equation}\label{eq:eX}
[e, X] = dX	
\end{equation}
where \(d\) is the degree of the vector field \(X\), it is an integer \(\geq -1\) and it is equal to the degree of the polynomial components of \(X\) minus \(1\). Our goal is to compute \(d\) in terms of the \(a_i\)'s. In order to do this we use the torsion free formula
\begin{equation}\label{eq:torfree}
[e, X] = \nabla_e X - \nabla_X e .
\end{equation}
It follows from Equation \eqref{eq:nablae} that
\begin{equation}\label{eq:nablaeX}
	\nabla_X e = (1-c) \cdot X .
\end{equation}
We proceed to compute \(\nabla_e X\).

Since  \(\nabla \mF \subset \mF\), it follows that \(\nabla_v X\) is a multiple of \(X\) for every tangent vector \(v\). This way we obtain a holomorphic \(1\)-form \(\eta\) on \(\C^2 \setminus \cup_i L_i\) such that 
\begin{equation}\label{eq:eta}
\nabla_v X = \eta(v) \cdot X 
\end{equation}
for every \(v \in T_x \C^2\) and \(x \in \C^2 \setminus \cup_i L_i\).

\begin{lemma}\label{lem:etahom}
	The contraction of \(\eta\) with the Euler vector field \(e\) is a constant.  
\end{lemma}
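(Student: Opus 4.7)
The plan is to compute $\eta(e)$ directly from the definition of $\eta$ together with the identities already established in the surrounding discussion, and show that the value is a definite complex number independent of the base point.

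First, I would evaluate $\eta$ on the Euler vector field $e$ using its defining property \eqref{eq:eta}:
\[
\eta(e) \cdot X = \nabla_e X.
\]
Next, I would rewrite $\nabla_e X$ using the torsion-free identity \eqref{eq:torfree}, which gives
\[
\nabla_e X = [e, X] + \nabla_X e.
\]
The two terms on the right are already known explicitly: since $X$ is a homogeneous holomorphic vector field of some integer degree $d \geq -1$, equation \eqref{eq:eX} gives $[e,X] = dX$, and equation \eqref{eq:nablaeX} gives $\nabla_X e = (1-c) X$. Combining these three identities I obtain
\[
\eta(e) \cdot X = \bigl( d + 1 - c \bigr) \cdot X.
\]
Since $X$ is non-vanishing on $\C^2 \setminus \{0\}$, I can divide out and conclude that $\eta(e) = d + 1 - c$, which is a fixed complex number determined by the homogeneity degree of $X$ and the scalar $c$ appearing in $\sum_i A_i = c \cdot \Id$; in particular it does not depend on the point at which $\eta$ is evaluated.

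There is essentially no obstacle here: the statement is a direct consequence of the torsion-free equation, the known value of $\nabla_X e$, and the homogeneity of $X$. The only mild subtlety is to remark that although $\eta$ is a priori only defined on the lines complement $\C^2 \setminus \cup_i L_i$, the identity $\eta(e) = d+1-c$ shows that this contraction extends by the constant value $d+1-c$ across the singular lines. The concrete value of $d$ in terms of the residue traces $a_i$ is what the subsequent analysis will pin down in order to conclude the proof of Proposition \ref{prop:flatbundle}.
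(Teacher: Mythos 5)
Your proof is correct and is essentially identical to the paper's: both combine \eqref{eq:eX}, \eqref{eq:torfree}, \eqref{eq:nablaeX} and \eqref{eq:eta} to obtain $\eta(e) = d + 1 - c$, a constant. The paper records this as Equation \eqref{eq:d1} in the form $d = \eta(e) - (1-c)$, which is the same identity rearranged.
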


\begin{proof}
	Using Equations \eqref{eq:eX}, \eqref{eq:torfree}, \eqref{eq:nablaeX} and \eqref{eq:eta} we obtain
	\begin{equation}\label{eq:d1}
		d = \eta(e) - (1-c) 
	\end{equation}
	and the statement follows.
\end{proof}

\begin{lemma}
	Let \(I\) be the subset of \(\{1, \ldots, n\}\) made of the indices \(i\)'s such that \(L_i\) is transverse to \(\mF\). Then
	\begin{equation}\label{eq:eta2}
	\eta =  - \sum_{i \in I} a_i \frac{d \ell_i}{\ell_i}  .
	\end{equation}
\end{lemma}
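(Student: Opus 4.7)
The plan is to match the polar parts of $\eta$ with those of $-\sum_{i\in I} a_i\, d\ell_i/\ell_i$ line by line, producing a holomorphic remainder, and then use scale invariance to kill that remainder.

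First I would use Lemma~\ref{lem:extension} to analyze $A_i X$ along each line $L_i$. If $i \in I$, then $\mF|_{L_i \setminus \{0\}} = \C \cdot n_i$, so $X|_{L_i}$ is a holomorphic multiple of $n_i$; since $A_i n_i = a_i n_i$, this gives $A_i X|_{L_i} = a_i X|_{L_i}$. Hence $A_i X - a_i X$ vanishes on $L_i$ and factors as $A_i X = a_i X + \ell_i R_i$ for some holomorphic vector field $R_i$. If $i \notin I$, then $X|_{L_i}$ takes values in $L_i = \ker A_i$, so $A_i X|_{L_i} = 0$ and $A_i X = \ell_i S_i$ for some holomorphic vector field $S_i$.

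Substituting these decompositions into the defining identity $\eta \otimes X = dX - \sum_i A_i X \, d\ell_i/\ell_i$, all polar contributions combine into $\sum_{i \in I} a_i X\, d\ell_i/\ell_i$, so, setting $\tilde\eta := \eta + \sum_{i \in I} a_i\, d\ell_i/\ell_i$, one obtains
\[
\tilde\eta \otimes X \;=\; dX \;-\; \sum_{i \in I} R_i\, d\ell_i \;-\; \sum_{i \notin I} S_i\, d\ell_i,
\]
which is a holomorphic vector-valued $1$-form on all of $\C^2$. Since $X$ is non-vanishing on $\C^2 \setminus \{0\}$, dividing by $X$ produces a holomorphic $1$-form $\tilde\eta$ on $\C^2 \setminus \{0\}$, and Hartogs extends it across the origin.

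To conclude, I would observe that $\tilde\eta$ is invariant under the $\C^*$-action of scalar multiplication: each $d\ell_i/\ell_i$ is manifestly so, and invariance of $\eta$ follows from the scale invariance of $\nabla$ together with the homogeneity of $X$ (the common pullback weight of $X$ cancels on both sides of $\nabla_v X = \eta(v) X$). Writing $\tilde\eta = f\,dz + g\,dw$, scale invariance forces $f$ and $g$ to be holomorphic functions homogeneous of degree $-1$, hence identically zero. Therefore $\tilde\eta = 0$ and $\eta = -\sum_{i \in I} a_i\, d\ell_i/\ell_i$.

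The main obstacle is the pole cancellation in the second step; Lemma~\ref{lem:extension} reduces it to a local check at each line, but the split between the transverse and tangent cases is precisely where the shape of the formula emerges. The final scale-invariance argument is what upgrades the identity from ``holds modulo a holomorphic $1$-form'' to an exact equality.
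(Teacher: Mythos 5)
Your proof is correct, and while it shares the paper's overall skeleton (identify the polar part of \(\eta\) along each line, extend by Hartogs, then kill the holomorphic remainder), both halves are executed differently. For the polar part, the paper reads off the local form of \(\eta\) directly from the normal form \eqref{eq:normform}, whereas you decompose \(A_iX\) modulo \(\ell_i\) using the eigenvector structure of the residues; these are equivalent, and your version has the minor virtue of making the coefficient \(a_i\) appear from the eigenvalue equation \(A_in_i=a_in_i\) rather than from the normal form. The real divergence is in the last step: the paper uses flatness of \(\nabla\) to get \(d\eta=0\), writes the holomorphic remainder as \(du\), and then invokes Lemma \ref{lem:etahom} (constancy of \(\eta(e)\)) plus the vanishing of \(e\) at the origin to conclude \(u\) is constant. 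You instead observe that \(\tilde\eta\) is \(\C^*\)-invariant --- which is correct, since \(m_\lambda^*\Omega=\Omega\) and the homogeneity weight \(\lambda^{d+1}\) of \(X\) cancels on both sides of \(\nabla_vX=\eta(v)X\) --- so its coefficients are holomorphic and homogeneous of degree \(-1\), hence zero. This bypasses both the closedness of \(\eta\) and Lemma \ref{lem:etahom} entirely and is, if anything, more elementary; the paper's route has the advantage of reusing Lemma \ref{lem:etahom}, which is needed anyway for Equation \eqref{eq:d1} in the proof of Proposition \ref{prop:flatbundle}. Two small points you gloss over but which are routine: the divisibility of \(A_iX-a_iX\) (resp.\ \(A_iX\)) by \(\ell_i\) holds component-wise in the polynomial ring since these are polynomial vector fields vanishing on \(\{\ell_i=0\}\); and to divide by \(X\) along \(L_i\setminus\{0\}\) one should note that the proportionality \(\Theta(v)\wedge X=0\) extends to the lines by continuity from the dense open set where \(\eta\) is defined.
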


\begin{proof}
	Take \(x \in L_i \setminus \{0\}\). Equation \eqref{eq:normform} implies that close to \(x\), up to the addition of a holomorphic \(1\)-form, \(\eta\) is equal to \(-a_i d\ell_i / \ell_i\) if \(\mF\) is transverse to \(L_i\) and \(\eta\) is holomorphic if \(\mF\) is tangent to \(L_i\). By Hartogs, we can write
	\(\eta = \eta_{\hol} - \sum_{i \in I} a_i d \ell_i /  \ell_i \)
	where \(\eta_{\hol}\) is holomorphic on the whole of \(\C^2\). Since \(\nabla\) is flat \(\eta\) is closed, hence so is \(\eta_{\hol}\) and therefore we can write \(\eta_{\hol} = du\) with \(u\) a holomorphic function on \(\C^2\). 
	
	On the other hand, by Lemma \ref{lem:etahom} the derivative of \(u\) along the Euler field  \(du(e)\) is equal to a constant \(C\). Since \(e\) vanishes at \(0\), evaluating \(du(e)\) at the origin shows that \(C=0\).  Hence \(u\) is constant and \(du =\eta_{\hol}=0\).
\end{proof}

\begin{lemma}\label{lem:constantX}
	If the non-zero vector field \(X\) is constant then there is some \(j\) such that \(\pm a_j = \sum_{i \neq j} a_i\).
\end{lemma}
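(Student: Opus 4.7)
The plan is to combine the degree formula \eqref{eq:d1} with the explicit expression \eqref{eq:eta2} to force the index set $I$ (of lines transverse to $\mF$) to be either all of $\{1,\dots,n\}$ or to have a single-element complement, and then read off the conclusion in either case.

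First I would determine the degree $d$ of $X$. Since $X = v$ is a non-zero \emph{constant} vector field, its polynomial components have degree zero, so by the definition following \eqref{eq:eX} one has $d = -1$; alternatively this follows at once from the direct computation $[e,v] = -v$.

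Next I would evaluate the holomorphic $1$-form $\eta$ on the Euler vector field $e$ in two different ways. On the one hand, feeding $d = -1$ into \eqref{eq:d1} yields $\eta(e) = -c$. On the other hand, since $d\ell_i(e) = \ell_i$ for any linear defining form $\ell_i$, the explicit formula \eqref{eq:eta2} gives $\eta(e) = -\sum_{i\in I} a_i$. Combining these with $c = \frac{1}{2}\sum_i a_i$ from \eqref{eq:c} produces the symmetry relation
\[
\sum_{i\in I} a_i \;=\; \sum_{i \notin I} a_i .
\]

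The final step is a short geometric observation about the complement of $I$. The foliation $\mF = \C \cdot v$ consists of parallel lines of direction $v$, so by Lemma \ref{lem:extension} a line $L_i$ is tangent to $\mF$ along $L_i \setminus \{0\}$ if and only if $L_i = \C\cdot v$; since the $L_i$ are distinct, this holds for at most one index. If $I^c = \emptyset$ then the displayed equation forces $\sum_i a_i = 0$, whence $-a_j = \sum_{i \neq j} a_i$ for every $j$; if $I^c = \{j\}$ for a unique $j$, then the same equation reads $a_j = \sum_{i \neq j} a_i$. Either way, the required condition $\pm a_j = \sum_{i \neq j} a_i$ holds.

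I do not anticipate a serious technical obstacle here: the proof is essentially a bookkeeping of the two pieces of information that have already been assembled in the preceding lemmas, together with the trivial remark that a fixed direction $\C \cdot v \subset \C^2$ can coincide with at most one of the prescribed lines $L_i$.
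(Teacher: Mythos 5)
Your proof is correct, but it takes a genuinely different route from the paper's. You derive the identity $\sum_{i\in I}a_i=\sum_{i\notin I}a_i$ by feeding $d=-1$ into the degree formula \eqref{eq:d1} and pairing it with $\eta(e)=-\sum_{i\in I}a_i$ from \eqref{eq:eta2} (this is exactly the specialization of \eqref{eq:d2} to the constant case), and then you close the argument with the observation that the tangency set $I^c$ has at most one element because the constant direction $\C\cdot v$ can coincide with at most one of the distinct lines $L_i$. The paper instead gives a self-contained linear-algebra argument that never touches $\eta$ or the degree: by Lemma \ref{lem:extension}, for each $i$ either $\C\cdot X=L_i$ or $\C\cdot X=\C\cdot n_i$; if the latter holds for all $i$, the image of $\sum_i A_i=c\cdot\Id$ lies in the line $\C\cdot X$, forcing $c=0$ and hence $-a_j=\sum_{i\neq j}a_i$ for every $j$; if $\C\cdot X=L_j$ for the (necessarily unique) $j$, then applying $\sum_i A_i=c\cdot\Id$ to $X$ gives $cX=(\sum_{i\neq j}a_i)X$ and hence $a_j=\sum_{i\neq j}a_i$. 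The two case splits are in fact the same dichotomy ($I^c=\emptyset$ versus $I^c=\{j\}$) seen through different lenses. Your version is the more economical continuation of the machinery already assembled for Proposition \ref{prop:flatbundle}, essentially making the lemma a one-line corollary of \eqref{eq:d2}; the paper's version is more elementary and independent of the $1$-form $\eta$, which is presumably why it is stated as a standalone lemma. Both are complete.
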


\begin{proof}
	By Lemma \ref{lem:extension} we must have \(\C \cdot X = L_i\) or \(\C \cdot X = \C \cdot n_i\) for every \(i\).\footnote{In the Dunkl case \(\C \cdot n_i=L_i^{\perp}\) and such constant invariant distributions do not exists as the number of lines is \(\geq 3\).} We distinguish two cases.
	
	Case \(1\): \(\C \cdot X = \C \cdot n_i\) for every \(i\). Then the image of \(\sum_i A_i\) is contained in \(\C \cdot X\). Since \(\sum_i A_i = c \cdot \Id\) we must have \(c=(1/2)\sum_i a_i=0\) in this case. Therefore \(\sum_i a_i = 0\) and \(-a_j = \sum_{i \neq j} a_i\) for every \(j\).
	
	Case \(2\): \(X \in L_j\) for some \(j\). Since \(X\) is non-zero we can't have \(X \in L_i\) for \(i\neq j\), therefore \(X\) belongs to the \(a_i\)-eigenspace \(\C \cdot n_i\) of \(A_i\) for every \(i \neq j\). This implies that
	\[
	c \cdot X = \left(\sum_i A_i\right) X = \left( \sum_{i \neq j} a_i \right) X .
	\]
	It follows that \(c= \sum_{i \neq j}a_i\). Using that \(c=(1/2)\sum_i a_i\) we conclude that \(a_j = \sum_{i \neq j} a_i\).
\end{proof}

\begin{proof}[Proof of Proposition \ref{prop:flatbundle}]
	Equation \eqref{eq:eta2} implies that \(\eta(e)= - \sum_{i \in I} a_i\). Plugging this into Equation \eqref{eq:d1} and using \(c=(1/2)\sum_i a_i\), we conclude that the degree \(d\) of \(X\) is equal to
	\begin{equation}\label{eq:d2}
	d = - \sum_{i \in I} a_i + c -1 = \frac{1}{2} \left(\sum_{i \notin I} a_i - \sum_{i \in I} a_i \right) -1.	
	\end{equation}
	If \(d=-1\) then \(X\) is a non-zero constant vector field and Lemma \ref{lem:constantX} shows that item \((ii)\) holds. If \(d \geq 0\) then Equation \eqref{eq:d2} implies that \(\sum_{i \notin I} a_i - \sum_{i \in I} a_i\) is a positive even integer number and therefore item \((i)\) is satisfied.
\end{proof}

\begin{corollary}\label{cor:irrhol}
	Let \(\nabla\) be a standard connection with non-integer residue traces \(a_i \in \C \setminus \Z\) such that \(\sum_i a_i \neq 0\) and \(a_j \neq \sum_{i \neq
	 j} a_i\) for every \(j\). 
 \begin{itemize}
 	\item[(i)] If \(\nabla\) preserves a rank \(1\) distribution \(\mF\) then \(\sum_{i \in I} a_i - \sum_{i \notin I} a_i\) is a positive even integer, where \(I\) is the subset of \(i\)'s such that \(\mF\) is tangent to \(L_i\).
 	\item[(ii)] If there is no subset of indices \(I\) such that  \(\sum_{i \in I} a_i - \sum_{i \notin I} a_i\) is a positive even integer then the holonomy of \(\nabla\) is irreducible.
 \end{itemize}

\end{corollary}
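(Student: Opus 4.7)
My plan is to derive both statements of the corollary as direct consequences of Proposition \ref{prop:flatbundle}, by using the two additional hypotheses to rule out case (ii) of that proposition.

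First, observe that case (ii) of Proposition \ref{prop:flatbundle} asserts \(\pm a_j = \sum_{i \neq j} a_i\) for some \(j\). The minus-sign alternative is equivalent to \(\sum_i a_i = 0\), which is ruled out by our assumption \(\sum_i a_i \neq 0\); the plus-sign alternative is ruled out by the assumption \(a_j \neq \sum_{i \neq j} a_i\) for every \(j\). Hence, under the hypotheses of the corollary, if \(\nabla\) preserves a rank \(1\) distribution \(\mF\), then case (i) of Proposition \ref{prop:flatbundle} must hold: some subset \(I \subset \{1,\ldots,n\}\) satisfies \(\sum_{i \in I} a_i - \sum_{i \notin I} a_i \in 2\Z_{>0}\).

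To pin down this subset as the \emph{tangent} set, I will re-trace the proof of Proposition \ref{prop:flatbundle} rather than appeal only to its statement. Lemma \ref{lem:extension} extends \(\mF\) to \(\C^2 \setminus \{0\}\) and forces \(\mF(x)\) at \(x \in L_i \setminus \{0\}\) to equal either \(L_i\) (tangent) or \(\C \cdot n_i\) (transverse, since \(n_i\) spans \(\mathrm{im}(A_i)\) and is independent from \(\ker A_i = L_i\)). Let \(T\) be the transverse set; the proof of Proposition \ref{prop:flatbundle} establishes \(\eta = -\sum_{i \in T} a_i\, d\ell_i / \ell_i\) and, in Equation \eqref{eq:d2}, \(d = \tfrac{1}{2}\bigl(\sum_{i \notin T} a_i - \sum_{i \in T} a_i\bigr) - 1\), with \(d \geq 0\) (the case \(d=-1\) being excluded by Lemma \ref{lem:constantX} and our hypotheses). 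Therefore the complement \(I = \{1,\ldots,n\} \setminus T\), which is exactly the tangent set, satisfies \(\sum_{i \in I} a_i - \sum_{i \notin I} a_i = 2(d+1) \in 2\Z_{>0}\). This yields part (i).

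Part (ii) follows by contrapositive. A holonomy-invariant line \(L \subset T_{x_0}\C^2\) produces, by parallel transport along paths in \(\C^2 \setminus \cup_i L_i\), a flat rank \(1\) distribution \(\mF\) on that open set; conversely, any \(\nabla\)-preserved rank \(1\) distribution gives such an invariant line in the fiber. If \(\Hol(\nabla)\) were reducible, part (i) would produce a subset \(I\) (the tangent set of the resulting \(\mF\)) violating the hypothesis of (ii). Hence \(\Hol(\nabla)\) is irreducible. There is no real obstacle in this argument; the only care required is the bookkeeping in matching the set \(I\) appearing generically in Proposition \ref{prop:flatbundle} with the tangent set in the corollary, which is why I plan to revisit the proof of the proposition rather than quote its statement verbatim.
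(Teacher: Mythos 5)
Your proposal is correct and is exactly the intended derivation: the paper states Corollary \ref{cor:irrhol} without proof as an immediate consequence of Proposition \ref{prop:flatbundle}, whose alternative (ii) is ruled out by the hypotheses \(\sum_i a_i \neq 0\) and \(a_j \neq \sum_{i\neq j} a_i\), leaving alternative (i). Your extra care in re-tracing the proof of the proposition to identify \(I\) as the tangent set (the complement of the transverse set appearing in Equation \eqref{eq:d2}), and the standard passage between holonomy-invariant lines and flat rank \(1\) distributions for part (ii), match the paper's argument.
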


\section{Flat Hermitian forms for the \(B_2\)-arrangement.}\label{sec:Dunkunitary}
The main result of this section is Proposition \ref{prop:dihedral}.
It identifies the values of \(a \in \R\) for which the Dunkl connection \(\nabla_a\) with equal residue traces \(\tr A_i = a\) and simple poles at the dihedral \(B_2\)-arrangement is unitary.

The \(4\) lines of the \(B_2\)-arrangement correspond to the symmetric configuration of points \(0, \infty, 1, -1 \in \CP^1\). We use the branched covering
\((z, w) \mapsto (z^2, w^2)\) (which induces the map \(\xi \mapsto \xi^2\) on the Riemann sphere) to write \(\nabla_a\) as a pull-back of a connection with simple poles at \(3\) lines. The existence of flat Hermitian forms for standard connections with simple poles at \(3\) lines is well-known and in Sections \ref{sec:2matrices} and \ref{sec:3linesHF} we present a self-contained account of it.

\subsection{Invariant Hermitian forms for a pair of matrices.}\label{sec:2matrices}
In this section we review the existence of Hermitian forms invariant under certain subgroups of \(GL(2, \C)\) generated by \(2\) elements. The main result is Lemma \ref{lem:hermform}. This is
classical in the context of Gauss' hypergeometric equation, see \cite{Beu}.
 
\begin{lemma}\label{lem:circle}
	Let \(R, S \in GL(2, \C)\) be diagonalizable matrices with unit eigenvalues \(r_1, r_2 \in S^1\) and \(s_1, s_2 \in S^1\). We assume that these eigenvalues are all different, that is \(r_1 \neq r_2, s_1 \neq s_2\) and \(\{r_1, r_2\} \cap \{s_1, s_2\} = \emptyset\). Moreover, suppose that \(1\) is an eigenvalue of \(RS^{-1}\). 
	
	Then the corresponding eigenspaces
	\[
	E_{r_1}, E_{r_2}, E_{s_1}, E_{s_2} \in \CP^1
	\]
	are all distinct and lie on a circle in \(\CP^1\).
\end{lemma}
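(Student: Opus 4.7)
My plan is to compute the cross-ratio of the four eigenspaces in coordinates and to show that the unit-modulus hypothesis on the eigenvalues forces it to be real, which by the standard characterisation is equivalent to concyclicity in $\CP^1$.

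First I would choose a basis diagonalising $R$, so that $R = \operatorname{diag}(r_1, r_2)$ and the first two eigenspaces become $E_{r_1} = [1:0]$, $E_{r_2} = [0:1]$. Let $P = \begin{pmatrix} a & b \\ c & d\end{pmatrix}$ be the matrix whose columns are eigenvectors of $S$, so that $S = P\operatorname{diag}(s_1, s_2) P^{-1}$, $E_{s_1} = [a:c]$, $E_{s_2} = [b:d]$, and the cross-ratio of the four points equals $bc/ad$. Then I would translate the hypothesis that $1$ is an eigenvalue of $RS^{-1}$ as $\det(R - S) = 0$. Computing $R - S$ directly and expanding the determinant should give the factorisation
\[
\det(R - S) = \frac{ad\,(r_1 - s_1)(r_2 - s_2) \;-\; bc\,(r_1 - s_2)(r_2 - s_1)}{ad - bc},
\]
so that, because $P$ is invertible, the vanishing condition reads
\[
\frac{bc}{ad} = \frac{(r_1 - s_1)(r_2 - s_2)}{(r_1 - s_2)(r_2 - s_1)}.
\]

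The last step is to verify that the right-hand side is real. Using $\bar z = z^{-1}$ for $z \in S^1$ and the identity $\overline{r_i - s_j} = -(r_i - s_j)/(r_i s_j)$, the product $r_1 r_2 s_1 s_2$ appears symmetrically in the conjugated numerator and denominator and cancels, so the ratio equals its own complex conjugate. Hence the cross-ratio is real and the four eigenspaces lie on a common circle. For distinctness, the disjointness hypothesis makes every factor $(r_i - s_j)$ nonzero, so the displayed identity forces $ad$ and $bc$ to both be nonzero; none of $a, b, c, d$ can vanish, and therefore $\{E_{s_1}, E_{s_2}\} \cap \{E_{r_1}, E_{r_2}\} = \emptyset$.

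I expect the main obstacle to be the factorisation of $\det(R - S)$: after substituting $S = P \operatorname{diag}(s_1, s_2) P^{-1}$ and expanding, one has to collect the resulting quadratic expression in $ad$ and $bc$ and recognise it as $(ad - bc)$ times a bilinear form. The key algebraic step is the identity
\[
(r_1 - s_1)(r_2 - s_1) + (r_1 - s_2)(r_2 - s_2) - (s_1 - s_2)^2 = (r_1 - s_1)(r_2 - s_2) + (r_1 - s_2)(r_2 - s_1);
\]
once this is in hand, the rest of the argument is bookkeeping.
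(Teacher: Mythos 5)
Your proof is correct, and it takes a genuinely different route from the paper. The paper's argument finds a vector \(v\) with \(Rv=Sv\) (coming from the eigenvalue \(1\) of \(RS^{-1}\)), observes that \(\{v,Rv\}\) is a basis because \(R\) and \(S\) share no eigenvalue, and so puts both matrices simultaneously into companion form; the four eigenvectors are then read off explicitly as \((-r_2,1),(-r_1,1),(-s_2,1),(-s_1,1)\), which visibly lie on the unit circle of the affine chart. Your argument instead diagonalises \(R\), expresses concyclicity via reality of the cross-ratio, and converts the hypothesis \(\det(R-S)=0\) into the identity \(bc/ad=(r_1-s_1)(r_2-s_2)/\bigl((r_1-s_2)(r_2-s_1)\bigr)\), whose reality follows from \(\bar z=z^{-1}\) on \(S^1\). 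I checked the factorisation of \(\det(R-S)\): writing \(u=ad\), \(v=bc\), \(A=(r_1-s_1)(r_2-s_2)\), \(B=(r_1-s_2)(r_2-s_1)\), the numerator collapses to \(u^2A+v^2B-uv(A+B)=(u-v)(uA-vB)\) exactly via your stated identity, so the computation goes through; your distinctness argument (both \(ad\) and \(bc\) must be nonzero since \(A,B\neq0\) and \(ad-bc\neq0\)) is also sound. What each approach buys: yours yields an explicit formula for the cross-ratio of the four eigenlines purely in terms of the eigenvalues, which is a pleasant invariant statement and makes the role of the unit-modulus hypothesis transparent; the paper's simultaneous companion form is shorter and, more importantly, is reused downstream — the explicit eigenvectors \eqref{eq:moneigen} feed directly into the signature computation of Lemma~\ref{lem:hermform}, where the interlacing of \(\{-r_1,-r_2\}\) and \(\{-s_1,-s_2\}\) on the unit circle is what decides definiteness. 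If you adopted your proof you would still need to produce such a normal form (or an equivalent interlacing criterion) for that later step.
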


\begin{remark}[Geometric interpretation]
	The matrices \(R, S\) act on hyperbolic \(3\)-space \(\h^3\) as elliptic isometries with axes of rotation \(\ell_R\) and \(\ell_S\). The condition that \(RS^{-1}\) has eigenvalue \(1\) implies that \(RS^{-1}\) is either elliptic (in case is diagonalizable) or parabolic (if it is not). Lemma \ref{lem:circle} asserts that the axes of rotation \(\ell_R\) and \(\ell_S\) lie on a hyperbolic plane \(\h^2 \subset \h^3\), see also \cite[Lemma 3.4.1]{KapMond}.
\end{remark}

\begin{proof}[Proof of Lemma \ref{lem:circle}]
	By hypothesis there is a non-zero vector \(\tilde{v} \in \C^2\) such that
	\(RS^{-1} \tilde{v} = \tilde{v}\). If we take \(v = S^{-1} \tilde{v}\) then
	\[
	Rv = RS^{-1} \tilde{v} = \tilde{v} = S v .
	\]  
	Note that \(w:= Rv = Sv\) is not a multiple of \(v\), otherwise the endomorphisms \(R, S\) would share an eigenvalue. We conclude that \(\{v, w\}\) form a basis of \(\C^2\) and so the matrices \(R, S\) are simultaneously conjugate to
	\begin{equation}\label{eq:monmat}
		\begin{pmatrix}
		0 & -r_1 r_2 \\
		1 & r_1 + r_2
		\end{pmatrix},
		\hspace{2mm}
		\begin{pmatrix}
		0 & -s_1 s_2 \\
		1 & s_1 + s_2
		\end{pmatrix}
	\end{equation}
	which have eigenvectors
	\begin{equation}\label{eq:moneigen}
		\begin{pmatrix}
		-r_2 \\
		1
		\end{pmatrix},
		\hspace{2mm}
		\begin{pmatrix}
		-r_1 \\
		1
		\end{pmatrix},
		\hspace{2mm}
		\begin{pmatrix}
		-s_2 \\
		1
		\end{pmatrix},
		\hspace{2mm}
		\begin{pmatrix}
		-s_1 \\
		1
		\end{pmatrix} . \qedhere
	\end{equation}
\end{proof}

Let \(\R^{1,3}\) be the \(4\)-dimensional real vector space of Hermitian matrices \(H\) with linear coordinates \((x_0, x_1, x_2, x_3)\) given by
\[
H =
\begin{pmatrix}
x_0 - x_1 & -x_2 - i x_3 \\
-x_2 + i x_3 & x_0 + x_1
\end{pmatrix} .
\]
Recall that Hermitian matrices \(H\) correspond to Hermitian forms via \(\langle v, w \rangle_H = \langle H v , w \rangle\)
where \(\inn\) is the usual inner product of \(\C^2\). Given \(R \in GL(2, \C)\) it acts on Hermitian forms by \(R \cdot \inn_{H} = \langle R \cdot, R \cdot \rangle_H\) and the corresponding action on Hermitian matrices is \(H \mapsto R^* H R\).

\begin{lemma}\label{lem:fixedH}
	Let \(R \in GL(2, \C)\) have unit eigenvalues \(r_1, r_2 \in S^1\) with \(r_1 \neq r_2\) and corresponding eigenspaces \(E_1, E_2\). Then the space of Hermitian forms invariant by \(R\) make a \(2\)-dimensional real subspace of \(\R^{1,3}\) spanned by the orthogonal projections to \(E_1^{\perp}\) and \(E_2^{\perp}\).
\end{lemma}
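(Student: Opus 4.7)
The plan is to show (a) that the two rank-one Hermitian matrices $P_{E_1^\perp}$ and $P_{E_2^\perp}$ are both fixed by the action $H \mapsto R^* H R$ and are linearly independent, and (b) that the fixed subspace has real dimension at most two. Together these give the lemma.

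For (a), the key observation is an \emph{index swap} on adjoints: since $|r_i|=1$ one has $\bar r_i = r_i^{-1}$, and comparing $\langle R^* u, v_j\rangle = \bar r_i \langle u, v_j\rangle$ with $\langle u, R v_j\rangle = \bar r_j \langle u, v_j\rangle$ for an eigenvector $u$ of $R^*$ with eigenvalue $\bar r_i$ shows that the $\bar r_i$-eigenspace of $R^*$ is $E_{3-i}^\perp$. In particular, a unit generator $u$ of $E_i^\perp$ is an eigenvector of $R^*$ with eigenvalue $\bar r_{3-i}$. Writing $P_{E_i^\perp} = u u^*$, a one-line computation gives $R^*(u u^*)R = (R^* u)(R^* u)^* = |r_{3-i}|^2 u u^* = u u^*$, establishing invariance. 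Linear independence is immediate because the two rank-one projections have distinct images $E_1^\perp \neq E_2^\perp$.

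For (b), I would substitute $v = v_i$ and $w = v_j$ into the invariance condition $\langle R v, R w\rangle_H = \langle v, w\rangle_H$ to obtain $(r_i \bar r_j - 1)\langle v_i, v_j\rangle_H = 0$. The $i=j$ equation is vacuous since $|r_i|=1$, while for $i \neq j$ one has $r_i \bar r_j \neq 1$ (using $r_1 \neq r_2$ together with $|r_j|=1$), forcing $\langle v_i, v_j\rangle_H = 0$. The Hermitian form is therefore determined by the two real numbers $\langle v_i, v_i\rangle_H$ for $i=1,2$, which gives the dimension bound.

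The only genuinely subtle point I expect is the index swap in part (a) — the $\bar r_i$-eigenspace of $R^*$ is $E_{3-i}^\perp$, not $E_i^\perp$. This is where the hypothesis $|r_i|=1$ is essentially used, and it is precisely what makes $P_{E_i^\perp}$ rather than $P_{E_i}$ appear in the statement. The remaining arguments are routine linear algebra in the eigenbasis of $R$.
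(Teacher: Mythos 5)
Your proof is correct, and part (a) coincides with the paper's argument: the adjoint index swap (a unit generator $u$ of $E_i^{\perp}$ satisfies $R^*u=\bar r_{3-i}u$) is exactly how the paper computes $\lambda_1=\bar r_2$, $\lambda_2=\bar r_1$, and your matrix identity $R^*(uu^*)R=uu^*$ is the same computation the paper does in form language. Where you genuinely diverge is the dimension bound. The paper argues that $H\mapsto R^*HR$ is an orientation-preserving linear isomorphism of $\R^{1,3}$ different from the identity and hence cannot fix a subspace of dimension $\geq 3$; as literally stated this needs the additional fact that the map lies in $O(1,3)$ (it preserves $\det H$ because $|\det R|=1$), since a general orientation-preserving linear map of $\R^4$ --- a shear, say --- can perfectly well fix a hyperplane pointwise, and only within the orthogonal group does ``fixes a hyperplane pointwise'' force ``identity or orientation-reversing reflection''. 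Your route --- evaluating $\langle Rv,Rw\rangle_H=\langle v,w\rangle_H$ on the eigenbasis to force $\langle v_1,v_2\rangle_H=0$, leaving only the two real diagonal entries --- is more elementary and self-contained, and it identifies the fixed space outright as the Hermitian forms diagonal in the eigenbasis of $R$, so together with (a) it gives the exact dimension without any appeal to the Lorentzian structure. Both arguments are valid; yours trades the paper's slicker geometric one-liner for a computation that needs no auxiliary facts.
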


\begin{proof}
	The conjugate transpose \(R^*\) acts on \(E_1^{\perp}\) as complex multiplication by \(\bar{r}_2\) and on \(E_2^{\perp}\) as \(\bar{r}_1\). Indeed,
	let \(u_1, u_2\) unit norm eigenvectors with \(Ru_i = r_i u_i\) and take unit normals \(n_i \perp E_i\) then
	\[
	\langle R^* n_i , u_i \rangle = \langle n_i, Ru_i \rangle = 0
	\] 
	so \(R^*n_i \perp E_i\) and we must have \(R^*n_i = \lambda_i n_i\) for some \(\lambda_i \in \C\). To compute \(\lambda_1\) take the inner product
	\begin{align*}
		\langle R^* n_1, u_2 \rangle &= \lambda_1 \langle n_1, u_2 \rangle \\
		&= \bar{r}_2 \langle n_1, u_2 \rangle .
	\end{align*}
	Since \(\langle n_1, u_2 \rangle \neq 0\) (because \(u_2 \notin \C \cdot u_1\)) we conclude that \(\lambda_1=\bar{r}_2\). In the same way we obtain \(\lambda_2=\bar{r}_1\).
	
	Orthogonal projection to \(E_1^{\perp}\) gives us Hermitian form \(P_{E_1^{\perp}} (v, v) = | \langle v, n_1 \rangle |^2\) and
	\begin{align*}
	R \cdot P_{E_1^{\perp}} (v, v)  &=
	| \langle R v, n_1 \rangle |^2 
	=  | \langle v, R^* n_1 \rangle |^2 \\
	&= | \langle v, \bar{r}_2 n_1 \rangle |^2 
	=  | \langle v,  n_1 \rangle |^2 \\
	&= P_{E_1^{\perp}} (v, v) .	
	\end{align*}
	In the same way, \(R \cdot P_{E_2^{\perp}} = P_{E_2^{\perp}}\) and we conclude that the \(2\)-dimensional vector subspace of \(\R^{1,3}\) spanned by \(P_{E_1^{\perp}}\) and \(P_{E_2^{\perp}}\) is fixed by the action of \(R\). On the other hand, since \(\inn_H \mapsto R \cdot \inn_{H}\) is an orientation preserving linear isomorphism of \(\R^{1,4}\) different from the identity, it can not fix a subspace of dimension \( \geq 3\).
\end{proof}

\begin{lemma}\label{lem:hermform}
	Let \(R, S \in GL(2, \C)\) be as in Lemma \ref{lem:circle}. Then there is a unique up to scalar multiplication non-degenerate Hermitian form \(\inn_{H}\) on \(\C^2\) invariant under both \(R\) and \(S\). Moreover \(\inn_{H}\) is definite if and only if the two pairs of eigenvalues \(\{r_1, r_2\}\) and \(\{s_1, s_2\}\) interlace in the unit circle.
\end{lemma}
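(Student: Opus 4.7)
The plan is to establish the lemma in three parts: uniqueness and existence of a common invariant Hermitian form $H$ (one-dimensionality of $V_R \cap V_S$), non-degeneracy of any non-zero such $H$, and the interlacing criterion for definiteness.

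For existence and uniqueness, I would combine Lemma \ref{lem:fixedH} with Lemma \ref{lem:circle}. Lemma \ref{lem:fixedH} identifies $V_R$ as the $2$-plane in $\R^{1,3}$ spanned by $P_{E_{r_1}^\perp}$ and $P_{E_{r_2}^\perp}$, and similarly for $V_S$; under Lemma \ref{lem:sphere} the four projections are trace-one vectors whose trace-free parts are the points $\tfrac12 \bx(E_{r_i})$, $\tfrac12 \bx(E_{s_j})$ of $S^2$. Lemma \ref{lem:circle} says these four points are coplanar in $\R^3$, producing a single up-to-scale linear relation among the four trace-one projections. Hence $\dim(V_R + V_S) = 3$ and
\[
\dim(V_R \cap V_S) = 2 + 2 - 3 = 1 ,
\]
which yields the unique up to scalar common invariant Hermitian form $H$.

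Non-degeneracy follows from a short invariance argument: the kernel of any non-zero invariant $H$ is a proper $R$- and $S$-invariant subspace of $\C^2$, hence would have to be simultaneously an eigenline of $R$ and of $S$, but Lemma \ref{lem:circle} already asserts the four eigenlines $E_{r_i}, E_{s_j}$ are pairwise distinct. So $\ker H = 0$ and $H$ is non-degenerate.

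For the definiteness criterion, the common circle of Lemma \ref{lem:circle} lets me simultaneously conjugate $R$ and $S$ by an element of $GL(2, \C)$ (a Möbius transformation on $\CP^1$) so that the four eigenlines lie on $\RP^1 \subset \CP^1$. Choose a basis $\{e_1, e_2\}$ of the underlying real subspace in which $R$ is diagonal, and write the $S$-eigenvectors as $e_1' = \alpha e_1 + \beta e_2$, $e_2' = \gamma e_1 + \delta e_2$ with $\alpha, \beta, \gamma, \delta \in \R$. The $R$-invariance forces $H = \begin{pmatrix} a & 0 \\ 0 & c \end{pmatrix}$ with $a, c \in \R$, and $S$-invariance reduces, via the reality of $\alpha, \beta, \gamma, \delta$, to the single equation $\alpha\gamma\, a + \beta\delta\, c = 0$. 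Therefore $H$ is definite iff $ac > 0$ iff $\alpha\beta\gamma\delta < 0$; but in the affine coordinate on $\RP^1$ the four eigenlines become $0, \infty, \beta/\alpha, \delta/\gamma$, and $\alpha\beta\gamma\delta < 0$ is exactly the condition that $\beta/\alpha$ and $\delta/\gamma$ have opposite signs, i.e.\ that the two pairs interlace on $\RP^1$. Since cyclic order on a circle is preserved by Möbius transformations, this matches the interlacing of the eigenvalues on the original unit circle. The main obstacle I expect is the bookkeeping: passing cleanly from the geometric coplanarity given by Lemma \ref{lem:circle} to an honest linear relation in the trace-one affine hyperplane of $\R^{1,3}$, and checking that the ``move to $\RP^1$'' at the start of Step 3 is genuinely available as a single simultaneous conjugation of $R$ and $S$.
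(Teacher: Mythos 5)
Your proposal is correct, and while the existence--uniqueness part is essentially the paper's argument in linear-algebra rather than projective language (the paper phrases your dimension count $2+2-3=1$ as: the two lines $\Fix(R),\Fix(S)\subset\RP^3$ lie in a common $\RP^2$, hence meet in a point), your treatment of the signature is genuinely different. The paper stays in the Klein model of the space of Hermitian forms: after normalizing $R,S$ to the companion form \eqref{eq:monmat}, the four projections land on the unit circle in the plane $\{x_1=0\}$, and definiteness is read off from whether the two chords $\Fix(R)$ and $\Fix(S)$ meet inside the unit disc, which happens exactly when the endpoints interlace. You instead conjugate so that all four eigenlines lie on $\RP^1$, reduce the invariance conditions to $H=\mathrm{diag}(a,c)$ with $a\alpha\gamma+c\beta\delta=0$, and read off definiteness from $ac>0\iff\alpha\beta\gamma\delta<0$. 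Both work; the paper's picture is what generalizes to the hyperbolic-geometry viewpoint used elsewhere in Section 2, while yours is more elementary and also hands you non-degeneracy for free ($ac=-t^2\alpha\beta\gamma\delta\neq 0$), on top of the clean kernel argument you give separately. The two ``bookkeeping'' worries you flag are both fine: the four projections live in the affine hyperplane $\{\tr=1\}$, which does not contain the origin, so coplanarity of the $\bx$'s gives linear span exactly $3$; and any circle in $\CP^1$ is M\"obius-equivalent to $\RP^1$, and a M\"obius map lifts to a single element of $GL(2,\C)$ conjugating $R$ and $S$ simultaneously and acting on Hermitian forms by $H\mapsto A^*HA$, which preserves signature.

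The one step I would ask you to make explicit is the very last identification: ``interlacing of the eigen\emph{lines} on their common circle'' versus ``interlacing of the eigen\emph{values} on the unit circle.'' M\"obius invariance of cyclic order only tells you that interlacing of the eigenlines is well defined; it does not by itself relate the eigenlines to the eigenvalues. That link comes from the explicit computation in Lemma \ref{lem:circle}: in the normal form \eqref{eq:monmat} the eigenline for $r_1$ is the point $-r_2$, that for $r_2$ is $-r_1$, and similarly for $S$, so the circle of eigenlines \emph{is} the unit circle and the unordered pairs $\{E_{r_1},E_{r_2}\}$, $\{E_{s_1},E_{s_2}\}$ correspond to $\{-r_1,-r_2\}$, $\{-s_1,-s_2\}$; since $\xi\mapsto-\xi$ is a rotation, interlacing of these pairs is the same as interlacing of $\{r_1,r_2\}$ and $\{s_1,s_2\}$. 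With that sentence added, your argument is complete.
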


\begin{proof}
	Consider the projective space \(\RP^3 = \mathbb{P}(\R^{1,3})\) with homogeneous coordinates \([x_0, x_1, x_2, x_3]\).
	If \(L \subset \C^2\) is a complex line then \(P_{L^{\perp}}\) has coordinate
	\(x_0 = \tr P_{L^{\perp}} /2 = 1/2\) and its image in the chart
	\begin{equation}\label{eq:chart}
			[x_0, x_1, x_2, x_3] \mapsto \left( \frac{x_1}{x_0}, \frac{x_2}{x_0}, \frac{x_3}{x_0} \right)
	\end{equation}
	is equal to the point \(\bx(L) \in \R^3\) where \(\bx(L)\) is as in Lemma \ref{lem:sphere}. In particular, if \(L = \C \cdot (\lambda, 1)\) then
	\begin{equation}\label{eq:chart2}
		\bx(L) = \frac{1}{1+|\lambda|^2} (|\lambda|^2-1, 2 \re(\lambda), 2\im(\lambda))		
	\end{equation}
	is the usual stereographic projection (see Equation \eqref{eq:sterproj}).
	
	By Lemma \ref{lem:fixedH} the set of Hermitian forms preserved by \(R\) make a line \(\Fix(R) \subset \RP^3\) going through \(\bx(E_{r_1})\) and \(\bx(E_{r_2})\). By Lemma \ref{lem:circle} the lines \(\Fix(R)\) and \(\Fix(s)\) lie on a plane \(\RP^2 \subset \RP^3\) and so they intersect at a point, which represents the unique up to scale Hermitian form \(\inn_{H}\) fixed by \(R\) and \(S\). In order to compute the signature of \(\inn_{H}\) we
	can make a linear change of coordinates and assume that the matrices \(R, S\) are given by \eqref{eq:monmat} with eigenvectors
	\[
	(-r_2, 1), \hspace{2mm} (-r_1, 1), \hspace{2mm} (-s_2, 1), \hspace{2mm} (-s_1,1) .
	\]
	By Equation \eqref{eq:chart2} the projections to the orthogonal complements lie on the plane \(\{x_1=0\} \subset \RP^3\), their images in the chart \eqref{eq:chart} are on the circle \((x_2/x_0)^2 + (x_3/x_0)^2 =1\) and have coordinates
	\[
	\xi = (x_2/x_0)+i(x_3/x_0) = \hspace{2mm} -r_2, \hspace{2mm}, -r_1, \hspace{2mm} -s_1, \hspace{2mm} -s_2
	\]
	On the other hand the definite Hermitian forms \(\det H = x_0^2 - x_1^2 - x_2^2 - x_3^2 >0\) are mapped to the unit ball by \eqref{eq:chart}. The invariant Hermitian form \(\inn_{H}\) is definite if and only if
	the line \(\Fix(R)\) (that connects \(-r_2\) with \(-r_1\)) intersects \(\Fix(S)\) (that connects \(-s_2\) with \(-s_1\)) inside the unit disc \(\{|\xi|<1\}\) and this happens if and only if the \(2\) points \(-s_1, -s_2\) lie on opposite sides of the line \(\Fix(R)\) or equivalently the two pairs of points \(\{-r_1, -r_2\}\) and \(\{-s_1, -s_2\}\) interlace in the unit circle.
\end{proof}

\subsection{The case of \(3\) lines.}\label{sec:3linesHF}
Without loss of generality we let
\[
L_1= \{z=0\}, \hspace{2mm} L_2 = \{w=0\}, \hspace{2mm} L_3 = \{w=z\} .
\]
The unique standard connection with prescribed residue traces \(a_1, a_2, a_3\) is
\begin{equation}\label{eq:3lines}
\nabla = d -  \left( A_1 \frac{dz}{z} + A_2 \frac{dw}{w} + A_3 \frac{d(z-w)}{z-w} \right)
\end{equation}
with
\[
A_1 = \begin{pmatrix}
a_1 & 0 \\
\frac{a_1 + a_3 - a_2}{2} & 0
\end{pmatrix}, \hspace{2mm}
A_2 = \begin{pmatrix}
0 & \frac{a_2+a_3-a_1}{2} \\
0 & a_2
\end{pmatrix},
\hspace{2mm}
A_3=
\begin{pmatrix}
\frac{a_2+a_3-a_1}{2} & \frac{a_1-a_2-a_3}{2}\\
\frac{a_2-a_1-a_3}{2} & \frac{a_1+a_3-a_2}{2}
\end{pmatrix} .
\]

In this section we assume that \(a_i \in \R\). It is convenient to introduce parameters \(b_1, b_2, b_3\) such that 
\begin{equation}\label{eq:ab}
	a_i = b_j + b_k
\end{equation}
over cyclic permutations \((i, j, k)\) of \((1,2,3)\).
The main result is the next.

\begin{proposition}[Invariant Hermitian form]\label{prop:3lines}
	Let \(a_1, a_2, a_3\) be real and non-integer. Assume that \(b_i \notin \Z\) for \(i=1, 2, 3\) and that \(b_1 + b_2 + b_3 \notin \Z\). 
	
	Then \(\nabla\) preserves a non-degenerate Hermitian form whose signature \((p,2-p)\) is given by the following formula
	\begin{equation}\label{eq:signature}
	p = \lfloor s \rfloor \hspace{2mm} \text{ with } \hspace{2mm} s= \sum_i \{b_i\}
	\end{equation}
	where \(0 < \{b_i\} < 1\) denotes the \(\Z\)-periodic fractional part.
\end{proposition}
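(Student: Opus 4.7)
The plan is to translate the search for a flat Hermitian form on $T\C^2$ into an invariant-form question for the holonomy representation, and then apply the $SL(2,\C)$ representation theory of the thrice-punctured sphere (Fricke-Klein) both to produce the form and to determine its signature.

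First, restrict $\nabla$ to a generic affine line $C \subset \C^2$ meeting each $L_j$ transversally. Following the proof of Lemma \ref{lem:mon}, this yields a rank $2$ Fuchsian system on $\CP^1$ whose monodromy $M_1,M_2,M_3 \in GL(2,\C)$ has eigenvalues $1$ on $L_j$ and $e^{2\pi i a_j}$ on $\C\cdot n_j$, with $M_1 M_2 M_3 = e^{2\pi i c}\,\Id$ and $c = b_1+b_2+b_3$. A $\nabla$-flat Hermitian form on $T\C^2$ is precisely a Hermitian form on the fiber invariant under all three $M_j$. Renormalize by $\tilde M_j := e^{-\pi i a_j} M_j \in SL(2,\C)$: each $\tilde M_j$ has trace $2\cos(\pi a_j) \in \R$, eigenvalues $e^{\pm\pi i a_j}$ on $S^1$, and $\tilde M_1\tilde M_2\tilde M_3 = \Id$. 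Since $|e^{-\pi i a_j}|=1$, invariance under $M_j$ and under $\tilde M_j$ coincide. The hypotheses $a_j,b_j,c \notin \Z$ match those of Corollary \ref{cor:irrhol}, so the representation $\tilde\rho$ is irreducible.

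The pair $(\tilde M_1, \tilde M_2)$ is thus an irreducible $SL(2,\C)$-representation of the free group $F_2$, which by Fricke-Klein is determined up to simultaneous conjugation by the three traces $\tr\tilde M_1,\tr\tilde M_2,\tr(\tilde M_1\tilde M_2) = \tr\tilde M_3$. All three are real here, so $\tilde\rho$ is conjugate to its complex conjugate $\overline{\tilde\rho}$ via some $P\in SL(2,\C)$. Combining $P$ with the $SL(2)$-adjugate identity $A^{-1} = J^{-1}A^t J$ (where $J$ is the standard skew form on $\C^2$) produces a non-degenerate $\tilde\rho$-invariant Hermitian form $H$, unique up to real scalar. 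Its signature is encoded in the Fricke-Klein discriminant
\begin{equation*}
\kappa \;:=\; \tr[\tilde M_1,\tilde M_2] - 2 \;=\; x^2 + y^2 + z^2 - xyz - 4, \qquad x,y,z = 2\cos(\pi a_j),
\end{equation*}
which is negative in the $SU(2)$-case (definite $H$) and positive in the $SU(1,1)$-case (signature $(1,1)$). A direct trigonometric simplification using $a_j = b_{j+1}+b_{j+2}$ and the expansion $1-e^{2\pi i b_j} = -2i e^{i\pi b_j}\sin(\pi b_j)$ collapses $\kappa$ to
\begin{equation*}
\kappa \;=\; -16\,\sin(\pi b_1)\sin(\pi b_2)\sin(\pi b_3)\sin(\pi(b_1+b_2+b_3)).
\end{equation*}

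Finally, since $\sin(\pi b_j)$ has sign $(-1)^{\lfloor b_j\rfloor}$ and $\lfloor c\rfloor = \sum_j\lfloor b_j\rfloor + \lfloor s\rfloor$ with $s = \sum_j\{b_j\}\in(0,3)$, one computes $\operatorname{sgn}(\kappa) = (-1)^{\lfloor s\rfloor+1}$. Hence $\lfloor s\rfloor = 1$ produces signature $(1,1)$, while $\lfloor s\rfloor \in \{0,2\}$ produces a definite form. The remaining choice between $p=0$ and $p=2$ is pinned down by continuity of the sign of $H$ across the walls $s = 1, 2$ where $H$ degenerates. I expect this last sign-tracking step to be the main obstacle; by contrast, existence of $H$ follows immediately from Fricke-Klein plus the reality of the character, and the trigonometric factorization of $\kappa$ is a routine identity obtained by writing everything in terms of $e^{2\pi i b_j}$.
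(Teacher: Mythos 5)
Your proposal is correct in substance but follows a genuinely different route from the paper's. The paper also reduces to the rank-two monodromy $M_1,M_2,M_3$ with $M_1M_2M_3=e^{2\pi i c}\Id$, but then works directly in the projectivized space of Hermitian matrices $\RP^3=\mathbb{P}(\R^{1,3})$: after putting the two generators in companion form (Lemma \ref{lem:circle}), the fixed-form sets of $R$ and $S$ become two projective lines through the stereographic images of the eigenspaces, these four points lie on a common circle, and definiteness is read off from whether the two chords cross inside the unit disc, i.e.\ from an interlacing condition on the arguments $b_1+b_2+b_3,\ b_1,\ 0,\ b_1+b_3$ (Lemmas \ref{lem:hermform} and \ref{lem:interlace}). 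Your version replaces this with Fricke--Klein rigidity of the real character to get existence and with the sign of $\kappa=\tr[\tilde M_1,\tilde M_2]-2$ to get the signature; the trigonometric factorization $\kappa=-16\prod_i\sin(\pi b_i)\cdot\sin(\pi\sum_i b_i)$ is a correct classical identity and, combined with $\lfloor c\rfloor=\sum_j\lfloor b_j\rfloor+\lfloor s\rfloor$, gives exactly the paper's dichotomy. What the paper's approach buys is that every step is elementary linear algebra with explicit eigenvectors; what yours buys is a cleaner conceptual source for the form (reality of the character) and a closed-form discriminant whose vanishing locus $b_i\in\Z$ or $\sum b_i\in\Z$ makes the non-integrality hypotheses transparent.

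Two points need attention. First, the bald statement that $\kappa<0$ characterizes the $SU(2)$ case and $\kappa>0$ the $SU(1,1)$ case is false for general irreducible representations with real character (Fuchsian once-punctured-torus groups have $\tr[A,B]\leq-2$, hence $\kappa\leq-4$, yet are in $SL(2,\R)$). It is true here only because both generators are elliptic ($\tr\tilde M_j=2\cos(\pi a_j)\in(-2,2)$ since $a_j\notin\Z$): for two elliptic elements with distinct fixed points one computes $\kappa=4\sin^2(\pi a_1)\sin^2(\pi a_2)\,(D^2-1)$ with $D=\cos d$ (spherical) or $\cosh d$ (hyperbolic), $d>0$ the distance between the rotation axes, so the sign of $\kappa$ does decide — but you must say this, and also note $\kappa\neq0$ (which your factorization gives, since $b_i,\sum b_i\notin\Z$). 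Second, the step you flag as the main obstacle — distinguishing $p=0$ from $p=2$ — is not an obstacle at all: the invariant form is unique only up to a \emph{real} scalar of either sign, so in the definite case the choice between $(2,0)$ and $(0,2)$ is a normalization convention rather than an invariant of $\nabla$; the paper's own proof likewise only establishes the definite-versus-indefinite dichotomy, which is all that is used later (Lemma \ref{lem:unitary}).
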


\begin{remark}
	The parameters \(b_i\) in Equation \eqref{eq:ab} are uniquely determined and given by
	\begin{equation}\label{eq:ba}
		b_i = \frac{a_j + a_k - a_i}{2} .
	\end{equation}
	In particular, we have that
	\begin{equation}\label{eq:sumb}
		b_1 + b_2 + b_3 = \frac{a_1+a_2+a_3}{2} .
	\end{equation}
	It follows from Equations \eqref{eq:ba} and \eqref{eq:sumb} that the two conditions together \(b_i \notin \Z\) and \(\sum_i b_i \notin \Z\) in Proposition \ref{prop:3lines} are equivalent to Condition \ref{noin2} in Corollary \ref{cor:irrhol0}. Therefore, in the setting of Proposition \ref{prop:3lines} the holonomy of \(\nabla\) is irreducible.
\end{remark}

\begin{proof}[Proof of Proposition \ref{prop:3lines}]
Fix \(x_0 \in \C^2 \setminus \cup_i L_i\) and take a basis of \(T_{x_0}\C^2\). By Lemma \ref{lem:mon} the holonomy of \(\nabla\) is a subgroup of \(GL(2, \C)\) given by 
\[
\Hol(\nabla) = \Bigl< M_1, M_2, M_3 \hspace{2mm} | \hspace{2mm} M_1 M_2 M_3 = \exp(2\pi i c) \cdot \Id \Bigr>
\]
where \(M_i\) has eigenvalues \(1, \exp(2\pi i a_i)\) and \(2c = \sum_i a_i\). Let  
\begin{equation}\label{eq:RS}
R = \exp(-2\pi i c) \cdot M_1, \hspace{2mm} S = M_2^{-1} .
\end{equation}
Then \(\Hol(\nabla)\) is generated by the \(3\) elements \(R, S, c \cdot \Id\) and
\begin{equation}\label{eq:relation}
RS^{-1} = M_3^{-1} .
\end{equation}
The advantage of this presentation for the holonomy group is that \( c \cdot \Id\) preserves all Hermitian forms (because \(|c|=1\)) and so \(\inn_{H}\) is invariant by \(\Hol(\nabla)\) if and only if it is invariant under both \(R, S\).

Equation \eqref{eq:RS} implies that the eigenvalues \(R, S\) are
\begin{equation*}
r_1 = \exp(-2\pi i c), \hspace{2mm} r_2 = \exp(2\pi i (a_1-c)), \hspace{2mm} s_1 = 1, \hspace{2mm} s_2=\exp(-2\pi i a_2) .
\end{equation*}
Note that \(r_1 \neq r_2\) because \(a_1 \notin \Z\) and \(s_1 \neq s_2\) because \(a_2 \notin \Z\). If we replace \(a_i = b_j + b_k\) then the arguments\footnote{Here we take the argument of a unit complex number \(\exp(2\pi i \theta)\) to be equal to \(\theta \in \R / \Z\).} of \(\bar{r}_1, \bar{r}_2, \bar{s}_1, \bar{s}_2\) are
\begin{equation}\label{eq:arguments}
b_1+b_2+b_3, \hspace{2mm} b_1, \hspace{2mm} 0, \hspace{2mm}  b_1+b_3 .
\end{equation}
The fact that \(\{r_1, r_2\} \cap \{s_1, s_2\} = \emptyset\) now follows from the assumption that \(b_i \notin \Z\) and \(\sum_i b_i \notin \Z\).

Equation \eqref{eq:relation} implies that \(RS^{-1}\) has eigenvalue \(1\). We apply Lemma \ref{lem:hermform} to get a non-degenerate invariant Hermitian form \(\inn_H\). Finally, Equation \eqref{eq:signature} for the signature follows from the next elementary Lemma \ref{lem:interlace}.	
\end{proof}

\begin{lemma}\label{lem:interlace}
	The \(2\) pair of points in the circle \(\{r_1, r_2\}\) and \(\{s_1, s_2\}\)  interlace if and only if \(\sum_i \{b_i\} < 1\) or \(\sum_i \{b_i\}>2\).
\end{lemma}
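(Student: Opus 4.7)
The plan is to reduce the interlacing condition to the sign of a cross-ratio and then compute that cross-ratio explicitly.

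First, I will use the standard fact that for four distinct points $P_1, P_2, Q_1, Q_2$ on a line or circle, the pair $\{P_1, P_2\}$ separates the pair $\{Q_1, Q_2\}$ if and only if the cross-ratio
\[
\frac{(P_1 - Q_1)(P_2 - Q_2)}{(P_1 - Q_2)(P_2 - Q_1)}
\]
is a negative real number. Since complex conjugation is a reflection of the unit circle and preserves interlacing, I will evaluate this cross-ratio on the conjugates $\bar r_1, \bar r_2, \bar s_1, \bar s_2$, whose arguments are the quantities displayed in \eqref{eq:arguments}. The standing hypotheses $b_i \notin \Z$ and $\sum_i b_i \notin \Z$ already ensure that the four points are pairwise distinct, so the cross-ratio is well-defined.

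Setting $\beta_i := \{b_i\} \in (0, 1)$ and $s := \beta_1 + \beta_2 + \beta_3 \in (0, 3)$, the four unit complex numbers are $e^{2\pi i s}$, $e^{2\pi i \beta_1}$, $1$, $e^{2\pi i (\beta_1 + \beta_3)}$. I will apply the identity
\[
e^{2\pi i \theta} - e^{2\pi i \phi} = 2i\, e^{\pi i(\theta+\phi)} \sin(\pi(\theta - \phi))
\]
to each of the four differences in the cross-ratio. A short bookkeeping shows that the exponential prefactors combine to the \emph{same} overall phase $e^{\pi i (s + 2\beta_1 + \beta_3)}$ in both numerator and denominator and cancel; what remains is the clean formula
\[
-\,\frac{\sin(\pi s)\,\sin(\pi \beta_3)}{\sin(\pi \beta_1)\,\sin(\pi \beta_2)} .
\]
This expression is manifestly real, which is a useful sanity check since the cross-ratio of four points on the unit circle must be real.

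To conclude, $\beta_i \in (0, 1)$ forces $\sin(\pi \beta_i) > 0$ for $i = 1, 2, 3$, so the sign of the cross-ratio is opposite to the sign of $\sin(\pi s)$. Hence the two pairs interlace if and only if $\sin(\pi s) > 0$, and with $s \in (0, 3)$ this is equivalent to $s \in (0, 1) \cup (2, 3)$, which is precisely the statement of the lemma. I expect no real obstacle in this approach: the only delicate point is the phase bookkeeping in the cross-ratio calculation, and it is self-checking against the real-valuedness of the result. A case-by-case analysis based on which of $\beta_1 + \beta_3$ and $s$ lie in $(0,1)$ versus $(1,2)$ (etc.) would also work but is considerably messier.
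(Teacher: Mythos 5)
Your proof is correct, but it takes a genuinely different route from the paper's. The paper reads off the cyclic order of the four arguments $0,\ \{b_1\},\ \{b_1+b_3\},\ \{b_1+b_2+b_3\}$ directly and reduces the alternation condition to the two chains of inequalities $\{b_1\} < \{b_1+b_3\} < \{b_1+b_2+b_3\}$ or its reverse, which it then unwinds via the addition formula for fractional parts. You instead invoke the classical criterion that two pairs of concyclic points separate each other precisely when their cross-ratio is a negative real, and compute that cross-ratio in closed form; I have checked the phase bookkeeping and your formula $-\sin(\pi s)\sin(\pi\beta_3)/\bigl(\sin(\pi\beta_1)\sin(\pi\beta_2)\bigr)$ is correct, so the conclusion $\sin(\pi s)>0 \iff s\in(0,1)\cup(2,3)$ follows (note $s\notin\Z$ since $\sum_i b_i\notin\Z$). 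Your approach buys a single symmetric formula whose sign is transparent and which avoids the case analysis on fractional parts entirely; it also makes the symmetry in $\beta_1,\beta_2,\beta_3$ of the answer visible, which the paper's asymmetric-looking chain of inequalities obscures. The paper's argument is more elementary (no cross-ratio machinery, only ordering of real numbers mod $1$) and stays closer to the geometric picture of points on the circle used in the surrounding proof of Lemma \ref{lem:hermform}. One small point: for the cross-ratio to be defined you need all four points distinct, which uses not only $b_i\notin\Z$ and $\sum_i b_i\notin\Z$ but also $a_1, a_2\notin\Z$ (to get $r_1\neq r_2$ and $s_1\neq s_2$); these are all among the standing hypotheses of Proposition \ref{prop:3lines}, so nothing is lost, but you should cite them in full.
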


\begin{proof}
	Equation \eqref{eq:arguments} implies that the interlace condition holds if and only if
	\[
	\{b_1\} < \{b_1 + b_3\} < \{b_1 + b_2 + b_3\}, \hspace{2mm} \text{ or } \hspace{2mm} \{b_1+b_2+b_3\} < \{b_1 + b_3\} < \{b_1\} .
	\]
	Using the formula for the fractional part of the sum of two real numbers \(x, y\)
	\begin{equation*}
		\{x+y\} = \begin{cases}
		\{x\} + \{y\} & \text{ if } \{x\} + \{y\} < 1 \\
		\{x\} + \{y\} -1 & \text{ if } \{x\} + \{y\} \geq 1
		\end{cases}
	\end{equation*}
	it is easy to see that
	\[
		\{b_1\} < \{b_1 + b_3\} < \{b_1 + b_2 + b_3\} \iff \sum_i \{b_i\} < 1
	\]
	\[
	 \{b_1+b_2+b_3\} < \{b_1 + b_3\} < \{b_1\} \iff \sum_i \{b_i\}>2 .
	 \qedhere
	\]
\end{proof}

\begin{remark}
	Higher dimensional versions of Proposition \ref{prop:3lines} hold for the braid arrangement and Lauricella connection, see \cite[Section 3.6]{CHL} and \cite[Section 9]{dBP}.
\end{remark}

The next result characterizes the values of the real parameters \(a_1, a_2, a_3\)
for which the connection \eqref{eq:3lines} is Dunkl.
The next Lemmas \ref{lem:3dunkl} and \ref{lem:dunkunit} are not needed for the proof of Theorem \ref{thm:main} but we include them for completeness.

\begin{lemma}\label{lem:3dunkl}
	Let \(a_1, a_2, a_3 \in \R^*\). Then \(\nabla\) is Dunkl if and only if
	\begin{equation}\label{eq:moda}
	|a_i| < |a_j| + |a_k|
	\end{equation}
	for all \((i,j,k)\) running over cyclic permutations of \((1,2,3)\). 
\end{lemma}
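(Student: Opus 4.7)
The plan is to translate the Dunkl condition into the existence of three pairwise distinct points on the sphere \(S^2 \subset \R^3\) with prescribed weighted vanishing sum, and then to characterize such an existence via the strict triangle inequality. Since \(n=3\), the earlier dimension lemma implies that \(\nabla\) is the unique standard connection with residue traces \(a_i\). It is Dunkl iff there is a positive definite Hermitian form \(\inn_H\) on \(\C^2\) making every \(A_i\) self-adjoint; because each \(A_i\) has rank one with kernel \(L_i\) and trace \(a_i\), self-adjointness forces \(A_i = a_i \cdot P_i\) with \(P_i\) the orthogonal projection onto \(L_i^{\perp}\) in \(\inn_H\), and the relation \(\sum_i A_i = c\cdot \Id\) then becomes \(\sum_i a_i P_i = c\cdot \Id\).

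Next I would exploit the \(PSL(2,\C)\)-action on the space of positive definite Hermitian forms recalled in Section \ref{sec:Dunkl}. Replacing \((\inn_H, L_i)\) by \((A^*\inn_H A, A^{-1}L_i)\) for \(A \in SL(2, \C)\) transforms the identity \(\sum a_i P_i = c\cdot \Id\) into an analogous identity for the new data, so I may normalize \(\inn_H\) to be the standard Hermitian inner product. Triple transitivity of \(PSL(2,\C)\) on \(\CP^1\) lets the triple \((A^{-1}L_1, A^{-1}L_2, A^{-1}L_3)\) range over all ordered triples of pairwise distinct complex lines. By Lemma \ref{lem:sphere}(ii), the Dunkl condition is therefore equivalent to the existence of three \emph{pairwise distinct} unit vectors \(v_1, v_2, v_3 \in S^2 \subset \R^3\) such that
\begin{equation*}
a_1 v_1 + a_2 v_2 + a_3 v_3 = 0.
\end{equation*}

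Finally, setting \(w_i = a_i v_i\) turns the above into the existence of three vectors of prescribed lengths \(|a_i|\) closing up to form a (possibly degenerate) Euclidean triangle, which is possible precisely when the non-strict triangle inequalities \(|a_i| \le |a_j| + |a_k|\) hold. A degenerate triangle forces the \(w_i\) to be collinear in \(\R^3\), hence the \(v_i\) to all lie on a single line through the origin; that line meets \(S^2\) in only two antipodal points, so by pigeonhole at least two of the \(v_i\) coincide, contradicting distinctness. Conversely, strict inequalities produce a non-degenerate Euclidean triangle whose oriented edges yield three pairwise non-parallel \(v_i\). The one mildly delicate point, which I expect to be the main obstacle, is exactly this distinctness bookkeeping --- converting the collinearity of the \(w_i\) forced by equality in the triangle inequality into a genuine failure of distinctness of the \(v_i\) on \(S^2\), and conversely guaranteeing distinctness from strictness.
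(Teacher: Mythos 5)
Your argument is correct, but it takes a genuinely different route from the paper. The paper's proof is a direct computation: it exhibits the unique-up-to-scale Hermitian matrix \(H\) making \(A_1, A_2, A_3\) self-adjoint, namely
\(H = \begin{pmatrix} b_2(b_1+b_3) & -b_1 b_2 \\ -b_1 b_2 & b_1(b_2+b_3) \end{pmatrix}\)
with \(2b_i = a_j + a_k - a_i\), observes that \(\nabla\) is Dunkl iff \(\det H = b_1 b_2 b_3(b_1+b_2+b_3) > 0\), and notes that this quartic, rewritten as \((a_2+a_3-a_1)(a_1+a_3-a_2)(a_1+a_2-a_3)(a_1+a_2+a_3)\), is invariant under sign changes \(a_i \mapsto -a_i\), so its positivity is equivalent to the strict triangle inequalities for the \(|a_i|\). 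You instead reuse the geometric machinery of Section \ref{sec:Dunkl}: after the reduction via uniqueness of the standard connection for \(n=3\) (which you correctly identify as necessary to pass from ``some Dunkl connection with these traces exists'' back to ``\(\nabla\) itself is Dunkl''), the \(PSL(2,\C)\)-action and triple transitivity let you replace the fixed configuration by an arbitrary one with the standard inner product, and Lemma \ref{lem:sphere}(ii) converts the problem into closing a Euclidean triangle with side lengths \(|a_i|\) by three pairwise distinct unit vectors; your bookkeeping on degenerate versus non-degenerate triangles (collinear \(w_i\) forcing two of the \(v_i\) to coincide, and conversely) is sound and handles mixed signs of the \(a_i\) without extra effort. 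The paper's computation is shorter and yields the explicit matrix \(H\) as a by-product; your argument is coordinate-free, explains \emph{why} the answer is a triangle inequality, and in effect extends the barycentre picture of Proposition \ref{prop:Dunkl} (stated there only for \(a_i\) of equal sign) to arbitrary signs in the three-line case. Both are complete proofs.
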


\begin{proof}
	There is a unique up to scale Hermitian inner product \(\inn_H\) on \(\C^2\) that makes \(A_1, A_2, A_3\) self-adjoint, it is given by the Hermitian matrix
	\footnote{The matrix \(H\) is obtained by solving a linear system in \(4\) variables (the entries of \(H\)) and \(3\) equations (that the columns of \(A_i\) are orthogonal with respect to \(\inn_H\) to \(L_i\)).}
	\begin{equation*}
	H = \begin{pmatrix}
	b_2(b_1+b_3) & -b_1 b_2 \\
	-b_1 b_2 & b_1 (b_2+b_3)
	\end{pmatrix} .
	\end{equation*}
	The connection \(\nabla\) is Dunkl if and only if 
	\begin{equation*}
	\det H = b_1 b_2 b_3 (b_1 + b_2 + b_3) > 0 .
	\end{equation*}
	Up to a constant positive factor, \(\det H\) is equal to
	\[
	D = (a_2+a_3-a_1)(a_1+a_3-a_2)(a_1+a_2-a_3)(a_1+a_2+a_3) 
	\]
	which is invariant under permutations of \((a_1, a_2, a_3)\) and under sign changes \(a_i \mapsto -a_i\). In particular, \(D(a_1, a_2, a_3)=D(|a_1|, |a_2|, |a_3|)\) and Equation \eqref{eq:moda} is equivalent to \(D>0\).
\end{proof}

Finally we compare the Dunkl versus the unitary condition for \(\nabla\).
We show that the \(2\) conditions agree if \(a_i \in (0,1)\) and we are in the elliptic case \(\sum_i a_i < 2\). However, the two conditions are genuinely different for arbitrary values of the parameters \(a_i\) as can be seeing by comparing Equations \eqref{eq:signature} and \eqref{eq:moda}. 

\begin{lemma}\label{lem:dunkunit}
If \(0<a_i<1\) and \(a_1+a_2+a_3<2\) then \(\nabla\) is unitary  if and only if it is Dunkl. 
\end{lemma}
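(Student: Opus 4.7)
The plan is to read off both conditions explicitly in terms of the parameters \(b_i = (a_j+a_k-a_i)/2\) introduced in \eqref{eq:ab} and compare. Under the standing hypotheses \(0<a_i<1\) and \(a_1+a_2+a_3<2\), each \(b_i\) lies in the open interval \((-1/2, 1)\) and \(b_1+b_2+b_3=(a_1+a_2+a_3)/2\) lies in \((0,1)\). In particular, \(b_i\) and \(\sum_i b_i\) are automatically non-integer (up to the measure-zero locus \(b_i=0\), which is precisely the boundary of the Dunkl condition and can be excluded), so Proposition \ref{prop:3lines} applies and gives the signature \((p,2-p)\) with \(p=\lfloor s\rfloor\), \(s=\sum_i\{b_i\}\). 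Unitary means \(p\in\{0,2\}\) (either \(H\) or \(-H\) is positive definite), while Dunkl means by Lemma \ref{lem:3dunkl} that all three strict triangle inequalities \(a_i<a_j+a_k\) hold, i.e. all \(b_i>0\).

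The core of the argument is a case distinction on the signs of the \(b_i\). If all \(b_i>0\), then \(\{b_i\}=b_i\) and hence \(s=b_1+b_2+b_3=(a_1+a_2+a_3)/2<1\), giving \(p=0\): the connection is unitary. If two or three of the \(b_i\) are negative, say \(b_1,b_2<0\), then adding \(a_1>a_2+a_3\) and \(a_2>a_1+a_3\) yields \(a_3<0\), contradicting positivity; so this case does not occur. The remaining possibility is that exactly one \(b_i\) is negative, say \(b_1<0\); then \(\{b_1\}=b_1+1\) while \(\{b_2\}=b_2\), \(\{b_3\}=b_3\), so
\[
s=1+b_1+b_2+b_3=1+\tfrac{1}{2}(a_1+a_2+a_3)\in[1,2),
\]
whence \(p=1\) and the invariant Hermitian form has indefinite signature \((1,1)\): the connection is not unitary.

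Comparing the two cases, \(\nabla\) is unitary precisely in the first case, i.e.\ precisely when all \(b_i>0\), which is the Dunkl condition. The main (very small) obstacle is just organising the case analysis cleanly and verifying that the hypothesis \(a_1+a_2+a_3<2\) is exactly what prevents \(s\) from reaching \(2\) in the "one-negative" case, thereby ruling out the possibility of a second unitary regime \(p=2\).
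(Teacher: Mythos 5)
Your proposal is correct and follows essentially the same route as the paper: apply the signature formula of Proposition \ref{prop:3lines} in terms of the \(b_i\), show that unitarity is equivalent to all \(b_i>0\), and match this with the Dunkl criterion of Lemma \ref{lem:3dunkl}. Your case analysis is in fact slightly more careful than the paper's, since you explicitly rule out two or three negative \(b_i\) (the paper's formula \(s=\sum_i b_i+1\) tacitly assumes exactly one is negative), but the argument is the same in substance.
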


\begin{proof}
	Note that \(2b_i = a_j+a_k-a_i\). If one of the \(b_i\)'s is negative then the term \(s\) in Equation \eqref{eq:signature} is equal to
	\begin{equation}\label{eq:sb}
	s = b_1 + b_2 + b_3 + 1 .	
	\end{equation}
	On the other hand, the sum \(\sum_i b_i = (1/2) \sum_i a_i \) belongs to the interval \((0,1)\) and Equation \eqref{eq:sb} implies that \(s \in (1,2)\). Therefore, if \(b_i<0\) for some \(i\) then the signature of the Hermitian form preserved by \(\nabla\) is \((1,1)\). We conclude that \(\nabla\) is unitary if and only if \(b_i>0\) for all \(i\).
	
	On the other hand, by Lemma \ref{lem:3dunkl} (or Proposition \ref{prop:Dunkl}) the connection \(\nabla\) is Dunkl if and only if \(a_i < a_j + a_k\) for all \(i\)  which is also equivalent to \(b_i >0\) for all \(i\).
\end{proof}


\subsection{\(B_2\)-arrangement.}
Consider the reflection arrangement made of the \(4\) lines 
\[
L_1=\{z=0\}, \hspace{2mm} L_2 =\{w=0\}, \hspace{2mm} L_3=\{w=z\}, \hspace{2mm} L_4=\{w=-z\} .
\]
Let \(a \in \R^*\),
we are interested in Dunkl connections with \(\tr A_i = a\) for all \(i\). 

Since the configuration of points \(0, 1, -1, \infty\) in \(\CP^1\) has zero barycentre, the standard Euclidean inner product gives us the Dunkl connection
\begin{equation}\label{eq:dunkdihed}
\nabla_a = d - a \cdot \sum_i A_i \frac{d \ell_i}{\ell_i} \hspace{4mm} \text{ with } 
\end{equation}
\[
A_1 = \begin{pmatrix}
1 & 0 \\
0 & 0
\end{pmatrix},
\hspace{2mm}
A_2 = \begin{pmatrix}
0 & 0 \\
0 & 1
\end{pmatrix},
\hspace{2mm}
A_3 = \frac{1}{2} \begin{pmatrix}
1 & -1 \\
-1 & 1
\end{pmatrix},
\hspace{2mm}
A_4 = \frac{1}{2} \begin{pmatrix}
1 & 1 \\
1 & 1
\end{pmatrix} .
\]

The main result of this section is the following.

\begin{proposition}\label{prop:dihedral}
	Suppose that \(a \notin \Z\). Then the Dunkl connection \(\nabla_a\) has unitary holonomy if and only if either \(a \in (0,1/2) + 2\Z\) or \(a \in (3/2,2) + 2\Z\).
\end{proposition}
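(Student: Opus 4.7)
The plan is to identify \(\nabla_a\) as the pullback, under the branched double cover \(F(z,w) = (z^2, w^2)\), of a standard \(3\)-line connection \(\tilde\nabla\) on the target with poles along \(\{Z=0\} \cup \{W=0\} \cup \{W=Z\}\), and then to apply Proposition~\ref{prop:3lines} there. Since \(F\) sends the \(B_2\)-arrangement onto these three lines with ramification of order two along \(\{z=0\}\) and \(\{w=0\}\), matching the residue trace at every pole of \(\nabla_a\) forces \(\tilde a_1 = \tilde a_2 = (1+a)/2\) at the ramified lines and \(\tilde a_3 = a\) at the unramified one: the naive doubling \(2\tilde a_i\) at a ramified line is offset by a shift \(-1\) coming from the gauge transformation \(dF\).

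To verify \(\nabla_a = F^*\tilde\nabla\) in the standard frame I would compute the connection matrix \(\omega = g^{-1}(F^*\tilde\omega)g - g^{-1}dg\) with \(g := dF = \mathrm{diag}(2z, 2w)\). Near \(\{z=0\}\) only the term \(2\tilde A_1\, dz/z\) in \(F^*\tilde\omega\) is singular; after conjugation by \(g\) and subtraction of the \(E_{11}\, dz/z\) contribution of \(g^{-1}dg\), the residue has trace \(2\tilde a_1 - 1 = a\) and kernel \(\{z=0\}\), matching \(A_1\) of \(\nabla_a\). The line \(\{w=0\}\) is symmetric. At the unramified lines \(\{w=\pm z\}\) the residues come directly from \(\tilde A_3 \bigl(d(w-z)/(w-z) + d(w+z)/(w+z)\bigr)\) and match \(aA_3\) and \(aA_4\).

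Once the identification is established, a \(\tilde\nabla\)-invariant positive definite Hermitian form on \(T_{F(x_0)}\C^2\) pulls back via \(dF_{x_0}\) to a \(\nabla_a\)-invariant positive definite form on \(T_{x_0}\C^2\). For the converse, Corollary~\ref{cor:irrhol} applied to \(\nabla_a\) gives irreducible holonomy whenever \(a \notin \tfrac{1}{2}\Z\), since the differences \(\sum_{i \in I} a_i - \sum_{i \notin I} a_i\) take values \(\pm 2a, \pm 4a\) and so are never positive even integers. Schur's lemma then implies that any nonzero \(\nabla_a\)-invariant Hermitian form is a scalar multiple of the pulled-back \(\tilde H\) and in particular has the same signature. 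Hence \(\nabla_a\) is unitary if and only if \(\tilde\nabla\) is.

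Finally, with \(b_1 = b_2 = a/2\) and \(b_3 = 1/2\), Proposition~\ref{prop:3lines} applied to \(\tilde\nabla\) (valid when \(a \notin \Z \cup (\Z + \tfrac{1}{2})\)) gives signature \((\lfloor s \rfloor, 2-\lfloor s \rfloor)\) with \(s = 2\{a/2\} + \tfrac{1}{2} \in [\tfrac{1}{2}, \tfrac{5}{2})\). Unitarity \(\lfloor s \rfloor \in \{0, 2\}\) translates to \(\{a/2\} \in [0, \tfrac{1}{4}) \cup [\tfrac{3}{4}, 1)\), equivalently \(a \in 2\Z + (0, \tfrac{1}{2})\) or \(a \in 2\Z + (\tfrac{3}{2}, 2)\). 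The half-integer boundary values \(a \in \Z + \tfrac{1}{2}\), where \(\sum b_i \in \Z\) and Proposition~\ref{prop:3lines} does not directly apply, need separate treatment; I expect that the essentially unique invariant Hermitian form degenerates at these points so that \(\nabla_a\) is non-unitary, consistent with the open intervals in the statement. The main obstacle is carrying out this boundary analysis cleanly and justifying the pullback identification through the singular gauge at the branch locus.
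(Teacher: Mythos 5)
Your proposal follows the paper's proof of Proposition \ref{prop:dihedral} essentially step for step: the same covering map \(F(z,w)=(z^2,w^2)\), the same pulled-back \(3\)-line connection with \(\tilde a_1=\tilde a_2=(1+a)/2\), \(\tilde a_3=a\) (this is Lemma \ref{lem:pullback}), and the same signature computation \(b_1=b_2=a/2\), \(b_3=1/2\), \(s=2\{a/2\}+1/2\) via Proposition \ref{prop:3lines} (this is Lemma \ref{lem:unitary}).

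Two comments on where you deviate. First, your use of Corollary \ref{cor:irrhol} plus Schur's lemma to transfer unitarity from \(\nabla_a\) back to \(\tilde\nabla\) is a legitimate addition which the paper leaves implicit; an alternative that needs no irreducibility hypothesis at all is to average a \(\nabla_a\)-invariant positive definite form over the four cosets of the index-\(4\) subgroup \(F_*\pi_1\), which produces a \(\tilde\nabla\)-invariant positive definite form. (Your worry about the ``singular gauge at the branch locus'' is minor: two meromorphic connections agreeing on the complement of the arrangement agree everywhere, so the identification only needs to be checked where \(G=\mathrm{diag}(2z,2w)\) is invertible.) Second, the boundary case \(a\in\Z+\tfrac12\) that you flag is a real loose end: there \(\sum_i b_i=a+\tfrac12\in\Z\), Proposition \ref{prop:3lines} does not apply, and a blind application of the signature formula would even return the ``unitary'' parity at \(a=3/2\) (where \(s=2\)), contradicting the claimed answer. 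The paper's own Lemma \ref{lem:unitary} passes over this silently, so you are not behind the written proof here, and only the ``if'' direction on the open intervals is used downstream (Lemma \ref{lem:posdef}), so the gap is harmless for Theorem \ref{thm:main}. If you do want to close it, note that at \(a=\tfrac12\) one has \(\sum_i a_i=2\), so by Remark \ref{rmk:flatherm} the Euler field is parallel and a rank-one degenerate form is preserved; a unitarizable holonomy fixing a vector would then be diagonal of order \(\le 2\), which is incompatible with the non-abelian holonomy of the flat metric with four cone angles \(\pi\) (a torus quotient). The remaining half-integers require a similar but separate analysis.
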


This proposition is an immediate consequence of the next two lemmas.

\begin{lemma}\label{lem:pullback}
	Let \(\tn\)  be the connection singular along \(3\) lines given by Equation \eqref{eq:3lines} with parameters 
	\begin{equation}\label{eq:ai}
		a_1 = a_2 = \frac{1+a}{2}, \hspace{2mm} a_3 = a .
	\end{equation}
	Then \(\nabla_a\) is equal to the pull-back of \(\tn\) by the map 
	 \begin{equation}
	 	F(z,w) = (z^2, w^2) .
	 \end{equation}
\end{lemma}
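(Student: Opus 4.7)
The plan is to verify $\nabla_a = F^*\tn$ directly, working in the standard frame $\p_z, \p_w$ on the source. The map $F$ has Jacobian $4zw$, so it is a local biholomorphism on $\C^2 \setminus (L_1 \cup L_2)$, and pulling back a connection on the tangent bundle makes sense there in the usual way; the verification then extends to all of $\C^2$ by continuity of the meromorphic coefficients.

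First I would pull back the three logarithmic $1$-forms appearing in $\tn$. With $\ell_3 = w - z$ and $\ell_4 = w + z$, the identities $F^*\tilde z = z^2$, $F^*\tilde w = w^2$, and $F^*(\tilde w - \tilde z) = \ell_3 \ell_4$ give
\[
F^*\frac{d\tilde z}{\tilde z} = \frac{2\, dz}{z}, \quad F^*\frac{d\tilde w}{\tilde w} = \frac{2\, dw}{w}, \quad F^*\frac{d(\tilde w - \tilde z)}{\tilde w - \tilde z} = \frac{d\ell_3}{\ell_3} + \frac{d\ell_4}{\ell_4}.
\]
Next I would change frames from $F^*(\p_{\tilde z}, \p_{\tilde w})$ to $(\p_z, \p_w)$, using $dF(\p_z) = 2z\,\p_{\tilde z}$ and $dF(\p_w) = 2w\,\p_{\tilde w}$, encoded in the diagonal matrix $N = \mathrm{diag}(2z, 2w)$. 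The new connection $1$-form then reads
\[
\omega_{\text{new}} = N^{-1}(F^*\omega)\, N - N^{-1}\, dN, \qquad N^{-1}dN = \mathrm{diag}(dz/z,\, dw/w),
\]
and the extra term $-N^{-1}dN$ is precisely what corrects the trace $\tilde a_i$ at $\tilde L_i$ to $2\tilde a_i - 1 = a$ on the source for $i = 1, 2$.

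It then remains to match $\omega_{\text{new}}$ against $\omega_a = a\sum_i A_i\, d\ell_i/\ell_i$ entrywise. Substituting $a_1 = a_2 = (1+a)/2$ and $a_3 = a$ in the explicit formulae, the diagonal check is immediate once the $-N^{-1}dN$ shift is accounted for. The off-diagonal check reduces to the single algebraic identity
\[
\frac{dw - dz}{w - z} + \frac{dw + dz}{w + z} = \frac{2(w\, dw - z\, dz)}{w^2 - z^2},
\]
which recombines the conjugated $\tilde A_3$ contribution with the off-diagonal parts of $N^{-1}(2\tilde A_1\, dz/z + 2\tilde A_2\, dw/w)N$ into clean multiples of $d\ell_3/\ell_3$ and $d\ell_4/\ell_4$, matching the $(1,2)$ and $(2,1)$ entries of $\omega_a$.

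The main obstacle is the bookkeeping: the off-diagonal entries of $\omega_{\text{new}}$ a priori carry spurious poles along $L_1 \cup L_2$ from the conjugation by $N$, and one has to verify that they cancel so that only the expected poles along $L_3 \cup L_4$ remain. A conceptually cleaner shortcut, which I would include as a sanity check, is to first observe that $\nabla_a$ is invariant under the Klein four-group action $(z, w) \mapsto (\pm z, \pm w)$ — immediate from the explicit formulae, since $A_1, A_2$ are diagonal and the sign involutions exchange $A_3$ with $A_4$ — and that this group generates the deck transformations of $F$. Hence $\nabla_a$ descends through $F$ to a standard connection $\nabla'$ on the target with simple poles along $\tilde L_1, \tilde L_2, \tilde L_3$. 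Its residue traces are forced to be $(1+a)/2, (1+a)/2, a$ by the ramification computation (trace $2\tilde a_i - 1 = a$ at the branched lines, and trace $a$ at the unramified one), and by the uniqueness of standard connections on three lines with prescribed residue traces (the $n - 3 = 0$ dimensional case established earlier), $\nabla'$ coincides with $\tn$.
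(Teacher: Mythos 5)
Your main argument is exactly the paper's proof: pull back the logarithmic forms via \(F^*(dx/x)=2\,dz/z\) and \(F^*\bigl(d(x-y)/(x-y)\bigr)=d\ell_3/\ell_3+d\ell_4/\ell_4\), gauge-transform by the Jacobian \(\mathrm{diag}(2z,2w)\), and match entries against \(\nabla_a\); the identity you isolate for the off-diagonal terms is precisely the content of the paper's ``straightforward check,'' and it does close up with no spurious poles along \(L_1\cup L_2\). Your alternative ``sanity check'' is in fact a complete and arguably cleaner second proof: invariance of \(\nabla_a\) under \((z,w)\mapsto(\pm z,\pm w)\) gives descent to a connection with simple poles and scalar invariance (hence standard, by the remark following Equation \eqref{eq:standard1}), the ramification forces the residue traces \((1+a)/2,(1+a)/2,a\), and the \(n-3=0\) uniqueness statement identifies the descent with \(\tn\) — this trades the entrywise bookkeeping for the uniqueness lemma already established in Section \ref{sec:holonomy}.
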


\begin{proof}
	Use linear coordinates \(x,y\) on the target so \(\tn = d - \widetilde{\Omega}\) with \(\widetilde{\Omega}\) equal to
	\[
	\frac{1}{2} \begin{pmatrix}
	1+a & 0 \\
	a & 0
	\end{pmatrix} \frac{dx}{x}
	+
	\frac{1}{2} \begin{pmatrix}
	0 & a \\
	0 & 1+a
	\end{pmatrix} \frac{dy}{y}
	+
	\frac{1}{2} \begin{pmatrix}
	a & -a \\
	-a & a
	\end{pmatrix} \frac{d(x-y)}{x-y} .
	\]
	The pull-back \(F^* \widetilde{\Omega}\) is obtained by replacing \(F^*(dx/x) = 2 dz/z\) (and similarly for the \(y\) variable) together with
	\[
	F^* \left( \frac{d(x-y)}{x-y}\right) =  \frac{d(z-w)}{z-w} + \frac{d(z+w)}{z+w} .
	\]
	The pull-back connection \(\nabla = F^* \tn\) in the frame \(\p_z, \p_w\) is
	\(\nabla = d - \Omega\) where
	\[
	\Omega = G \cdot F^*\widetilde{\Omega} \cdot G^{-1} - dG \cdot G^{-1} \hspace{2mm} \text{ with } \hspace{2mm} 	G = \begin{pmatrix}
	2z & 0 \\
	0 & 2w
	\end{pmatrix} 
	\]	
	(here the gauge transform \(G\) is the Jacobian \(DF\)).
	A straightforward check shows that \(\nabla\) is equal to the Dunkl connection \(\nabla_a\) given by Equation \eqref{eq:dunkdihed}.
\end{proof}

\begin{lemma}\label{lem:unitary}
	Suppose that \(a \notin \Z\). Then the connection \(\tn\) is unitary if and only if either \(a \in (0,1/2) + 2\Z\) or \(a \in (3/2,2) + 2\Z\).
\end{lemma}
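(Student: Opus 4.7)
The plan is to reduce directly to Proposition \ref{prop:3lines} applied to $\widetilde{\nabla}$ and read off the signature of the invariant Hermitian form.

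First, from the given residue traces $a_1 = a_2 = (1+a)/2$ and $a_3 = a$, compute the auxiliary parameters $b_i = (a_j + a_k - a_i)/2$ appearing in Proposition \ref{prop:3lines}. A one-line substitution yields
\[
b_1 = b_2 = a/2, \qquad b_3 = 1/2.
\]
Under the standing assumption $a \notin \Z$, all $a_i$ are non-integer and $b_1, b_2, b_3 \notin \Z$; the only remaining hypothesis $b_1 + b_2 + b_3 = a + \tfrac{1}{2} \notin \Z$ amounts to the extra condition $a \notin \tfrac{1}{2} + \Z$, which I will impose for the main argument and then deal with the excluded boundary values separately.

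Granting this, Proposition \ref{prop:3lines} guarantees a non-degenerate invariant Hermitian form of signature $(p, 2-p)$ with $p = \lfloor s \rfloor$, where
\[
s \;=\; \{b_1\} + \{b_2\} + \{b_3\} \;=\; 2\{a/2\} + \tfrac{1}{2} \;\in\; (\tfrac{1}{2}, \tfrac{5}{2}) \setminus \Z.
\]
Unitarity of $\widetilde{\nabla}$ is equivalent to this invariant form being definite, i.e.\ $p \in \{0, 2\}$, equivalently $s \in (\tfrac{1}{2}, 1) \cup (2, \tfrac{5}{2})$. Solving for $\{a/2\}$ gives $\{a/2\} \in (0, \tfrac{1}{4}) \cup (\tfrac{3}{4}, 1)$, which in turn translates to $a \bmod 2 \in (0, \tfrac{1}{2}) \cup (\tfrac{3}{2}, 2)$, matching the stated intervals (the endpoint $a \equiv 0 \pmod 2$ is already excluded by $a \notin \Z$).

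It remains to exclude the boundary values $a \in \tfrac{1}{2} + \Z$, which lie in neither of the claimed intervals. At these points $c = (a_1+a_2+a_3)/2 = a + \tfrac{1}{2}$ is an integer, so in the notation of the proof of Proposition \ref{prop:3lines} the generators $R = M_1$ and $S = M_2^{-1}$ share the eigenvalue $1$ and Lemma \ref{lem:hermform} fails to apply. However these are precisely the transition values between signatures $(0,2)/(2,0)$ and $(1,1)$ on the nearby open set, so the invariant Hermitian form (unique up to scale in a punctured neighborhood) degenerates in the limit and no positive definite invariant form can exist there. The hard part of the proof, therefore, is this last step: handling the finitely-many-mod-$2$ boundary values where Proposition \ref{prop:3lines} does not directly apply, either by a careful continuity/degeneration argument or by a direct calculation with the explicit residue matrices showing that the holonomy at these values is non-abelian and cannot be simultaneously unitarized.
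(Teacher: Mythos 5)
Your main computation is exactly the paper's proof: from \(a_1=a_2=(1+a)/2\), \(a_3=a\) one gets \(b_1=b_2=a/2\), \(b_3=1/2\), hence \(s=2\{a/2\}+\tfrac12\), and Equation \eqref{eq:signature} then gives definiteness precisely for \(a\bmod 2\in(0,\tfrac12)\cup(\tfrac32,2)\). Where you go beyond the paper is in observing that Proposition \ref{prop:3lines} also requires \(b_1+b_2+b_3=a+\tfrac12\notin\Z\), so the values \(a\in\tfrac12+\Z\) (which sit on the boundary of the claimed intervals and are \emph{not} excluded by \(a\notin\Z\)) need a separate argument for the ``only if'' direction. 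That is a legitimate point: the paper's one-paragraph proof is silent on it and applies \eqref{eq:signature} without checking this hypothesis.

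However, the continuity/degeneration argument you sketch for those excluded values does not close the gap. The space of holonomy-invariant Hermitian forms is only upper semicontinuous in the parameter: it can jump up in dimension at special values, so the fact that the unique-up-to-scale invariant form on a punctured neighbourhood degenerates in the limit does not rule out a \emph{new}, possibly definite, invariant form appearing exactly at \(a\in\tfrac12+\Z\). Indeed something does appear: at \(a=\tfrac12\) one has \(a_1+a_2+a_3=2\), so by Remark \ref{rmk:flatherm} the connection preserves a non-zero \emph{degenerate} positive semi-definite Hermitian form whose kernel is spanned by the (now parallel) Euler field. So the degeneration you invoke genuinely occurs, and the remaining question --- whether a definite invariant form also exists there --- is exactly what your argument leaves open. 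Your second suggestion is the one that works: at \(a=\tfrac12\) the holonomy preserves the line \(\C\cdot e\), and a reducible representation that also preserves a definite form is simultaneously diagonalizable, hence abelian; one then checks directly (e.g.\ via the associated flat cone metric on \(\CP^1\) with angles \(\pi/2,\pi/2,\pi\), whose holonomy consists of rotations about distinct points) that the holonomy is non-abelian, so \(\tn\) is not unitary there. A similar explicit check is needed at the other classes mod \(2\) in \(\tfrac12+\Z\), where the degeneration mechanism (shared eigenvalue \(1\) of \(R\) and \(S\) in Lemma \ref{lem:hermform}) is the same but the Euler field is no longer parallel. As it stands, this sub-case is asserted rather than proved in your write-up --- though, to be fair, it is not proved in the paper either.
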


\begin{proof}
	Let \(2b_i = a_j + a_k - a_i\) where \(a_i\) are given by Equation \eqref{eq:ai}. Then
	\[
	b_1=b_2=a/2, \hspace{2mm} b_3=1/2
	\]
	and the sum \(s=\sum_i \{b_i\}\) is equal to the \((2\Z)\)-periodic function
	\begin{equation}
		s = 2 \{a/2\} + 1/2 .
	\end{equation}
	If \(a \in (0,2)\) then \(s\) is \(<1\) (or \(>2\)) precisely when \(a \in (0,1/2)\) (or \(a \in (3/2,2)\)). The lemma now follows from Equation \eqref{eq:signature} for the signature of the Hermitian form preserved by \(\tn\).
\end{proof}

\begin{proof}[Proof of Proposition \ref{prop:dihedral}]
	By Lemma \ref{lem:pullback} we have \(\nabla_a = F^* \tn\). By Lemma \ref{lem:unitary} the Dunkl connection \(\nabla_a\) is unitary if and only if
	\(a \in (0,1/2) + 2\Z\) or \(a \in (3/2,2) + 2\Z\). 
\end{proof}

\section{Proof of Theorem \ref{thm:main}}\label{sec:pf}
In Section \ref{sec:precise} we state Theorem \ref{thm:main} in its precise version: \(Z\) is a proper analytic subset of \(\m \times \R\), where
\(Z\) is the set of pairs \((\lambda, a)\) such that the Dunkl connection \(\nd_{\lambda, a}\) (with residue traces \(=a\) and simple poles at \(\lambda\)) preserves a non-zero Hermitian form.
In Section \ref{sec:afam} we show that \(Z\) is an analytic subset of \(\m \times \R\). In Section \ref{sec:ops} we show that if \(Z = \m \times \R\) then, for suitable values of \(a\), the Dunkl connections \(\nd_{\lambda, a}\) have unitary holonomy for all \(\lambda \in \m\). 
Sections \ref{sec:k4} and \ref{sec:top} contain preliminary material on the Klein four-group and elementary topology.
The contradiction argument that proves the main theorem is carried out in Section \ref{sec:pfthmp}. Finally, in Section \ref{sec:extension} we extend our results to the general case of any number lines and possible different residue traces.

\subsection{Precise statement.}\label{sec:precise}
Let \(\m = \CP^1 \setminus \{0,1,\infty\}\). To each \(\lambda \in \m\) we associate the configuration of \(4\) lines
\[
L_1 = \{z=0\}, \hspace{1mm} L_2 = \{w=0\}, \hspace{1mm} L_3 = \{w=z\}, \hspace{1mm} L_4 = \{z=\lambda w\} .
\]

Given \(\lambda \in \m\) we denote by \(H(\lambda)\) the unique positive definite Hermitian matrix of unit determinant such that \(\inn_{H(\lambda)}\) is a Dunkl inner product adapted to \((L_i, a_i=1)\) (as given by Proposition \ref{prop:Dunkl}).
For \(a \in \R\) and \(\lambda \in \m\) we let \(\nd_{\lambda, a}\) be the Dunkl connection 
\begin{equation}\label{eq:dunkfamily}
\nd_{\lambda, a} = d - a \sum_i A_i(\lambda) \frac{d\ell_i}{\ell_i}
\end{equation}
where \(A_i(\lambda)\) is the orthogonal projection to \(L_i^{\perp}\) with respect to \({H(\lambda)}\). 

The precise statement of Theorem \ref{thm:main} is the following.
\begin{manualtheorem}{1'}\label{thm:main2}
	Let \(Z\) be the set of pairs \((\lambda, a)\) such that \(\nd_{\lambda, a}\) preserves a non-zero Hermitian form. Then \(Z\) is a proper analytic subset of \(\m \times \R\). In other words, \(Z\) is locally the zero set of real analytic functions which are not identically zero. In particular, the set \(Z\) is closed and has measure zero.
\end{manualtheorem}

\begin{remark}
	Note that \( \m \times \{0\} \subset Z\) because when \(a=0\) then \(\nd_{\lambda, a}\) is the usual Levi-Civita connection of the flat metric on \(\C^2\). Remark \ref{rmk:flatherm} also implies that \((\lambda, a) \in Z\) for all \(\lambda\) whenever \(a=1/2\). 
\end{remark}

\subsection{Analytic families of connections.}\label{sec:afam}
The main result here is Lemma \ref{lem:locan} which shows that \(Z\) is an analytic subset. Its proof relies on the results from Section \ref{sec:Dunkl}.
We begin with a general elementary fact about persistence of fixed subspaces by analytic families of representations.

\begin{lemma}\label{genericparallel} Let $G$ be a finitely generated group and let $\rho_t$ for \(t \in \R\)  be a real analytic family of representations $\rho_t: G\to GL(n, \R)$. Suppose that for $t\in (0,1)$ the subspace $F_t\subset \R^n$ fixed by $\rho_t$ is non-zero. Then it is non-zero for all $t$.
\end{lemma}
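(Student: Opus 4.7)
The plan is to express the condition \(F_t \neq 0\) as the simultaneous vanishing of finitely many real analytic functions of \(t\), and then invoke the identity theorem for real analytic functions on \(\R\).

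First I would fix a finite generating set \(g_1, \ldots, g_k\) of \(G\) and form the block matrix
\[
M(t) = \begin{pmatrix} \rho_t(g_1) - \Id \\ \vdots \\ \rho_t(g_k) - \Id \end{pmatrix} \in \mathrm{Mat}_{kn \times n}(\R),
\]
whose entries depend real analytically on \(t\). A vector \(v \in \R^n\) lies in \(F_t\) if and only if \(\rho_t(g_i) v = v\) for each generator \(g_i\), which is exactly the condition \(M(t) v = 0\). Hence \(F_t = \ker M(t)\) and
\[
F_t \neq 0 \quad \Longleftrightarrow \quad \mathrm{rank}\, M(t) \leq n - 1 \quad \Longleftrightarrow \quad \text{every } n \times n \text{ minor of } M(t) \text{ vanishes.}
\]

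Next, each such \(n \times n\) minor is a polynomial in the entries of \(M(t)\), hence a real analytic function of \(t \in \R\). By hypothesis all these minors vanish on the open interval \((0,1)\). Since a real analytic function on \(\R\) that vanishes on a non-empty open set vanishes identically, each minor vanishes for all \(t \in \R\). Consequently \(\mathrm{rank}\, M(t) \leq n-1\) for every \(t\), so \(F_t = \ker M(t) \neq 0\) for every \(t\).

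There is no real obstacle here beyond recognising that the fixed subspace of a representation is cut out by a system of linear equations whose coefficients inherit the analytic dependence on the parameter, so the only tool needed is the elementary identity principle for real analytic functions.
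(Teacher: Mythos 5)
Your proof is correct and follows essentially the same route as the paper: both identify $F_t$ with the kernel of a matrix assembled analytically from the generators and then apply the identity theorem for real analytic functions to determinantal quantities. The paper packages the condition into the single analytic function $\det Q_t$ with $Q_t=\sum_i(\rho_t(g_i)^*-\Id)(\rho_t(g_i)-\Id)$, which is just the Gram matrix $M(t)^*M(t)$ of your stacked matrix, so by Cauchy--Binet the vanishing of $\det Q_t$ is equivalent to the vanishing of all your $n\times n$ minors.
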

\begin{proof} Let $g_1,\ldots, g_k$ be the generators of $G$. Then for any $t$ the space $F_t$ fixed by $\rho_t$ coincides with the kernel of the following non-negative definite self-adjoint operator:
	\begin{equation}\label{eq:Qt}
		Q_t=\sum_i(\rho_t(g_i)^*-1)(\rho_t(g_i)-1).
	\end{equation}
	Since $\det(Q_t)=0$ for $t\in (0,1)$ and $Q_t$ is real analytic, we conclude $\det(Q_t)=0$ for all $t$. So $F_t$ is non-zero for all $t$.
\end{proof}

\begin{corollary} \label{cor:persistentHermitian}
	Consider an analytic path of standard connections 
	\[
	\nabla_t = d - \sum_i A_i(t) \frac{d\ell_i}{\ell_i}
	\]	
	meaning that the entries of the matrices \(A_i(t)\) are analytic functions of \(t\).	Then the subset of $t\in \R$ such that the connection $\nabla_t$ preserves a non-zero Hermitian form is either $\mathbb R$ or a discrete subset of $\mathbb R$.
\end{corollary}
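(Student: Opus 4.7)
The plan is to reduce the corollary to Lemma \ref{genericparallel}, applied to the action of the fundamental group $G = \pi_1(\C^2 \setminus \cup_i L_i)$ on the $4$-dimensional real vector space $\mathcal{H}$ of Hermitian forms on $T_{x_0}\C^2$. By definition, $\nabla_t$ preserves a non-zero Hermitian form if and only if the induced representation $\tilde{\rho}_t: G \to GL(\mathcal{H})$, obtained from the holonomy representation $\rho_t: G \to GL(T_{x_0}\C^2)$ via the action $H \mapsto \rho_t(\gamma)^* H \rho_t(\gamma)$, has non-trivial fixed subspace $F_t \subset \mathcal{H}$.

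The crucial input is the real-analytic dependence of $\tilde{\rho}_t$ on $t$. Using the generating loops $\gamma_1, \ldots, \gamma_n$ of Lemma \ref{lem:fundgroup} contained in a fixed complex line $C$ through the basepoint that meets each $L_i$ in exactly one point (as in the proof of Lemma \ref{lem:mon}), the holonomy of $\nabla_t$ along $\gamma_i$ is the parallel transport of the Fuchsian system
\[
\frac{du}{ds} = \left( \sum_i \frac{A_i(t)}{s - \xi_i} \right) u
\]
on $\C$, with coordinate $s$ on $C$. The poles $\xi_i$ are independent of $t$ because the lines $L_i$ and the slice $C$ are; the residues $A_i(t)$ are real-analytic in $t$ by hypothesis. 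The classical theorem on analytic dependence of solutions of linear ODEs on parameters (applied along the compact loops $\gamma_i$, which stay away from the $\xi_i$) then guarantees that $\rho_t(\gamma_i) \in GL(2, \C)$ depends real-analytically on $t$. Since $M \mapsto (H \mapsto M^* H M)$ is a real-analytic map from $GL(2, \C)$ to $GL(\mathcal{H})$, we obtain a real-analytic family of representations $\tilde{\rho}_t: G \to GL(\mathcal{H}) \cong GL(4, \R)$.

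With this in hand, I apply Lemma \ref{genericparallel} essentially verbatim. Form the non-negative self-adjoint operator
\[
Q_t = \sum_{i=1}^{n} (\tilde{\rho}_t(\gamma_i)^* - 1)(\tilde{\rho}_t(\gamma_i) - 1)
\]
on $\mathcal{H}$; then $F_t = \ker Q_t$, so $F_t \neq 0$ if and only if the real-analytic scalar function $t \mapsto \det Q_t$ vanishes at $t$. Since a real-analytic function on $\R$ either vanishes identically or has discrete zero set, the set of $t$ for which $\nabla_t$ preserves a non-zero Hermitian form is either all of $\R$ or a discrete subset of $\R$.

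The only subtle point is the real-analytic dependence of the holonomy on the parameter $t$. Although classical, it is essential that the singular points $\xi_i$ of the restricted Fuchsian system do not move with $t$, which is automatic in our setup since both the lines $L_i$ and the transverse slice $C$ are fixed. Given this, the rest of the argument is a direct application of Lemma \ref{genericparallel} together with the fact that zero sets of non-trivial real-analytic functions on $\R$ are discrete.
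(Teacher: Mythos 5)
Your proof is correct and follows essentially the same route as the paper: reduce to Lemma \ref{genericparallel} via the induced real-analytic family of representations on the $4$-dimensional real space of Hermitian forms, and conclude from the analyticity of $\det Q_t$. The only difference is that you spell out the analytic dependence of the holonomy on $t$ (via the restricted Fuchsian system with fixed poles), which the paper takes as given.
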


\begin{proof}
	Let \(G\) be the fundamental group of the arrangement complement.
	We have an analytic family of holonomy representations \(\tilde{\rho}_t: G \to GL(2, \C)\). Recall that \(A \in GL(2, \C)\) acts on the space of Hermitian forms in \(\C^2\) by pull-back \(A \cdot \inn_H = \langle A \cdot, A \cdot \rangle_H\) giving rise to a homomorphism \(GL(2, \C) \to GL(4, \R)\). By composition we obtain an analytic family of representations \(\rho_t : G \to GL(4, \R)\) and \(\nabla_t\) preserves a non-zero Hermitian form if and only if there is a non-zero subspace \(F_t \subset \R^4\) fixed by \(\rho_t\).
\end{proof}

\begin{remark}
	A simple but important observation is that if \(\nabla = d - \sum_i A_i d\ell_i / \ell_i\) is a standard (or Dunkl) connection then 
	\begin{equation}\label{eq:scalepath}
	\nabla_t = d - t \cdot \sum_i  A_i \frac{d\ell_i}{\ell_i}	
	\end{equation}
	is also standard (or Dunkl)\footnote{Note that the residue traces of \(\nabla_t\) are \(t a_i\) where \(a_i =  \tr A_i\).}  for every \(t \in \R\). In particular, Corollary \ref{cor:persistentHermitian} applies here. This family of connections \(\nabla_t\) is studied in \cite{CHL} in the Dunkl case for reflection arrangements.
\end{remark}

Now we go back to our family of Dunkl connections given by Equation \eqref{eq:dunkfamily}.

\begin{lemma}\label{lem:analyticdependence}
	The entries of  \(A_i(\lambda)\) are analytic functions of \(\lambda \in \m\).
\end{lemma}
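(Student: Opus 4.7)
The plan is to deduce the analyticity of $A_i(\lambda)$ from the variational characterization of the Dunkl inner product established in Section \ref{se:pfdun} together with the analytic implicit function theorem.

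First I would note that the lines depend analytically (in fact only $L_4(\lambda) = \{z = \lambda w\}$ varies with $\lambda$, the other three being fixed), and hence by Equation \eqref{eq:sterproj} the points $\mathbf{x}(L_i(\lambda)) \in S^2$ at infinity of the Poincaré ball model depend real analytically on $\lambda \in \m$. Plugging these into the explicit Busemann formula \eqref{eq:buseman}, the function
\[
F_\lambda(y) \;=\; \sum_{i=1}^{4} b_{\mathbf{x}(L_i(\lambda))}(y)
\]
is jointly real analytic in $(y,\lambda) \in \h^3 \times \m$. By Proposition \ref{prop:Dunkl} (applied with all $a_i = 1$, which trivially satisfies the balancing condition \eqref{eq:stability}), for every $\lambda \in \m$ the function $F_\lambda$ has a unique minimum $y(\lambda) \in \h^3$, and by Lemma \ref{lem:convex} the Hessian $\Hess F_\lambda$ is strictly positive definite throughout $\h^3$.

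Now I would invoke the analytic implicit function theorem applied to the equation $\nabla F_\lambda(y) = 0$ on $\h^3 \times \m$. The partial derivative with respect to $y$ is precisely $\Hess F_\lambda(y(\lambda))$, which is invertible by strict convexity, so the unique solution $\lambda \mapsto y(\lambda)$ is a real analytic map. Under the identification of $\h^3$ with the set of unit determinant positive definite Hermitian $(2\times2)$-matrices, this yields real analytic dependence of the matrix $H(\lambda)$ on $\lambda$.

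Finally, I would observe that the orthogonal projection $A_i(\lambda)$ onto $L_i(\lambda)^{\perp}$ relative to $\inn_{H(\lambda)}$ can be written as an explicit rational expression in the entries of $H(\lambda)$ and any analytically varying basis vector $v_i(\lambda)$ of $L_i(\lambda)$, e.g.\ via the formula
\[
A_i(\lambda) \;=\; \Id - \frac{v_i(\lambda)\, v_i(\lambda)^{*} H(\lambda)}{v_i(\lambda)^{*} H(\lambda)\, v_i(\lambda)},
\]
whose denominator is nonzero because $H(\lambda)$ is positive definite. Hence the entries of $A_i(\lambda)$ are real analytic functions of $\lambda$, as required. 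The only substantive point is the application of the analytic implicit function theorem, which is licensed by strict convexity; everything else is routine manipulation of the explicit formulas.
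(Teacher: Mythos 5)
Your proposal is correct and follows essentially the same route as the paper: both reduce the problem to the analytic dependence of the minimizer $H(\lambda)$ of the convex functional $F(\lambda,\cdot)=\sum_i b_{\bx(L_i(\lambda))}$, using the explicit analytic Busemann formula \eqref{eq:buseman}, the non-degeneracy of the Hessian from Lemma \ref{lem:convex}, and the analytic implicit function theorem applied to $\nabla_y F=0$. Your closing explicit rational formula for the $H$-orthogonal projection is a detail the paper leaves implicit, and it is stated correctly.
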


\begin{proof}
	The matrix \(A_i(\lambda)\) is the orthogonal projection to \(L_i^{\perp}\) with respect to the Dunkl inner product \(H(\lambda) \in \h^3\), so it is enough to show that \(H(\lambda)\) depends analytically on \(\lambda\).
	
	The Busemann functions \(b_x(y)\) are analytic\footnote{This is evident from Equation \eqref{eq:buseman}.} in \((x, y) \in S^2 \times \h^3\).
	The point \(H(\lambda) \in \h^3\) is the unique critical point of \(F(\lambda, \cdot): \h^3 \to \R\) given by
	\[
	F(\lambda, y) = b_S(y) + b_E(y) + b_N(y) + b_{\bx(\lambda)} (y)
	\]
	where \(S = (-1,0,0)\), \(E=(0,1,0)\), \(N = (1, 0, 0)\) are the points in \(S^2\) corresponding (as in Lemma \ref{lem:sphere}) to the lines \(L_1, L_2, L_3\) and \(\bx(\lambda)\) is the image of \(L_4\) given by stereographic projection (Equation \eqref{eq:sterproj}) which is analytic. Therefore \(F\) is analytic in \((\lambda, y) \in \m \times \h^3\). The point \(H(\lambda)\) is the unique solution of \(\nabla_y F (\lambda, H(\lambda))=0\). By Lemma \ref{lem:convex} the Hessian of \(F\) is non-degenerate and the statement follows from the implicit function theorem for analytic functions.
\end{proof}

As a consequence of analyticity and the persistence principle of Lemma \ref{genericparallel} we have the following.

\begin{lemma}\label{lem:allornoth}
	If \(\nd_{\lambda, a}\) preserves a non-zero Hermitian form for all \(\lambda\) in some open set \(U \subset \m\) and all \(a\) in some open interval \(I \subset \R\). Then \(\nd_{\lambda, a}\) preserves a non-zero Hermitian form for \emph{all} \(\lambda \in \m\) and \emph{all} \(a \in \R\). 
\end{lemma}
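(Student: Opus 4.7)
The plan is to apply the persistence principle from Corollary \ref{cor:persistentHermitian} twice, once in the \(a\)-direction and once in the \(\lambda\)-direction, chaining real-analytic paths of standard connections through \(\m \times \R\). The two ingredients that make this work are already in place: the affine dependence of \(\nd_{\lambda, a}\) on \(a\), and the real-analytic dependence of \(A_i(\lambda)\) on \(\lambda\) given by Lemma \ref{lem:analyticdependence}.

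First I would fix \(\lambda \in U\) and look at the one-parameter family \(a \mapsto \nd_{\lambda, a} = d - a\sum_i A_i(\lambda)\,d\ell_i/\ell_i\). Its residue matrices are linear (hence real-analytic) in \(a\), so it is a real-analytic path of standard connections. By hypothesis, a non-zero Hermitian form is preserved for every \(a \in I\). Since \(I\) is an open interval, Corollary \ref{cor:persistentHermitian} forces the set of such \(a\) to be all of \(\R\). This upgrades the hypothesis from \(U \times I\) to \(U \times \R\).

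Next, fix any \(a \in \R\) and any target \(\lambda_0 \in \m\). Since \(\m = \CP^1 \setminus \{0,1,\infty\}\) is a connected real-analytic manifold, I can choose a real-analytic path \(\gamma : \R \to \m\) with \(\gamma(0) = \lambda_0\) and with \(\gamma(t_1) \in U\) for some \(t_1 \in \R\) (for instance, a suitable polynomial curve in an affine chart that avoids the three punctures). By Lemma \ref{lem:analyticdependence}, the matrices \(A_i(\gamma(t))\) depend real-analytically on \(t\), so \(t \mapsto \nd_{\gamma(t), a}\) is a real-analytic family of standard connections. By the previous step, a non-zero Hermitian form is preserved on the non-empty open set \(\gamma^{-1}(U)\), so Corollary \ref{cor:persistentHermitian} again forces this to hold for every \(t \in \R\), and in particular at \(t = 0\). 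Hence \(\nd_{\lambda_0, a}\) preserves a non-zero Hermitian form, completing the proof.

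There is essentially no obstacle beyond verifying that one can join \(\lambda_0\) to a point of \(U\) by a real-analytic path inside \(\m\), which is routine since \(\m\) is a connected real-analytic manifold. The lemma is therefore a formal consequence of the persistence principle combined with the analytic dependence already established.
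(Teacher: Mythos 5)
Your proof is correct and follows essentially the same route as the paper: the paper takes a single real-analytic path in \(\m\times\R\) through the target point that meets \(U\times I\) and applies Corollary \ref{cor:persistentHermitian} once, whereas you chain two axis-aligned analytic paths (first in \(a\), then in \(\lambda\)) and apply the corollary twice. The difference is purely organizational; both arguments rest on the same persistence principle and on the analytic dependence from Lemma \ref{lem:analyticdependence}.
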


\begin{proof}
	Let \((\lambda, a) \in \m \times \R\). Take an analytic path \(\R \to \m \times \R\) that contains \((\lambda, a)\) in its interior and intersects \(U\times I\). The statement now follows by from Corollary \ref{cor:persistentHermitian}.
\end{proof}

Finally, we adapt the argument in Lemma \ref{genericparallel} to obtain the following.

\begin{lemma}\label{lem:locan}
	Let \(\lambda_0 \in \m\). Then we can find an open neighbourhood \(U \subset \m\) of \(\lambda_0\) such that 
	\[
	Z \cap (U \times \R) = \{f=0\}
	\]
	where \(f: U \times \R \to \R\) is a real analytic function. I.e., \(Z\) is an analytic subset.
\end{lemma}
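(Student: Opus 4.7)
The plan is to run the same ``persistence of fixed subspaces'' argument as in Lemma \ref{genericparallel} and Corollary \ref{cor:persistentHermitian}, but now with analytic dependence on the two-parameter family $(\lambda,a) \in U \times \R$ rather than a one-parameter family. The key inputs are Lemma \ref{lem:analyticdependence}, which gives analyticity of $A_i(\lambda)$, and the standard fact that solutions of linear ODEs depend analytically on parameters that enter the coefficients analytically.

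First, fix $\lambda_0 \in \m$ and choose a small open neighbourhood $U$ together with a basepoint $x_0 \in \C^2$ lying outside the arrangements $\cup_i L_i(\lambda)$ for every $\lambda \in U$. Because the lines $L_i(\lambda)$ vary analytically in $\lambda$, one can produce an analytic family of piecewise-linear loops $\gamma_i(\lambda)$ based at $x_0$, each representing the canonical generator of $\pi_1(\C^2 \setminus \cup_j L_j(\lambda))$ around $L_i(\lambda)$, in the sense of Lemma \ref{lem:fundgroup}. Parallel transport of $\nd_{\lambda,a}$ along $\gamma_i(\lambda)$ is obtained by solving the linear matrix ODE whose coefficients depend analytically on $(\lambda,a)$ (by Lemma \ref{lem:analyticdependence}), and hence the holonomy matrices
\[
M_i(\lambda,a) \in GL(2,\C), \qquad i = 1,\ldots,4,
\]
depend real analytically on $(\lambda,a) \in U \times \R$.

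Next, let $\tilde M_i(\lambda,a) \in GL(4,\R)$ be the induced endomorphism of the $4$-dimensional real vector space of Hermitian forms on $\C^2$ under $\inn_H \mapsto \langle M_i \cdot, M_i \cdot \rangle_H$; this is again analytic in $(\lambda,a)$. Imitating \eqref{eq:Qt}, define the non-negative definite self-adjoint operator
\[
Q(\lambda,a) = \sum_{i=1}^{4} \bigl(\tilde M_i(\lambda,a)^{*} - \Id\bigr)\bigl(\tilde M_i(\lambda,a) - \Id\bigr)
\]
on $\R^4$. By the same kernel computation as in Lemma \ref{genericparallel}, the space of Hermitian forms fixed by all $\tilde M_i$ equals $\ker Q(\lambda,a)$, and this coincides with the space of Hermitian forms preserved by $\Hol(\nd_{\lambda,a})$ since the $M_i$ generate the holonomy. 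Consequently
\[
(\lambda,a) \in Z \cap (U \times \R) \iff \det Q(\lambda,a) = 0 .
\]
Setting $f(\lambda,a) = \det Q(\lambda,a)$ gives a real analytic function on $U \times \R$ with $Z \cap (U \times \R) = \{f = 0\}$, which is exactly the claim.

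The one slightly delicate point, and the place where I would spend the most care, is the construction of an analytic family of loops $\gamma_i(\lambda)$ realising the same generators of the fundamental group throughout $U$; everything else is routine. One way to bypass this altogether is to note that the complement $\{(x,\lambda) \in \C^2 \times U : x \notin \cup_i L_i(\lambda)\}$ fibers locally trivially over $U$, so by Ehresmann a fixed collection of loops in one fiber can be transported to an analytic (in fact smooth in $\lambda$ and constant in $x_0$) family of loops in nearby fibers, after which the ODE argument gives analyticity of $M_i(\lambda,a)$ with no further difficulty.
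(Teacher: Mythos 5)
Your proposal is correct and follows essentially the same route as the paper: analyticity of the residues via Lemma \ref{lem:analyticdependence}, an analytic family of holonomy representations on the $4$-dimensional real space of Hermitian forms, and $f=\det Q$ with $Q$ exactly as in Equation \eqref{eq:Q}. The extra care you take over choosing the loops $\gamma_i(\lambda)$ consistently for $\lambda\in U$ is a detail the paper leaves implicit, and your local-triviality remark resolves it adequately.
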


\begin{proof}
	By Lemma \ref{lem:analyticdependence} the family of Dunkl connections \(\nd_{\lambda, a}\) given by Equation \eqref{eq:dunkfamily} depends analytically on \((\lambda, a)\). 
	Take generators \(g_1, \ldots, g_k\) of the fundamental group \(G=\pi_1(\C^2 \setminus \cup_i L_i, x_0)\) where \(x_0\) is a fixed point outside the lines and take a basis of \(T_{x_0}\C^2\). The holonomy of the Dunkl connections \(\nd_{\lambda, a}\) make an analytic family of representations \(\rho_{\lambda, a}: G \to GL(\R^4)\) where \(\R^4\) is the space of Hermitian inner products in \(T_{x_0}\C^2\). Set
	\begin{equation}\label{eq:Q}
	Q(\lambda, a) = \sum_i (\rho_{\lambda,a}(g_i)^*-1)(\rho_{\lambda,a}(g_i)-1).	
	\end{equation}
	Note that \(Q\) is a non-negative operator with kernel equal to the invariant Hermitian forms. The function \(f = \det Q\) is analytic and \(Z=\{f=0\}\).
\end{proof}

\subsection{One parameter families of flat Hermitian forms.}\label{sec:ops}
The main result of this section is Lemma \ref{lem:posdef}. It says that if \(\nd_{\lambda, a}\) preserves a non-zero Hermitian form \(h_{\lambda}\) for all \(\lambda \in \m\) and some fixed \(a \in ((0,1/2) \cup (3/2, 2)) + 2 \Z\) then
\(h_{\lambda}\) must be definite. The proof combines the results from Sections \ref{sec:holonomy} and \ref{sec:Dunkunitary}. We use that \(h_{\lambda}\) is definite when \(\lambda=-1\) (dihedral arrangement). On the other hand, if we have a path \(\lambda(t)\) starting at \(\lambda(0)=-1\) then the Hermitian forms \(h_{\lambda(t)}\) must all be definite as follows from the fact that the holonomy of the connection \(\nd_{\lambda, a}\) is irreducible for all \(\lambda\). We begin with a standard fact about continuous families of linear operators.

\begin{lemma}\label{lem:kermatrices}
	Let \(B\) be a topological space. Suppose that for each \(\lambda \in B\) we have a matrix \(P_{\lambda} \in M(n \times n, \R)\) whose entries depend continuously on \(\lambda\) and such that \(\ker P_{\lambda}\) is  \(1\)-dimensional for all \(\lambda\). Then we can locally find a continuous family of non-zero vectors \(\lambda \mapsto v_{\lambda} \in \R^n\) such that \(P_{\lambda}v_{\lambda}=0\).
\end{lemma}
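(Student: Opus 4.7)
The plan is to use the adjugate (classical adjoint) matrix, whose entries are polynomials in the entries of $P_\lambda$ and hence depend continuously on $\lambda$. Recall the identity
\[
P_\lambda \cdot \mathrm{adj}(P_\lambda) = \det(P_\lambda) \cdot I_n .
\]
Since $\ker P_\lambda$ is one-dimensional for every $\lambda$, we have $\det(P_\lambda) \equiv 0$, and therefore every column of $\mathrm{adj}(P_\lambda)$ lies in $\ker P_\lambda$.

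Fix $\lambda_0 \in B$. Because $\ker P_{\lambda_0}$ is one-dimensional, the rank of $P_{\lambda_0}$ is $n-1$, so there exists an $(n-1)\times(n-1)$ submatrix with non-zero determinant. Equivalently, some cofactor $C_{jk}(P_{\lambda_0})$ is non-zero, so the $j$-th column of $\mathrm{adj}(P_{\lambda_0})$ is a non-zero vector. By continuity of the cofactors in the entries of the matrix, this $j$-th column remains non-zero on an open neighbourhood $U \subset B$ of $\lambda_0$.

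Define, for $\lambda \in U$,
\[
v_\lambda = \text{the } j\text{-th column of } \mathrm{adj}(P_\lambda).
\]
Then $\lambda \mapsto v_\lambda$ is continuous (the entries are polynomial expressions in the continuous entries of $P_\lambda$), non-zero on $U$ by choice of $U$, and satisfies $P_\lambda v_\lambda = 0$ by the adjugate identity together with $\det P_\lambda \equiv 0$. This yields the desired local continuous section $\lambda \mapsto v_\lambda \in \ker P_\lambda \setminus \{0\}$.

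There is no real obstacle here; the only point requiring a sentence of care is the passage from ``$\ker P_\lambda$ is one-dimensional'' to ``some cofactor of $P_{\lambda_0}$ is non-zero'', which is just the rank characterisation via minors. The statement is local and no global obstruction (such as the non-orientability of the tautological line bundle over $\mathbb{RP}^{n-1}$) can appear.
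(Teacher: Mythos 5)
Your proof is correct, and it takes a genuinely different route from the paper's. The paper first shows that the induced map \(\lambda \mapsto \ker P_\lambda \in \RP^{n-1}\) is continuous, by a compactness/contradiction argument with sequences in \(\RP^{n-1}\), and then obtains the local non-zero section from the local triviality of the fibration \(\R^n \setminus \{0\} \to \RP^{n-1}\). You instead exhibit the section explicitly as a column of the adjugate matrix: since \(\dim\ker P_\lambda = 1\) forces \(\det P_\lambda \equiv 0\), the identity \(P_\lambda\,\mathrm{adj}(P_\lambda)=\det(P_\lambda)I\) puts every column of \(\mathrm{adj}(P_\lambda)\) in the kernel, and the rank-\((n-1)\) hypothesis guarantees a column that is non-zero at \(\lambda_0\), hence on a neighbourhood. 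Your cofactor bookkeeping (if \(C_{jk}(P_{\lambda_0})\neq 0\) then the \(j\)-th column of the adjugate is non-zero) is accurate. What your approach buys is an explicit polynomial formula for \(v_\lambda\) and an argument valid for an arbitrary topological space \(B\) with no appeal to sequences or to the fibration structure; what the paper's approach buys is a more geometric statement (continuity of the kernel as a map into projective space) that would adapt directly to kernels of constant higher dimension, viewed as a map into a Grassmannian. Both are complete proofs of the lemma as stated.
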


\begin{proof}
	Since the projection map \(\R^{n+1} \setminus \{0\} \to \RP^{n-1}\) is a locally trivial fibration, it is enough to show that the map \(\lambda \mapsto \ker P_{\lambda} \in \RP^{n-1}\) is continuous. If not then we would have some \(\lambda_0\) and a sequence \(\lambda_i \to \lambda_0\) as \(i \to \infty\) such that \(p_i = \ker P_{\lambda_i}\) belong to \(\RP^{n-1} \setminus U\) where \(U\) is some neighbourhood of \(p_0 = \ker P_{\lambda_0}\). By compactness we can take a subsequence such that \(p_i \to p_{\infty}\) where \(p_{\infty} \in \RP^{n-1} \setminus U\). But then \( \ker P_{\lambda_0}\) would be at least two dimensional, contradicting the assumption.
\end{proof}

\begin{lemma}\label{lem:posdef}
	Fix \(a \in I\)  where \(I\) is the disjoint collection of intervals of the form 
	\begin{equation}
	\left(0,\frac{1}{2}\right) + 2\Z \hspace{2mm} \text{ or } \hspace{2mm} \left(\frac{3}{2},2 \right) + 2\Z 	.
	\end{equation}
	Suppose that \(\nd_{\lambda, a}\) preserves a non-zero Hermitian form \(h_{\lambda}\) form for all \(\lambda \in \m\) then the following holds.
	\begin{enumerate}
		\item[(i)] Up to scale, \(h_{\lambda}\) is the unique Hermitian form preserved by \(\nd_{\lambda, a}\) and \(h_{\lambda}\) is non-degenerate.
		\item[(ii)] We can take \(h_{\lambda}>0\) that depend continuously on \(\lambda \in \m\). In the sense that, for \(\lambda\) varying in a small open set \(U \subset \m\) and for each point \(x_0\) outside the collection of four lines, we have a continuous family of Hermitian inner products in the tangent space \(T_{x_0}\C^2\).
	\end{enumerate}
\end{lemma}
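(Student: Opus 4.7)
The plan is to use Corollary \ref{cor:irrhol} to establish irreducibility of the holonomy for all $\lambda \in \m$, deduce (i) via a Schur-type argument, and then use Lemma \ref{lem:kermatrices} for local continuity in (ii), anchoring the signature at the $B_2$-configuration $\lambda = -1$ through Proposition \ref{prop:dihedral}.

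For part (i), I would verify the hypotheses of Corollary \ref{cor:irrhol} for $\nd_{\lambda, a}$: all four residue traces equal $a$, so $\sum_i a_i = 4a \neq 0$ and $a \neq 3a = \sum_{i \neq j} a_i$. The quantity $\sum_{i \in I} a_i - \sum_{i \notin I} a_i$ ranges over $\{0, \pm 2a, \pm 4a\}$, and the assumption that $a$ lies in $(0, 1/2) \cup (3/2, 2)$ modulo $2\Z$ forces $a \notin \tfrac{1}{2}\Z$, hence $2a \notin \Z$ and $4a \notin 2\Z$; in particular none of these values is a positive even integer, and $a \notin \Z$. Corollary \ref{cor:irrhol} then yields irreducible holonomy at every $\lambda \in \m$. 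The remainder of (i) is a Schur-type argument: the kernel of any non-zero invariant Hermitian form is holonomy-invariant and hence zero by irreducibility, giving non-degeneracy; and given two non-zero invariant forms $h, h'$, the $h$-self-adjoint endomorphism $T$ with $h' = h(T\cdot, \cdot)$ commutes with the holonomy, so its real spectral decomposition of $\C^2$ is holonomy-invariant and $T$ must be a real scalar.

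For part (ii), I would work with the analytic operator $Q(\lambda, a)$ from the proof of Lemma \ref{lem:locan}, whose kernel is the space of holonomy-invariant Hermitian forms. By (i) this kernel is one-dimensional for every $\lambda \in \m$, so Lemma \ref{lem:kermatrices} produces local continuous non-zero sections $\lambda \mapsto h_\lambda$. Each such $h_\lambda$ is non-degenerate, hence has a locally constant signature. To pin down the signature I would anchor at $\lambda = -1$, where the four lines form the $B_2$-arrangement and Proposition \ref{prop:dihedral} guarantees that $\nd_{-1, a}$ is unitary for the prescribed values of $a$; thus $h_{-1}$ is definite. For any other $\lambda_0 \in \m$, I would join it to $-1$ by a path in $\m$, cover the path by finitely many open sets carrying local sections, and glue them using the uniqueness from (i) into a continuous family of non-zero invariant forms along the path; its signature stays definite since it is locally constant and the forms are non-degenerate everywhere. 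Normalising the sign at a basepoint of the local neighbourhood then produces $h_\lambda > 0$ as required.

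The main delicacy I anticipate is the global sign bookkeeping: the invariant Hermitian forms constitute a real line bundle over $\m$ whose orientability is not a priori guaranteed, so a globally continuous normalisation may fail to exist. This is bypassed by the observation that the property of being definite is an open condition on non-degenerate Hermitian matrices that is invariant under $h \mapsto -h$, hence propagates unambiguously along any path in the non-degenerate locus; since part (ii) requests only local continuity of a positive representative, this is enough. Everything else amounts to a direct application of the tools assembled in Sections \ref{sec:holonomy} and \ref{sec:Dunkunitary}.
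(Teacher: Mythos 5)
Your proposal is correct and follows essentially the same route as the paper: irreducibility of the holonomy via the non-integer conditions on the residue traces (the paper invokes Proposition \ref{prop:flatbundle} directly where you use its Corollary \ref{cor:irrhol}, which is the same computation $\{0,\pm 2a,\pm 4a\}\cap 2\Z_{>0}=\emptyset$), then continuity via the operators $Q_\lambda$ and Lemma \ref{lem:kermatrices} along a path anchored at the $B_2$-configuration $\lambda=-1$, where Proposition \ref{prop:dihedral} pins down definiteness. The only cosmetic difference is in the uniqueness step of (i): the paper passes to a degenerate member $h_\lambda - s h'_\lambda$ of the pencil and rules out a one-dimensional kernel, whereas you use the commutant endomorphism $T$ with $h'=h(T\cdot,\cdot)$; these are equivalent arguments.
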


\begin{proof}
	\((i)\) If \(h'_{\lambda}\) was another flat Hermitian form then, for a suitable choice of \(s \in \R\) the flat Hermitian form \(h_{\lambda} -s h'_{\lambda}\) would have non-zero kernel. If this kernel is \(1\)-dimensional then we get an invariant rank \(1\) holomorphic foliation. 
	On the other hand, if \(a \in I\) then the non-integer assumptions of Proposition \ref{prop:flatbundle} are satisfied and there are no rank \(1\) invariant foliations. Therefore the above kernel has full rank and \(h_{\lambda} = s h'_{\lambda}\).
	
	\((ii)\) The flat connections \(\nd_{\lambda, a}\) produce a continuous family of representations \(\rho_{\lambda}: G \to GL(\R^4)\) where \(G\) is the fundamental group of the arrangement complement, \(\R^4\) is the vector space of Hermitian inner products on \(T_{x_0} \C^2\) and \(x_0 \in \C^2 \setminus \cup_i L_i\) is some fixed point.\footnote{Here we think of the parameter \(\lambda\) to be varying in a small open set \(U \subset \m\) so we can certainly take a point \(x_0\) that doesn't belong to any of the lines
	\(L_1 = \{z=0\}\), \(L_2 = \{w=0\}\), \(L_3 = \{w=z\}\), and \(L_4 = \{z=\lambda w\}\) for \(\lambda \in U\).}	
	As in Lemma \ref{genericparallel} we can define a continuous family of non-negative linear operators \(Q_{\lambda}: \R^4 \to \R^4\) such that \(\ker Q_{\lambda}\) are the flat Hermitian forms. By item \((i)\) \(\dim \ker Q_{\lambda} = 1\) for all \(\lambda\). Take a path \(\lambda(t)\) that connects some given \(\lambda = \lambda(1) \in \m\) to \(\lambda(0)=-1\) (dihedral arrangement).
	By Lemma \ref{lem:kermatrices} we can find a continuous family of non-zero Hermitian inner products \(H(t) \in \ker Q_{\lambda(t)}\). By Proposition \ref{prop:dihedral} we can assume (after multiplication by \(-1\) if necessary) that \(H(0) >0\). By item \((i)\) no \(H(t)\) has kernel, so we must have that \(H(t)\) is positive for all \(t\) and therefore \(h_{\lambda}>0\). 	
\end{proof}

\subsection{Klein four-group.}\label{sec:k4}
Given a set \(S\) of four points in the Riemann sphere there is a subgroup \(K \cong \Z_{2} \times \Z_{2}\) of M\"obius transformations that preserves \(S\). The quotient map of \(\CP^1\) by the \(K\)-action is a degree \(4\) branched covering map \(\Phi: \CP^1 \xrightarrow{4:1} \CP^1\). The image of \(S\) together with the \(3\) critical values of \(\Phi\) make another set \(S'\) of four points. The main result of this section is Lemma \ref{lem:k4}, it says that the configuration \(S'\) is equal to \(S\) by a M\"obius map. 

\begin{remark}
	An algebraic (shorter) proof of this fact is as follows. Suppose we have an elliptic curve \(E\) and consider the action on it by a group \(G\) of order \(8\) generated by \(3\) elements \(\sigma: x \mapsto -x\) and two translations by half periods. Then the statement of Lemma \ref{lem:k4} is that the quotients \(E/G\) and \(E/\sigma\) are isomorphic as orbifolds. We take the long route of straightforward computation but which has the advantage of fixing the marking convention for the \(4\) points to be used later in Section \ref{sec:pfthmp}.
\end{remark}

We begin by recalling the identification of \(\m\) with a trice punctured sphere via cross-ratio.
\(\m\) is the set of ordered \(4\)-tuples \((x_1,x_2,x_3, x_4)\) of points in  \(\CP^1\) up to the action of \(PSL(2,\C)\) by M\"obius transformations. There is a unique M\"obius map \(\mu\) that takes \(x_1, x_2, x_3\) to \(0,1, \infty\) and so a unique representative of the form \((0,1,\infty, \lambda)\) in the \(PSL(2, \C)\) orbit of \((x_1,x_2,x_3, x_4)\). 
More explicitly,
\begin{equation}\label{eq:crossratio}
\mu = \left(\frac{x_2-x_3}{x_2-x_1}\right) \left(\frac{z-x_1}{z-x_3}\right)  \hspace{2mm} \text{ and } \hspace{2mm} \lambda = \left(\frac{x_2-x_3}{x_2-x_1}\right) \left(\frac{x_4-x_1}{x_4-x_3}\right) 	
\end{equation}
is the cross-ratio of \((x_1, x_2, x_3, x_4)\). This way we identify \(\m = \CP^1 \setminus \{0,1,\infty\}\).

Consider now the action of the symmetric group \(S_4\) by permutations.
There are \(3\) ways to partition \(4\) objects into two pairs. This gives a homomorphism \(S_4 \to S_3\) with kernel the Klein \(4\)-group \(V = \{1, \sigma_1, \sigma_2, \sigma_3\}\) where
\[
\sigma_1 = (14)(23), \hspace{2mm} \sigma_2=(24)(13), \hspace{2mm} \sigma_3=(34)(12) .
\]
Easy inspection of Equation \eqref{eq:crossratio} shows that the \(\sigma_i\) leave invariant the cross-ratio \(\lambda\). In particular, we have the following.


\begin{lemma}\label{lem:Kgroup}
	Let \((x_1, x_2, x_3, x_4)\) be an ordered \(4\)-tuple of points in \(\CP^1\). Then there is a uniquely determined subgroup of M\"obius transformations 
	\[K = \{1, M_1, M_2, M_3\} \subset PSL(2, \C)\]  
	that preserves the set of \(4\) points extending the action of the Klein \(4\)-group on \((x_1, x_2, x_3, x_4)\) by permutations in which \(M_i\) restricts to \(\sigma_i\).
\end{lemma}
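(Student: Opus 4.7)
The plan is to reduce to the normalized representative $(0,1,\infty,\lambda)$ by a M\"obius change of coordinates, construct each $M_i$ using three-transitivity of $PSL(2,\C)$ on $\CP^1$, and verify the fourth-point compatibility via cross-ratio invariance. The group structure then falls out of the uniqueness of a M\"obius map with prescribed values at three points.

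After the reduction, three-transitivity provides, for each $i \in \{1,2,3\}$, a \emph{unique} $M_i \in PSL(2,\C)$ with $M_i(x_j) = x_{\sigma_i(j)}$ for $j=1,2,3$. The substantive step is to check that automatically $M_i(x_4) = x_{\sigma_i(4)}$ as well. Since $M_i$ is M\"obius it preserves the cross-ratio, so
\[
[x_{\sigma_i(1)}, x_{\sigma_i(2)}, x_{\sigma_i(3)}, M_i(x_4)] \;=\; [x_1, x_2, x_3, x_4] \;=\; \lambda .
\]
On the other hand, a direct computation (or the classical fact that $V$ is precisely the kernel of the action $S_4 \to S_3$ permuting the six values of the cross-ratio, which underlies the six-fold symmetry of the $\lambda$-line) gives
\[
[x_{\sigma_i(1)}, x_{\sigma_i(2)}, x_{\sigma_i(3)}, x_{\sigma_i(4)}] \;=\; \lambda .
\]
Because three points together with the value of the cross-ratio determine the fourth, we conclude $M_i(x_4) = x_{\sigma_i(4)}$, so $M_i$ realizes $\sigma_i$ on the whole $4$-tuple.

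With the $M_i$ in hand the group structure is automatic. Since any M\"obius map is determined by its action on three points, the identities $\sigma_i^2 = 1$ and $\sigma_i \sigma_j = \sigma_k$ in $V$ transfer to $M_i^2 = 1$ and $M_i M_j = M_k$, because both sides of each equation induce the same permutation on the triple $\{x_1, x_2, x_3\}$. Uniqueness of the subgroup $K$ itself follows by the same principle: any M\"obius map that extends $\sigma_i$ on the four-point set in particular prescribes three of its values, hence equals $M_i$.

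The only mildly non-trivial ingredient is the invariance of the cross-ratio under $V$, which can either be checked directly on the three involutions using the formula \eqref{eq:crossratio}, or quoted as a standard fact; no serious obstacle is expected.
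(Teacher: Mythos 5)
Your proposal is correct. The existence step is sound: for each \(i\), three-transitivity of \(PSL(2,\C)\) gives a unique \(M_i\) with \(M_i(x_j)=x_{\sigma_i(j)}\) for \(j=1,2,3\); since \(M_i\) preserves cross-ratios and the cross-ratio \eqref{eq:crossratio} is invariant under the Klein group \(V\), the two quantities \([x_{\sigma_i(1)},x_{\sigma_i(2)},x_{\sigma_i(3)},M_i(x_4)]\) and \([x_{\sigma_i(1)},x_{\sigma_i(2)},x_{\sigma_i(3)},x_{\sigma_i(4)}]\) both equal \(\lambda\), and injectivity of \(y\mapsto[x_{\sigma_i(1)},x_{\sigma_i(2)},x_{\sigma_i(3)},y]\) forces \(M_i(x_4)=x_{\sigma_i(4)}\). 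The group relations and uniqueness then follow from agreement on three points, as you say. (Implicitly you use that the four points are distinct, which is the standing assumption here since configurations live in \(\m\).)

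The route differs from the paper's in emphasis rather than in substance. The paper records the \(V\)-invariance of the cross-ratio just before the lemma and then, in the proof itself, simply normalizes to \((0,1,\infty,\lambda)\) and exhibits the three maps explicitly as in \eqref{eq:Mi}; your argument instead makes rigorous the soft deduction (three-transitivity plus cross-ratio invariance) and needs no formulas at all, and in fact no normalization either. What the explicit computation buys the paper is the formulas \eqref{eq:Mi} themselves, which are reused downstream: they produce the fixed-point polynomials and the degree-\(4\) covering \(\Phi\) in Lemma \ref{lem:branchedcovering}, and they reappear in Lemma \ref{lem:invariance}. Your approach is cleaner for the lemma in isolation but would still require computing the \(M_i\) later for those applications.
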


\begin{proof}
	We can assume that \((x_1, x_2, x_3, x_4) = (0,1,\infty, \lambda)\). Then
	\begin{equation}\label{eq:Mi}
	M_1 = \frac{z-\lambda}{z-1}, \hspace{2mm} M_2 = \frac{\lambda}{z}, \hspace{2mm} M_3 = \lambda \frac{z-1}{z-\lambda} . \qedhere
	\end{equation}	
\end{proof}

\begin{remark}\label{rmk:label}
	The Klein \(4\)-group acts simply transitively on \(1,2,3,4\). Any coset \([\sigma] \in S_4/V\) has a unique representative that fixes the element \(4\) thus giving an identification \(S_4/V = S_3\). This way a re-labelling  of the points \(x_{\sigma(i)}\) produces a re-labelling of the M\"obius maps \(M_{[\sigma](i)}\). 
	
	On the other hand, a labelling of the elements of \(V\) (or \(M\)) is equivalent to giving a group homomorphism \(V \cong \Z_{2}\times \Z_{2}\) say by \(\sigma_1 \mapsto (1,0)\) and \(\sigma_2 \mapsto (0,1)\).
\end{remark}

\begin{lemma}\label{lem:branchedcovering}
	There is a \(K\)-invariant degree \(4\) branched covering map
	\[
	\Phi: \CP^1 \to \CP^1
	\]
	that sends all the four points \(\{x_1, x_2, x_3, x_4\}\) to a regular value \(y_4\) and has \(3\) critical values \(y_1, y_2, y_3\) where \(y_i\) is the image of the fixed point set of \(M_i\). The map \(\Phi\) is uniquely determined up to \(PSL(2, \C)\) post-composition. 
\end{lemma}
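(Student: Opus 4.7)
The plan is to construct $\Phi$ as the quotient map for the $K$-action on $\CP^1$. Since $K$ is a finite group of biholomorphisms, the quotient $\CP^1/K$ inherits a Riemann surface structure with a natural holomorphic projection $\pi: \CP^1 \to \CP^1/K$ of degree $|K| = 4$. The quotient has genus zero --- either by invoking the general fact that the quotient of a simply connected Riemann surface by a finite group of biholomorphisms is rational, or by running Riemann--Hurwitz backwards once the ramification profile is understood --- so we fix an identification $\CP^1/K \cong \CP^1$ and set $\Phi = \pi$ under it.

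Next I would locate the critical points and their images. Each non-identity $M_i$ is an order-two Möbius transformation and therefore has exactly two fixed points in $\CP^1$. These three fixed-point sets are pairwise disjoint: if $p$ were fixed by both $M_i$ and $M_j$ with $i \neq j$, then it would also be fixed by $M_iM_j = M_k$, so every element of $K$ would fix $p$, contradicting the faithfulness of the $K$-action on $\CP^1$. Hence there are exactly six critical points, each of local ramification index $2$, organized into three $K$-orbits of size two which project to the three critical values $y_1, y_2, y_3$, with $y_i$ the image of $\Fix(M_i)$. A Riemann--Hurwitz check $-2 = 4(-2) + 6$ confirms consistency.

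By Lemma~\ref{lem:Kgroup} the group $K$ acts simply transitively on $\{x_1, x_2, x_3, x_4\}$ and no $\sigma_i$ fixes any index in $\{1,2,3,4\}$, so the four points form a single \emph{free} $K$-orbit and project to a common regular value $y_4$. Uniqueness is automatic from the quotient description: any other $K$-invariant holomorphic map $\Phi': \CP^1 \to \CP^1$ factors as $\Phi' = h \circ \Phi$ through the quotient, and if $\Phi'$ has degree $4$ then $h$ has degree one, hence is a Möbius transformation.

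The only real subtlety is the genus-zero computation for the quotient; once that is granted, everything else is essentially a bookkeeping exercise in Riemann--Hurwitz and orbit analysis. An alternative route that avoids invoking the quotient theorem is to write down $\Phi$ explicitly in the normal form $(x_1,x_2,x_3,x_4) = (0,1,\infty,\lambda)$ using the formulas \eqref{eq:Mi} for $M_1, M_2, M_3$, producing a rational function of degree $4$ by hand and then verifying its invariance, critical values, and uniqueness directly; but the intrinsic argument above is cleaner.
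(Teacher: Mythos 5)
Your proof is correct in substance but takes a genuinely different route from the paper. You construct $\Phi$ intrinsically as the quotient map $\CP^1 \to \CP^1/K$ and read off the ramification data from the orbit structure, with uniqueness coming from the universal property of the quotient. The paper instead does exactly the computation you relegate to an ``alternative route'': it normalizes $(x_1,x_2,x_3,x_4)=(0,1,\infty,\lambda)$, computes the fixed points of the $M_i$ as roots of the quadratics $z^2-2z+\lambda$, $z^2-\lambda$, $z^2-2\lambda z+\lambda$, pins down $\Phi$ by requiring $(y_1,y_2,y_3)=(0,1,\infty)$, and exhibits the explicit rational function $\Phi(z)=\lambda(z^2-2z+\lambda)^2/(z^2-2\lambda z+\lambda)^2$. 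Your argument is cleaner for the existence--uniqueness statement itself, but the paper's choice is deliberate: the explicit formula \eqref{eq:4cover} is what makes the subsequent Lemma \ref{lem:k4} a one-line verification ($\Phi(\lambda)=\lambda$, so the induced map on $\m$ is the identity), and the authors say as much in the remark preceding the construction. With your intrinsic approach you would still need a separate argument (e.g.\ the elliptic-curve argument $E/G\cong E/\sigma$ sketched in that remark) to identify $(y_1,y_2,y_3,y_4)$ with $(x_1,x_2,x_3,x_4)$ and to fix the marking conventions used later in Section \ref{sec:pfthmp}.

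One small imprecision: to show the three fixed-point sets are pairwise disjoint you conclude that a common fixed point of $M_i$ and $M_j$ would be fixed by all of $K$, ``contradicting the faithfulness of the $K$-action.'' Faithfulness alone is not contradicted by a global fixed point; the correct statement is that the stabilizer of a point under a faithful finite holomorphic action is cyclic (it embeds into $\C^*$ via the derivative at the fixed point), whereas $K\cong\Z_2\times\Z_2$ is not cyclic. Equivalently and more elementarily: two distinct commuting involutions of $\CP^1$ sharing a fixed point would have a parabolic product, which cannot have order $2$. This is a one-line fix, not a gap in the strategy.
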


\begin{proof}
	Take \((x_1, x_2, x_3, x_4) = (0,1,\infty, \lambda)\) so the M\"obius maps \(M_i\) are given by Equation \eqref{eq:Mi}.
	Each \(M_1, M_2, M_3\) has a pair of fixed points given by the roots of the polynomials 
	\[
	z^2 - 2z + \lambda, \hspace{2mm} z^2-\lambda, \hspace{2mm} z^2 - 2\lambda z + \lambda .
	\]
	To fix \(\Phi\) we require that \(y_1, y_2, y_3 = 0,1,\infty\).	
	This allows us to determine the unique map
	\begin{equation}\label{eq:4cover}
	\Phi(z) = \lambda \frac{(z^2-2z+\lambda)^2}{(z^2-2\lambda z+\lambda)^2}
	\end{equation}
	with the desired properties (here \(y_4=\lambda\)).
\end{proof}

By Lemma \ref{lem:branchedcovering} we obtain a well-defined map \(\m \to \m\) by
\begin{equation}\label{eq:4map}
(x_1, x_2, x_3, x_4) \mapsto (y_1, y_2, y_3, y_4) .
\end{equation}

Conversely, by the Riemann Existence Theorem \cite[Chapter 4.2.2]{don} if we are given \((y_1, y_2, y_3, y_4)\) then there is a (unique up to \(PSL(2, \C)\) pre-composition) degree \(4\) branched covering map \(\Phi\) given by the transitive permutation representation
\[
\pi_1 (\CP^1\setminus\{y_1, y_2, y_3\}) \to S_4
\]
that sends a positive loop encircling \(y_i\) to \(\sigma_i\). The labelling of the deck transformations corresponding to \(\sigma_1, \sigma_2, \sigma_3\)
determines a labelling for the elements of the fibre \(\Phi^{-1}(y_4) = \{x_1, x_2, x_3, x_4\}\) up to the action of the Klein \(4\)-group (see Remark \ref{rmk:label}). However, by Lemma \ref{lem:Kgroup} the \(4\) possible labellings of the points in \(\Phi^{-1}(y_4)\) define \(PSL(2, \C)\)-equivalent configurations. Thus we obtain a well-defined map \((y_1, y_2, y_3, y_4) \mapsto (x_1, x_2, x_3, x_4)\) that inverts \eqref{eq:4map}.

\begin{lemma}\label{lem:k4}
	The map \eqref{eq:4map} defines a holomorphic bijection of \(\m\). Indeed, with our choices, the ordered \(4\)-tuple \((y_1, y_2, y_3, y_4)\) is \(PSL(2, \C)\) equivalent to \((x_1, x_2, x_3, x_4)\). So the map \eqref{eq:4map} is simply the identity.
\end{lemma}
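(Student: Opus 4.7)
The plan is to carry out a direct computation from the explicit formula for $\Phi$ recorded in Equation \eqref{eq:4cover}. Working with the normalized representative $(x_1,x_2,x_3,x_4) = (0,1,\infty,\lambda)$ of a point in $\m$, the branched covering $\Phi$ was \emph{chosen} so that its three critical values satisfy $(y_1,y_2,y_3) = (0,1,\infty)$; only the fourth point $y_4 = \Phi(x_i)$ remains to be determined. Since the four $x_i$ form a single $K$-orbit, they all have the same image under the $K$-invariant map $\Phi$, so $y_4$ can be computed by evaluating $\Phi$ at any one of them.

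To evaluate $y_4$, I would substitute $z = 0$ into $\Phi(z) = \lambda(z^2 - 2z + \lambda)^2/(z^2 - 2\lambda z + \lambda)^2$, obtaining
\[
\Phi(0) \;=\; \lambda \cdot \frac{\lambda^2}{\lambda^2} \;=\; \lambda.
\]
As a sanity check, evaluating at $z = 1, \infty, \lambda$ should also yield $\lambda$; this is a short verification in each case (for $z=\infty$ compare leading coefficients, for $z=1$ and $z=\lambda$ use that numerator and denominator differ by an overall sign). Hence the normalized representative of $(y_1,y_2,y_3,y_4)$ is $(0,1,\infty,\lambda)$, and under the cross-ratio identification $\m = \CP^1 \setminus \{0,1,\infty\}$ the map \eqref{eq:4map} sends $\lambda$ to itself.

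The only subtlety is that \eqref{eq:4map} must be well-defined independently of the choice of $PSL(2,\C)$-representative, but this is guaranteed by the uniqueness statements in Lemmas \ref{lem:Kgroup} and \ref{lem:branchedcovering}: any Möbius change of coordinates transforms the subgroup $K$ and the map $\Phi$ equivariantly, so the induced map on $\m$ is insensitive to the normalization. Since the computation above applies to an arbitrary point of $\m$ after reducing to the form $(0,1,\infty,\lambda)$, we conclude that \eqref{eq:4map} is the identity, and in particular is a holomorphic bijection. I do not foresee any real obstacle here; once the normalization in Lemma \ref{lem:branchedcovering} is in place the lemma reduces to a one-line evaluation.
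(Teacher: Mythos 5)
Your proposal is correct and follows essentially the same route as the paper: both reduce to the normalized representative \((0,1,\infty,\lambda)\) and evaluate the explicit formula \eqref{eq:4cover} to find \(y_4=\lambda\) (the paper computes \(\Phi(\lambda)\), you compute \(\Phi(0)\), which agree by \(K\)-invariance). Your extra remarks on well-definedness are consistent with the discussion the paper places before the lemma.
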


\begin{proof}
	Identify \(\m = \CP^1 \setminus \{0,1,\infty\}\) and let \(\Phi\) be given by Equation \eqref{eq:4cover}. Then the map \eqref{eq:4map} is  the identity transformation
	\begin{equation}
	\lambda \mapsto \Phi(\lambda) = \lambda . \qedhere
	\end{equation}
\end{proof}

\begin{remark}[Geometric interpretation.]
	Take a tetrahedron \(T\) with total angle \(\pi\) at each of its \(4\) vertices,
	so that its four faces are made of isometric triangles.\footnote{A tetrahedron \(T\) like this is known as disphenoid or isosceles tetrahedron.} A \(180^{\circ}\)-rotation around an axis that connects midpoints of opposite edges defines an isometry of \(T\). There are \(3\) such isometries, taking the quotient of \(T\) by them we get another tetrahedron \(T'\) which is a copy of \(T\) scaled by a factor of \(1/2\).
\end{remark}

\subsection{Elementary topology.}\label{sec:top}
Here we prove two easy results in topology needed for our arguments. These are the next Lemmas \ref{lem:topology} and \ref{lem:punctures}.

Recall that surfaces of genus \(\geq 1\) have universal cover homeomorphic to \(\R^2\), therefore all their higher homotopy groups \(\pi_i\) vanish for \(i \geq 2\). As a consequence, we have the next.

\begin{lemma}\label{lem:topology}
	Let \(\Sigma_g\) be a surface of genus \(g \geq 1\) and let \(F: \Sigma_g \to S^2\) be a continuous map. Then there is no continuous map \(s: S^2 \to \Sigma_g\) such that \(F \circ s = 1_{S^2}\).
\end{lemma}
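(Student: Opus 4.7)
The plan is to derive a contradiction by showing that the composition $F \circ s$ would be null-homotopic, while the identity $1_{S^2}$ is not. This uses precisely the fact about higher homotopy groups that the paper highlights just before the statement.

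First, I would observe that any continuous map $s: S^2 \to \Sigma_g$ with $g \geq 1$ is null-homotopic. Indeed, since $S^2$ is simply connected, $s$ lifts to a continuous map $\tilde{s}: S^2 \to \widetilde{\Sigma}_g$ into the universal cover. For $g \geq 1$ the universal cover is homeomorphic to $\R^2$ (when $g=1$) or to the hyperbolic plane $\h^2$ (when $g \geq 2$), and in either case it is contractible. Hence $\tilde{s}$ is homotopic to a constant map, and composing the homotopy with the covering projection $p: \widetilde{\Sigma}_g \to \Sigma_g$ gives a null-homotopy of $s = p \circ \tilde{s}$. Equivalently, one can simply say that $\pi_2(\Sigma_g)=0$ for $g \geq 1$, as the paper remarks.

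Next, if $F \circ s = 1_{S^2}$ held, then composing the null-homotopy of $s$ with $F$ would yield a null-homotopy of $1_{S^2}$, meaning $1_{S^2}$ would be homotopic to a constant. But the identity of the $2$-sphere has degree $1$ while constant maps have degree $0$, so they cannot be homotopic. This contradiction completes the proof.

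There is no real obstacle here; the only subtle point to state carefully is the existence of the lift $\tilde{s}$, which follows from the standard lifting criterion because $S^2$ is simply connected. Everything else is a direct application of the vanishing of $\pi_2(\Sigma_g)$ (already recorded in the paragraph preceding the lemma) together with the homotopy invariance of degree for self-maps of $S^2$.
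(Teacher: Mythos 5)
Your proof is correct and follows essentially the same route as the paper's: both use that $\pi_2(\Sigma_g)=0$ (via the contractible universal cover) to conclude that $s$, and hence $F\circ s$, is null-homotopic, contradicting the fact that $1_{S^2}$ is not. The only difference is that you spell out the lifting argument explicitly, which the paper leaves to the remark preceding the lemma.
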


\begin{proof}
	Suppose that such a map \(s\) exists. Then, since \(\pi_2(\Sigma_g)=0\), the map \(s\) is contractible and so is the composition \(F \circ s\). This is a contradiction because the identity map of the \(2\)-sphere is not contractible.
\end{proof}

The next result will be used in the proof of Theorem \ref{thm:main} to extend the forgetful map and the section over the punctures.

\begin{lemma}\label{lem:punctures}
	Let \(M\) and \(N\) be two compact surfaces and let \(P \subset M\) and \(Q \subset N\) be two finite subsets of points. Suppose that \(F: M \setminus P \to N \setminus Q\) is a continuous and proper map. Then \(F\) extends continuously as a map \(F: M \to N\) with \(F(P) \subset Q\).
\end{lemma}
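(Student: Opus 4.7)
The plan is to show that for each puncture $p \in P$, the map $F$ extends continuously to $p$ by sending it to some point $q \in Q$. Since the statement is local at each puncture, I would fix $p \in P$ and work in a small coordinate disc neighborhood of $p$ in $M$.

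First I would choose $\epsilon > 0$ small enough that the open metric balls $\{B_\epsilon(q)\}_{q \in Q}$ (with respect to some metric on $N$ induced by a Riemannian structure) are pairwise disjoint. The complement $K_\epsilon = N \setminus \bigcup_{q \in Q} B_\epsilon(q)$ is then a compact subset of $N \setminus Q$. By properness of $F$, the preimage $F^{-1}(K_\epsilon)$ is compact in $M \setminus P$, hence closed in $M$ and disjoint from $P$. Therefore there exists $\delta > 0$ (depending on $\epsilon$) such that the punctured disc $D_\delta(p) \setminus \{p\}$ is disjoint from $F^{-1}(K_\epsilon)$, which translates to
\[
F\bigl(D_\delta(p) \setminus \{p\}\bigr) \subset \bigcup_{q \in Q} B_\epsilon(q).
\]

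Next I would exploit connectedness. Shrinking $\delta$ if necessary, the punctured disc $D_\delta(p) \setminus \{p\}$ is connected, so its image under the continuous map $F$ is connected. A connected subset of the disjoint union $\bigsqcup_{q \in Q} B_\epsilon(q)$ lies in a single $B_\epsilon(q_p)$ for some uniquely determined $q_p \in Q$. To see that $q_p$ does not depend on $\epsilon$, I would repeat the argument with any $\epsilon' < \epsilon$ obtaining $q'_p \in Q$ with image in $B_{\epsilon'}(q'_p)$; for $x$ in the intersection of the two punctured discs, $F(x) \in B_\epsilon(q_p) \cap B_{\epsilon'}(q'_p)$, and the disjointness of the balls around distinct points of $Q$ forces $q_p = q'_p$.

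Defining the extension by $F(p) := q_p$, continuity at $p$ is immediate from the construction: for every $\epsilon > 0$ below the separation threshold, there is $\delta > 0$ such that $F(D_\delta(p)) \subset \overline{B_\epsilon(q_p)}$. Doing this at each $p \in P$ independently produces the desired continuous extension $F: M \to N$ with $F(P) \subset Q$. There is no real obstacle here; the only point requiring care is arranging that the disc neighborhoods are small enough to be connected and that the balls around points of $Q$ are pairwise disjoint, both of which follow from $P$ and $Q$ being finite sets in compact surfaces.
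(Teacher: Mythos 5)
Your proof is correct. The two arguments use properness in the same way --- a compact set \(K_\epsilon \subset N\setminus Q\) obtained by deleting small balls around \(Q\) has compact preimage, so a small punctured neighbourhood of \(p\) maps into \(\bigcup_q B_\epsilon(q)\) --- but they diverge on the key point of why only \emph{one} ball can occur. The paper works with sequences: it shows every accumulation point of \(F(x_i)\) lies in \(Q\), and then rules out two distinct accumulation points by concatenating paths through interlacing subsequences and extracting points whose images land on the boundary circle of a small disc, contradicting properness. You instead observe that the punctured disc \(D_\delta(p)\setminus\{p\}\) is connected, so its continuous image cannot meet two disjoint balls; this replaces the interlacing construction with a one-line connectedness argument and is the more economical route. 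Both arguments ultimately rest on the same topological input (a punctured neighbourhood of a point in a surface is connected --- the paper uses this implicitly when it builds its path \(c\)), so nothing is lost in generality; your version simply packages the uniqueness step more cleanly. The remaining details in your write-up (the consistency of \(q_p\) as \(\epsilon\) shrinks, and continuity at \(p\) from \(F(D_\delta(p))\subset \overline{B_\epsilon(q_p)}\)) are all in order.
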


\begin{proof}
	Let \(x_i\) be a sequence of points in \(M \setminus P\) that converges to a point \(p\) in \(P\). We want to show that the sequence \(y_i = F(x_i)\) converges to a point in \(Q\). Equivalently, if we denote by \(Y \subset N\) the set of accumulation points of the sequence \(y_i\), then we want to show two things: \((i)\) \(Y \subset Q\) and \((ii)\) \(|Y|=1\).
	
	\((i)\) We prove this by contradiction. If there was a point \(y_{\infty} \in Y \setminus Q\) then after taking a subsequence we would have \(y_{\infty} = \lim_{i \to \infty} y_i\). Consider the compact set \(K= \{ y_{\infty} \} \cup_i \{y_i\} \subset N \setminus Q\). Then the pre-image \(F^{-1}(K) \subset M \setminus P\) is non-compact because it contains the sequence \(x_i\), but this contradicts the properness assumption.
	
	\((ii)\) Note that \(Y\) is non-empty because \(N\) is compact, so we need to show that \(|Y| \leq 1\). We argue by contradiction again and suppose that there are \(2\) distinct points \(q_1, q_2 \in Y\). Let \(x_{1i}\) and \(x_{2i}\) be subsequences of \(\{x_i\}_{i=1}^{\infty}\) such that \(F(x_{1i})\) converges to \(q_1\) and \(F(x_{2i})\) converges to \(q_2\). By taking a concatenation of paths in \(M\) we can construct a continuous path \(c: (0,1) \to M \setminus P\) with \(\lim_{t \to 1} c(t)=p\) and a pair of interlacing sequences \(0< s_1 < t_1 < s_2 < t_2 < \ldots < 1\) with \(\lim_{k \to \infty} s_k = \lim_{k \to \infty} t_k =1\) such that \(c(s_k) = x_{1k}\) and \(c(t_k)=x_{2k}\) for all \(k\). Let \(D \subset N\) be a small disc about \(q_1\).  We can assume that the points \(F(c(s_k))\) belong to the interior of \(D\) while the points \(F(c(t_k))\) are outside the closure of \(D\). Then we can find a sequence of numbers \(s_k < r_k < t_k\) such that \(F(c(r_k))\) belongs to the boundary circle \(\p D \subset N \setminus Q\). Now the sequence of points \(z_k = c(r_k) \in M \setminus P\) converges to \(p\) as \(k \to \infty\) but the images \(F(z_k)\) have a limit point in \(N \setminus Q\) because the circle is compact. This contradicts \((i)\).
\end{proof}

\subsection{Proof of Theorem \ref{thm:main2}.}\label{sec:pfthmp}
By Lemma \ref{lem:locan}  \(Z\) is locally the zero-set of an analytic function \(Z=\{f=0\}\). It only remains to show that \(f\) does not vanish identically. 

\vspace{2mm}
\noindent\textbf{Contradiction assumption:} suppose that \(f \equiv 0\), or equivalently by Lemma \ref{lem:allornoth}, suppose that \(Z= \m \times \R\). 
\vspace{2mm}

Fix \(a<-(3/2)\) (this is \(\alpha>5/2\)) in one of the intervals \((1/2,1)+ 2\Z\) or \((1,3/2)+2\Z\). By Lemma \ref{lem:posdef} for each \(\lambda \in \m\) we can choose a positive definite Hermitian form \(h_{\lambda}>0\) which is preserved by \(\nd_{\lambda, a}\) and such that the family \(\lambda \mapsto h_{\lambda}\) is continuous. By the results in Appendix \ref{sec:unitaryconnect} we obtain a family of spherical metrics \(\lambda \mapsto g_{\lambda}\) that define a continuous map
\[
\m \to \MS_{0,4}(\alpha, \alpha, \alpha, \alpha) .
\]
\begin{lemma}\label{lem:invariance}
	Each \(g_{\lambda}\) is invariant under the M\"obius maps \(M_i\) given by Equation \eqref{eq:Mi} and so it can be pushed down by the map \eqref{eq:4cover} to a metric \(\tilde{g}_{\lambda}\) with cone angles \(\pi\) at \(0,1,\infty\) and \(2\pi\alpha\) at \(\lambda\).
\end{lemma}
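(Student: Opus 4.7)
The plan is to realize each Möbius map $M_i$ as induced by a linear automorphism $\tilde{M}_i$ of $(\C^2, H(\lambda))$ that is unitary and preserves the Dunkl connection $\nd_{\lambda,a}$, and then to invoke naturality of the Kähler quotient construction reviewed in Appendix \ref{sec:unitaryconnect}.

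First I would choose an arbitrary lift $\tilde{M}_i \in GL(2,\C)$ of $M_i \in PSL(2,\C)$. Since $M_i$ permutes the four marked points of $\CP^1$ by $\sigma_i$, the linear map $\tilde{M}_i$ permutes the lines $L_1,\dots,L_4$ by the same permutation. The pulled back inner product $\tilde{M}_i^* H(\lambda)$ is positive definite and still turns every residue matrix of $\tilde{M}_i^* \nd_{\lambda,a}$ into a self-adjoint operator, so it is a Dunkl inner product adapted to the reordered tuple $(L_{\sigma_i(1)},\dots,L_{\sigma_i(4)})$ with the same weights $(a,a,a,a)$. Because these weights are all equal, the reordering changes nothing, and the uniqueness clause of Proposition \ref{prop:Dunkl} forces $\tilde{M}_i^* H(\lambda)= c\cdot H(\lambda)$ for some $c>0$. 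Rescaling $\tilde{M}_i$ by $1/\sqrt{c}$ I obtain a lift that is unitary with respect to $H(\lambda)$.

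With $\tilde{M}_i$ unitary and sending $L_j$ to $L_{\sigma_i(j)}$, it conjugates the orthogonal projections according to $\tilde{M}_i^{-1}P_{L_{\sigma_i(j)}^\perp}\tilde{M}_i=P_{L_j^\perp}$. Combining this with $\tilde{M}_i^*(d\ell_j/\ell_j)=d\ell_{\sigma_i(j)}/\ell_{\sigma_i(j)}$ gives $\tilde{M}_i^*\nd_{\lambda,a}=\nd_{\lambda,a}$ after a direct rearrangement of the sum defining the connection $1$-form. The pair $(\nd_{\lambda,a}, H(\lambda))$ is therefore $\tilde{M}_i$-invariant, and since the Kähler quotient construction depends only on this data it descends to an $M_i$-invariance of $g_\lambda$ on $\CP^1$. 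As this holds for every $i$, $g_\lambda$ is $K$-invariant and hence pushes forward under $\Phi$.

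It then remains to determine the cone angles of the pushforward. By Lemma \ref{lem:branchedcovering} the three critical values of $\Phi$ are the images of the fixed-point orbits of $M_1,M_2,M_3$, which by Lemma \ref{lem:k4} are $0,1,\infty$; near such a critical value $\Phi$ is locally modelled on $z\mapsto z^2$ and $g_\lambda$ is smooth upstairs, so the pushforward has cone angle $2\pi/2=\pi$. The fourth point $\lambda$ is a regular value whose four preimages are $\{0,1,\infty,\lambda\}$, and there $\Phi$ is a local biholomorphism, so the cone angle $2\pi\alpha$ descends unchanged. The main place where care is needed is the uniqueness step that forces the lift to be unitary; this is where the hypothesis that all four residue traces are equal enters in an essential way, and it is the reason this argument does not extend to general weights.
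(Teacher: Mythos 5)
Your first step (the connection $\nd_{\lambda,a}$ is invariant under lifts of the $M_i$, via the uniqueness clause of Proposition \ref{prop:Dunkl} and the fact that all four weights are equal) agrees with the paper, and your cone-angle computation at the end is fine. But there is a genuine gap in the descent step: you argue that the pair $(\nd_{\lambda,a}, H(\lambda))$ is $\tilde{M}_i$-invariant and that ``the K\"ahler quotient construction depends only on this data.'' It does not. The spherical metric $g_\lambda$ is built, via Equation \eqref{eq:sphmet} of Appendix \ref{sec:unitaryconnect}, from the \emph{parallel} positive Hermitian form $h_\lambda$ supplied by Lemma \ref{lem:posdef} under the contradiction assumption --- i.e.\ the form preserved by parallel transport, with respect to which the holonomy is unitary. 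The Dunkl inner product $H(\lambda)$ is a different object: it makes the residue matrices self-adjoint but is in general not flat for $\nd_{\lambda,a}$ (indeed, the whole point of the paper is that Dunkl connections generically preserve \emph{no} nonzero Hermitian form). So showing that $\tilde{M}_i$ is unitary for $H(\lambda)$ does not give invariance of $g_\lambda$.

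What is actually needed is that $\tilde{M}_i^* h_\lambda = c\, h_\lambda$ for some $c>0$ (the scalar then cancels in the ratio $h(V,W)/h(E,E)$). Since $\tilde{M}_i$ preserves the connection, $\tilde{M}_i^* h_\lambda$ is another flat Hermitian form for $\nd_{\lambda,a}$, and proportionality follows from the uniqueness of the flat Hermitian form --- which is exactly where the paper invokes the irreducibility of the holonomy via Proposition \ref{prop:flatbundle} (equivalently, item (i) of Lemma \ref{lem:posdef}). Your closing remark identifies the unitarity of the lift with respect to $H(\lambda)$ as the crux, but that step is actually dispensable; the essential missing ingredient is the irreducibility argument pinning down $h_\lambda$ up to scale.
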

\begin{proof}
	By the uniqueness of Dunkl connections proved in Proposition \ref{prop:Dunkl} the \(\nd_{\lambda, a}\) are invariant under the lifts of \(M_i\) to \(\C^2\). On the other hand, by Proposition \ref{prop:flatbundle} the holonomy of \(\nd_{\lambda, a}\) is irreducible and therefore the Hermitian form \(h_{\lambda}\) must be preserved up to a factor. Taking quotient, the spherical metrics are invariant \(M_i^* \tilde{g}_{\lambda}=\tilde{g}_{\lambda}\).
\end{proof}

Take the elliptic curve \(C_{\lambda}\) that branches over \(0,1, \infty, \lambda\) and pull-back the metric \(\tilde{g}_{\lambda}\) to obtain a spherical torus \(\hat{g}_{\lambda}\). The family \(s(\lambda)=\hat{g}_{\lambda}\) defines a continuous map 
\begin{equation}
	s: \m \to \MSmm .
\end{equation}
By Lemma \ref{lem:k4} we have
\begin{equation}
	F \circ s = 1_{\m}
\end{equation}
where \(F\) is the forgetful map defined in Appendix \ref{ap:ms}. By Theorem \ref{EMP} the space \(\MSmm\) is homeomorphic to a punctured (compact, orientable) surface \(\Sigma_g\) of genus \(g = (m-1)(m-2)/2\) where \(m=\lfloor (2\alpha+1)/2 \rfloor\). Our choice of \(\alpha>5/2\) guarantees that \(m \geq 3\) and therefore the genus of \(\Sigma_g\) is \(\geq 1\).

\begin{lemma}\label{lem:sproper}
	The map \(s\) is proper.
\end{lemma}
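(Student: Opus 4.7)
The plan is to use the elementary general fact that any continuous section of a continuous map is automatically proper, together with the two ingredients already established just above the lemma: (i) the forgetful map $F : \MSmm \to \m$ is continuous, and (ii) $F \circ s = 1_{\m}$.

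Concretely, let $K \subset \MSmm$ be compact. By continuity of $F$, the image $F(K)$ is a compact subset of $\m$. If $\lambda \in s^{-1}(K)$ then $s(\lambda) \in K$, so
\[
\lambda \;=\; F(s(\lambda)) \;\in\; F(K),
\]
which gives the inclusion $s^{-1}(K) \subset F(K)$. On the other hand, continuity of $s$ ensures that $s^{-1}(K)$ is closed in $\m$, so it is a closed subset of the compact set $F(K)$ and therefore compact. This proves properness.

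There is no real obstacle here, because the content of the statement is purely formal and does not require any metric or complex-analytic degeneration analysis as $\lambda$ approaches the punctures $\{0,1,\infty\}$ of $\m$. The reason the lemma deserves to be singled out is its role in the next step of the argument: combined with Lemma \ref{lem:punctures}, it allows both $s$ and $F$ to be extended continuously over the punctures to a pair of maps between the closed surfaces, at which point Lemma \ref{lem:topology} together with the genus bound $g \geq 1$ from Theorem \ref{EMP} yields the desired contradiction.
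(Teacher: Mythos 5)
Your proof is correct, but it takes a genuinely different and more elementary route than the paper. You observe that a continuous section of a continuous map between Hausdorff spaces is automatically proper: from \(F\circ s=1_{\m}\) you get \(s^{-1}(K)\subset F(K)\), and closedness of \(s^{-1}(K)\) inside the compact set \(F(K)\) finishes the job. This is a clean formal argument, and the two inputs it needs --- the identity \(F\circc s=1_{\m}\) (Lemma \ref{lem:k4}) and the continuity of \(F\) --- are both available at this point in the paper; indeed the continuity of \(F\) with respect to the Lipschitz topology is used implicitly anyway when Lemma \ref{lem:punctures} is applied to extend \(F\) over the punctures, although it is worth noting that Theorem \ref{MP} only asserts properness and surjectivity of \(F\), so continuity is the one hypothesis your argument leans on that is nowhere explicitly justified. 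The paper instead argues metrically and does not use continuity of \(F\) at all: as \(\lambda_i\to\{0,1,\infty\}\) the conformal structure of the tori \(s(\lambda_i)\) degenerates, so by \cite[Corollary A.10]{MPII} their spherical systole tends to \(0\), and since \(1/\mathrm{sys}\) is proper on \(\MSmm\) by \cite[Corollary 6.26]{EPM} the metrics \(s(\lambda_i)\) must escape to the punctures. The trade-off is that your argument is softer and shorter, while the paper's gives the additional geometric information of \emph{how} the section degenerates (via collapsing systoles) at the cost of invoking two nontrivial results from the literature.

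(One typographical slip: ``\(F\circc s\)'' above should read \(F\circ s\).)
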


\begin{proof}
	Take a sequence of points \(\lambda_i\) in \(\m\) that converges to either \(0, 1\) or \(\infty\) and write \(g_i\) for the spherical tori \(s(\lambda_i)\). Since the conformal structure of \(s(\lambda_i)\) degenerates, the extremal systole tends to \(0\) as \(i \to \infty\). By \cite[Corollary A.10]{MPII} the systole \(\text{sys}(g_i)\)  of the spherical tori converges to \(0\). On the other hand, by \cite[Corollary 6.26]{EPM} the function \(1/\text{sys}\) is proper on \(\MSmm\) and therefore the sequence \(g_i\) must diverge to infinity in Lipschitz topology (i.e. converge to punctures of \(\MSmm\)).
\end{proof}

Let \(P \subset \Sigma_g\) be the finite set of punctures so that \(\MSmm\) is homeomorphic to \(\Sigma_g \setminus S\).

\begin{lemma}
	We can extend the forgetful map \(F\) and the section \(s\) both continuously over the punctures. More precisely, the following holds.
	\begin{itemize}
		\item[(i)] The map \(s\) extends over \(0, 1, \infty\) to a continuous map \(s: S^2 \to \Sigma_g\) by sending the points \(0, 1, \infty\) to points in \(P\).
		\item[(ii)] The map \(F\) extends over \(P\) to a continuous map \(F: \Sigma_g \to S^2\) by sending the points in \(P\) to \(0, 1, \infty\).
	\end{itemize}
\end{lemma}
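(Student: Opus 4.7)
The plan is to apply Lemma \ref{lem:punctures} once in each direction. In both (i) and (ii) the relevant map is already continuous (continuity of $s$ was established just above, and continuity of $F$ is built into the construction of the forgetful map recalled in Appendix \ref{ap:ms}), so the only thing to verify is properness.

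For (i) the properness of $s:\m \to \MSmm$ is exactly the content of Lemma \ref{lem:sproper}. Identifying $\m = \CP^1 \setminus \{0,1,\infty\}$ and $\MSmm = \Sigma_g \setminus P$, I apply Lemma \ref{lem:punctures} with $M=\CP^1$, $N=\Sigma_g$ and the two finite subsets $\{0,1,\infty\}\subset M$ and $P\subset N$. This immediately produces the continuous extension $s:\CP^1 \to \Sigma_g$ sending $\{0,1,\infty\}$ into $P$, which is (i).

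For (ii) the new point is the properness of $F:\MSmm \to \m$. I would prove it by running the chain of implications used in Lemma \ref{lem:sproper} in reverse. Let $g_i\in \MSmm$ be a sequence converging to a puncture in $P$. By \cite[Corollary 6.26]{EPM} the function $1/\mathrm{sys}$ is proper on $\MSmm$, hence $\mathrm{sys}(g_i)\to 0$. The comparison between the spherical systole of $g_i$ and the extremal systole of its underlying conformal structure from \cite[Corollary A.10]{MPII} then forces the extremal systole of the conformal class of $g_i$ to tend to $0$ as well, which means that the conformal class degenerates. Via the elliptic involution this translates into the four branch points on $\CP^1$ colliding in pairs, i.e.\ the cross-ratio $F(g_i)$ escapes every compact subset of $\m$ and converges to $\{0,1,\infty\}$. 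This is exactly the hypothesis required to apply Lemma \ref{lem:punctures} with $M=\Sigma_g$, $N=\CP^1$, finite sets $P$ and $\{0,1,\infty\}$, yielding the continuous extension $F:\Sigma_g \to \CP^1$ with $F(P)\subset\{0,1,\infty\}$, which is (ii).

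The main obstacle is the direction of \cite[Corollary A.10]{MPII} that is used for the properness of $F$: Lemma \ref{lem:sproper} only needed that extremal systole $\to 0$ implies spherical systole $\to 0$, whereas here I need the converse implication. Both directions should be encoded in the quantitative comparison of \cite[Corollary A.10]{MPII}, but the passage from a short spherical geodesic to conformal degeneration is the slightly more delicate half, since it relies on a short loop on a bounded-curvature surface producing a definite modulus conformal collar. Once this is in place, the rest of the argument is a mechanical application of Lemma \ref{lem:punctures}.
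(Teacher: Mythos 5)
Your treatment of item (i) coincides with the paper's: properness of \(s\) is Lemma \ref{lem:sproper}, and Lemma \ref{lem:punctures} does the rest. The problem is item (ii). The paper does not re-derive the properness of \(F\) at all: it simply invokes Theorem \ref{MP} (quoted from \cite{MPII, EPM} in Appendix \ref{ap:ms}), which states outright that for \(\alpha \notin 2\Z+1\) the forgetful map \(\MSmm \to \m\) is proper and surjective. You instead try to reconstruct this properness by reversing the chain of implications from Lemma \ref{lem:sproper}, and the reversal does not go through as written. The step you flag yourself --- that a spherical systole tending to \(0\) forces the underlying conformal class to degenerate --- is exactly the converse of what \cite[Corollary A.10]{MPII} is used for in Lemma \ref{lem:sproper}, and it is not a formal consequence of the forward direction. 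A short closed geodesic on a positively curved cone surface does not in general produce a definite-modulus conformal collar (the collar lemma is a negative-curvature phenomenon; think of the short equatorial geodesic on a thin spherical football, which lives on an entirely non-degenerate conformal sphere). Whether the converse holds here is precisely the content of the properness theorem for \(F\), so your argument is circular-adjacent: you are assuming the substance of the result you need. There is also a smaller soft spot at the start of your chain: properness of \(1/\mathrm{sys}\) tells you that \(1/\mathrm{sys}(g_i)\) eventually leaves every compact subset of its range, which yields \(\mathrm{sys}(g_i) \to 0\) only after one also notes that the systole is bounded above on \(\MSmm\).

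The fix is simply to do what the paper does: cite Theorem \ref{MP} for the properness of \(F\) and feed both proper maps into Lemma \ref{lem:punctures}. If you do want a self-contained argument for the properness of \(F\), you would need to actually extract the two-sided quantitative comparison between the spherical systole and the extremal (conformal) systole from \cite[Appendix A]{MPII}, together with the non-bubbling hypothesis \(\alpha \notin 2\Z+1\); as it stands, that half of your proof is an unverified appeal rather than a proof.
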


\begin{proof}
	The two items follow from our general topology Lemma \ref{lem:punctures} once we show that \(s\) and \(F\) are proper. The properness of \(s\) follows from Lemma \ref{lem:sproper} while the properness of \(F\) follows from Theorem \ref{MP}.
\end{proof}

\noindent\textbf{Proof of Theorem \ref{thm:main2}.}
We have  produced a pair continuous maps \(F: \Sigma_g \to S^2\) and \(s: S^2 \to \Sigma_g\) such that \(F \circ s = 1_{S^2}\) but this contradicts Lemma \ref{lem:topology}.
\qed

\subsection{Extension to several lines.}\label{sec:extension}
Suppose that \(n \geq 4\) and let \(\mn\) be the configuration space of ordered \(n\)-tuples of complex lines in \(\C^2\) going through the origin up to linear equivalence. 

Let \(\lambda = (\lambda_1, \ldots, \lambda_{n-3}) \in (\C \setminus \{0,1\})^{n-3}\) be such that \(\lambda_i \neq \lambda_j\}\) for \(i \neq j\). Given \(\lambda\) we associate to it the configuration of \(n\) lines
\begin{equation}
L_1=\{z=0\}, \hspace{1mm} L_2=\{z=w\}, \hspace{1mm}  L_3=\{w=0\}, \hspace{1mm}
L_{i+3} = \{z=\lambda_i w\} 
\end{equation}
for \(	1 \leq i \leq n-3\). This way we can identify the configuration space \(\mn\) with the hyperplane arrangement complement
\[
\left(\C \setminus \{0,1\}\right)^{n-3} \setminus \left( \cup_{i \neq j} \{\lambda_i = \lambda_j\} \right) .
\]

Let \(U \subset \R^n\) be the open convex cone made of all residue trace vectors \(a=(a_1, \ldots, a_n)\) such that \(a_i>0\) for all \(i\) and \(a_j < \sum_{i \neq j} a_i\) for all \(j\). By Proposition \ref{prop:Dunkl} for every pair \((\lambda, a)\) in \(\mn \times U\) there is a unique Dunkl connection \(\nd_{\lambda, a}\) with simple poles at the configuration of lines \(L_i\) represented by \(\lambda\) and residue traces \(a_i\). With this notation, our theorem reads as follows.

\begin{manualtheorem}{1''}\label{thm:main3}
	The pairs \((\lambda, a)\) for which the Dunkl connection \(\nd_{\lambda, a}\) preserves a non-zero Hermitian form make a proper analytic subset of \(\mn \times U\).
\end{manualtheorem}

\begin{proof}
	Write \(Z \subset \mn \times U\) for the set of all \((\lambda, a)\) such that \(\nd_{\lambda, a}\) preserves a non-zero Hermitian form. We show that: \((i)\) \(Z\) is an analytic subset and \((ii)\) \(Z\) is not all of \(\mn \times U\).
	
	\((i)\) The proof of Lemma \ref{lem:locan} repeats verbatim to show that we can locally write \(Z=\{f=0\}\) where \(f = \det Q(\lambda, a)\) where \(Q(\lambda, a)\) is a matrix as in Equation \eqref{eq:Q} whose entries depend analytically on \((\lambda, a)\).
	
	\((ii)\) By Theorem \ref{thm:main} we can
	take \(\lambda_1 \in \C \setminus \{0, 1\}\) and \(a > 0\) such that the Dunkl connection \(\nabla_0\) with simple poles at the lines \(L_1=\{z=0\}\), \(L_2=\{z=w\}\),  \(L_3=\{w=0\}\), \(L_4 =\{z=\lambda_1 w\}\) and equal residue traces \(a_i=a\) for \(1 \leq i \leq 4\) does not preserve any non-zero Hermitian form. Now if \(n \geq 5\) let's fix some arbitrary extra lines \(L_{3+i} = \{z=\lambda_i w\}\) for \(2 \leq i \leq n -3\). Consider the path in \(\mn \times U\) given by \((\lambda, a(t))\) where \(\lambda=(\lambda_1, \ldots, \lambda_{n-3})\) is fixed as above and 
	\[
	a(t) = (a, a, a, a, t, \ldots, t) \hspace{2mm} \text{ for } \hspace{2mm} 0 < t < a .
	\]
	Let \(\nabla_t\) be the continuous (indeed analytic) path of Dunkl connections \(\nd_{\lambda, a(t)}\) for \(t \in (0, a)\). It is clear (from our description of the Dunkl inner product as the hyperbolic barycentre of a weighted configuration) that
	\(\lim_{t \to 0} \nabla_t = \nabla_0\).  This gives us a continuous family of holonomy representations \(\rho(t)\) for \(0 \leq t < a\) and \(\nabla_t\) preserves a non-zero Hermitian form if and only if \(\det Q(t) = 0\) where \(Q(t)\) is a continuous path of positive semi-definite matrices given by Equation \eqref{eq:Qt}. Since \(\nabla_0\) does not preserve any non-zero Hermitian form we have \(\det Q(0) > 0\) and by continuity \(\det Q(t) > 0\) for all \(t > 0\) sufficiently small. This means that \(\nabla_t\) does not preserve any non-zero Hermitian form for small \(t>0\) hence \((\lambda, a(t))\) does not belong to \(Z\).
\end{proof}

\begin{remark}
	The statement of Theorem \ref{thm:main3} remains true in the case when all residue traces are negative \(a_i < 0\) and \(|a_j| < \sum_{i \neq j} |a_i|\) for all \(j\). To prove this one can just repeat the proof above or consider the one parameter family of connections given by scaling all residues  as in Equation \eqref{eq:scalepath}.
\end{remark}

\appendix
\section{Appendix: recollection on spherical metrics.}\label{sec:sphmet}
In this appendix we state (without proof) the results on spherical surfaces needed in this paper.
\begin{definition}\label{def:sphmet}
	Let \(S\) be a surface, let \(x_i\) be points in \(S\) and let \(\alpha_i>0\) be  positive real numbers. A spherical metric on \(S\) with cone angles \(2\pi\alpha_i\) at \(p_i\)  is a smooth constant curvature \(1\) metric on \(S\setminus\cup_i\{p_i\}\) that is isometric to the model \(dr^2 + \alpha_i^2 \sin^2 r d\varphi^2\) in local polar coordinates \((r, \varphi)\) centred at \(p_i\). 
\end{definition}

\begin{remark}
If the surface \(S\) is equipped with an orientation then the spherical metric  induces the structure of a Riemann surface on it with marked points at its conical singularities. 

A (conformal) spherical metric on a Riemann surface \(X\)  with cone angles \(2\pi\alpha_i\) at \(x_i\) is a metric as in Definition \ref{def:sphmet} on the underlying topological surface that on its regular part induces the prescribed conformal structure \(X \setminus \cup_i \{x_i\}\).
\end{remark}

\subsection{Moduli spaces.}\label{ap:ms}
Let \(S\) be some fixed compact connected oriented topological surface of genus \(g \geq 0\). We recall the definition of the moduli space of spherical metrics with prescribed cone singularities as in \cite[Section 6]{EPM}.

\begin{itemize}
	\item \(\MS_{g,n}(\alpha_1, \ldots, \alpha_n)\) is the set of equivalence classes of spherical metrics on \(S\) with cone angles \(2\pi\alpha_1, \ldots, 2\pi\alpha_n\) at a collection of marked points \(p_1, \ldots, p_n\) where two metrics are equivalent if there is an orientation preserving isometry that respects the markings. We endow \(\MS_{g,n}(\alpha_1, \ldots, \alpha_n)\) with the topology induced by the Lipschitz distance between metrics.
	
	\item The Lipschitz distance between two metrics \(g_1, g_2 \in \MS_{g,n}(\alpha_1, \ldots, \alpha_n)\) is
	\begin{equation}
	d(g_1, g_2) = \inf_f \log (\max\{\dil(f), \dil(f^{-1})\})
	\end{equation} 
	where the infimum runs over all orientation preserving bi-Lipschitz homeomorphisms that respect the markings and \(\dil(f)\) denotes the dilation of the map \(f\).\footnote{\(\dil(f)\) is the smallest \(K>0\) such that \(d_{g_2}(f(p), f(q)) \leq K d_{g_1}(p,q)\) for all \(p,q\).}
\end{itemize}

\begin{definition}[\cite{EPM}]
	A \(2\)-marking on a torus \(T\) is the choice of an isomorphism \(H_1(T, \Z_{2}) \cong (\Z_{2})^2\). The space \(\MSm\) is the set of isomorphism classes of \(2\)-marked spherical tori with \(1\) cone point of angle \(2\pi\alpha\) endowed with the Lipschitz topology, where isomorphisms are given by orientation preserving isometries compatible with the \(2\)-markings.
\end{definition}

\begin{theorem}[{\cite[Theorem 4.8]{EPM}}]\label{EMP}
	Let \(\alpha\) be a real number \(>1\), let \(s=(\alpha+1)/2\) and write
	\(m= \lfloor s \rfloor\). If \(s \notin \Z\) then \(\MSm\) is homeomorphic to a compact connected orientable surface \(\Sigma_g\) of genus
	\begin{equation}
	g = \frac{(m-1)(m-2)}{2} 
	\end{equation}
	with \(3m\) punctures.
\end{theorem}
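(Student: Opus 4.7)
Since the result is stated as the main topological classification theorem of \cite{EPM} (cited as Theorem~4.8 there), my proposal is to follow the strategy for constructing explicit algebraic models of moduli of spherical surfaces, guided by the precise shape of the answer: the genus $g = (m-1)(m-2)/2$ with $3m$ punctures is exactly the topology of a smooth projective plane curve of degree $m$ minus the intersection with three lines in general position. The plan therefore is to construct an identification of $\MSm$ with such a punctured plane curve.

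\textbf{Step 1: Developing maps and holonomy.} To a $2$-marked spherical torus $(T, g)$ with single cone point $p$ of angle $2\pi\alpha$, I would associate its developing map $\dev \colon \widetilde{T \setminus \{p\}} \to S^2 = \CP^1$ and holonomy representation $\rho \colon \pi_1(T\setminus\{p\}) \to SO(3)$. Gauss--Bonnet forces the total area to equal $2\pi(\alpha-1)$, and the non-integrality hypothesis $s = (\alpha+1)/2 \notin \Z$ guarantees that the loop around $p$ has nontrivial holonomy (a rotation of order not dividing the structural group), which in turn prevents the developing map from factoring through a finite cover. This will be the source of the smoothness of the resulting algebraic model.

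\textbf{Step 2: From holonomy data to an algebraic variety.} The $2$-marking provides a canonical basis of $H_1(T, \Z_2)$, which pins down $\rho$ up to conjugation in $SO(3)$ without residual $(\Z_2)^2$-ambiguity. I would parametrize the space of admissible $(\dev, \rho)$ in terms of the positions of preimages of three fixed points of $S^2$ (e.g.\ the $\pm 1$-eigenaxes of the involutions corresponding to the three nonzero classes in $(\Z_2)^2$) together with a finite set of compatibility equations coming from the periodicity of $\dev$ under the action of $\pi_1(T)$. These compatibility equations should reduce, after elimination of inessential variables, to a single polynomial equation of degree $m = \lfloor s \rfloor$ in two projective variables. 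The three distinguished involutions naturally split this equation into its intersection with three lines, giving $3m$ marked boundary points.

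\textbf{Step 3: Identification and smoothness.} With the model in hand, I would show that the assignment $(T, g, \mu) \mapsto [\text{polynomial data}]$ is a homeomorphism onto the smooth locus of the resulting plane curve minus the $3m$ boundary points. The non-integrality condition $s \notin \Z$ enters exactly to rule out multiple roots of the polynomial (which would correspond to metrics degenerating to coaxial or pillowcase-type metrics), ensuring the curve is smooth; then the genus formula $g = (m-1)(m-2)/2$ follows from the degree-genus formula for smooth plane curves. The $3m$ punctures correspond to Lipschitz-degenerate (coaxial) spherical metrics, whose omission is forced by Definition~\ref{def:sphmet}.

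\textbf{Main obstacle.} The substantive work is in Step~2, namely producing the correct polynomial model and verifying that its degree is exactly $m$. This requires a careful analysis of how the area constraint $\text{Area} = 2\pi(\alpha-1)$ and the structure of the developing map bound the complexity of admissible configurations; done naively, one obtains a polynomial of much larger degree, and one must use the $(\Z_2)^2$-equivariance (i.e.\ the $2$-marking) to reduce to the minimal-degree model. Verifying smoothness in Step~3 at the boundary of the interior stratum --- where coaxial limits could a priori produce nodes --- is the technical heart of the argument and is precisely where the hypothesis $s \notin \Z$ is decisive.
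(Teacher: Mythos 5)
You should first note that the paper itself offers no proof of this statement to compare against: it is quoted verbatim from \cite[Theorem 4.8]{EPM}, and the appendix where it appears explicitly states that the results there are given \emph{without proof}. So the only question is whether your sketch would stand on its own, and it does not. The decisive gap is Step~2: the assertion that the periodicity/compatibility conditions on the developing map ``should reduce, after elimination of inessential variables, to a single polynomial equation of degree $m$ in two projective variables'' is exactly the content of the theorem, repackaged through the degree--genus formula. You have reverse-engineered the target topology (a smooth degree-$m$ plane curve has genus $(m-1)(m-2)/2$ and meets three generic lines in $3m$ points) and postulated a model realizing it, but no construction of the polynomial, no computation showing its degree is $m=\lfloor (\alpha+1)/2\rfloor$, and no argument for smoothness or for the claimed homeomorphism is given. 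As you concede in your ``Main obstacle,'' the entire substance of the theorem lives in that step; connectedness of $\MSm$, which is part of the statement, is likewise never addressed.

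There is also a concrete factual error in Step~1. The holonomy of a loop around the cone point of angle $2\pi\alpha$ is a rotation by $2\pi\alpha$, which is trivial precisely when $\alpha\in\Z$; the hypothesis $s=(\alpha+1)/2\notin\Z$ only excludes $\alpha$ from the \emph{odd} integers. For $\alpha$ an even integer the theorem still applies ($s$ is then a half-integer) yet the peripheral holonomy is trivial, so your claim that $s\notin\Z$ ``guarantees that the loop around $p$ has nontrivial holonomy'' is false, and the mechanism you propose for smoothness of the model collapses at exactly these values. The condition $s\notin\Z$ (equivalently $\alpha\notin 2\Z+1$) is the non-coaxial/non-bubbling condition appearing in Theorem~\ref{MP}, not a non-triviality condition on the cone-point holonomy, and any correct proof has to use it in that role.
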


A metric \(\hat{g}\) in \(\MSm\) endows the underlying topological torus with the conformal structure of an elliptic curve \(C\). 
The curve \(C\) is a double branched cover of \(\CP^1\).\footnote{It is proved in \cite[Proposition 2.17]{EPM} that when \(\alpha \notin 2\Z+1\) then the metric \(\hat{g}\) is invariant under the conformal involution of \(C\) given by the deck transformation of the cover \(C \xrightarrow{2:1} \CP^1\).}
The critical point set \(\tilde{S} \subset C\) is made of \(4\) points and the cone point (that we shall denote by \(\tilde{y}_4\)) belongs to \(\tilde{S}\).
The \(2\)-marking gives us a labelling of the three points in \(\tilde{S} \setminus \tilde{y}_4\). Let \(y_i \in \CP^1\) be the image of \(\tilde{y}_i\) under the double cover. Thus we obtain an element \(F(\hat{g}) \in \m\) represented by the ordered \(4\)-tuple of points \((y_1, y_2, y_3, y_4)\). This defines the forgetful map
\begin{equation}\label{eq:forgmap}
F: \MSm \to \m .
\end{equation}
In our case at hand the non-bubbling condition NB \(>0\) of \cite[Definition 1.5]{MPII} reduces to \(\alpha\) being not an odd integer, as a consequence we have the following.
\begin{theorem}[\cite{MPII, EPM}]\label{MP}
	Suppose that \(\alpha \notin 2\Z+1\) then the forgetful map  \eqref{eq:forgmap} is proper and surjective.
\end{theorem}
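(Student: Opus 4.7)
The plan is to prove properness first and then deduce surjectivity by an open-and-closed argument; both steps ultimately lean on the bubble-analysis results of \cite{MPII}.

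For properness, I would take a sequence $\hat{g}_n \in \MSm$ with $y_n := F(\hat{g}_n)$ converging to some $y \in \m$ and extract a subsequence converging in $\MSm$. Since $y$ lies in $\m = \CP^1 \setminus \{0,1,\infty\}$, the configurations $y_n$ stay in a compact subset of the configuration space, so the conformal structures of the underlying elliptic double covers $C_n$ converge and the $2$-markings are eventually constant. The only remaining obstruction to Gromov--Hausdorff compactness of the spherical metrics is bubbling --- some portion of the cone mass concentrating and pinching off a spherical component in the limit. The non-bubbling hypothesis NB $>0$ of \cite[Definition 1.5]{MPII} precludes precisely this scenario, and for our specific data (a torus with a single cone of angle $2\pi\alpha$) a direct Gauss--Bonnet tally of possible bubble cone angles shows NB $>0$ is equivalent to the hypothesis $\alpha \notin 2\Z+1$. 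Under this condition the compactness theorem of \cite{MPII} for spherical cone metrics with fixed cone data then supplies the desired convergent subsequence in $\MSm$.

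For surjectivity, properness immediately yields that the image $F(\MSm)$ is closed in $\m$. To show that it is also open, I would use that $\MSm$ carries a natural complex structure coming from the identification of a $2$-marked spherical torus with its underlying elliptic curve (determined by the cross ratio of the four branch points) and that $F$ is holomorphic with respect to this structure, since it simply records this cross ratio. A non-constant holomorphic map between Riemann surfaces is open by the open mapping theorem, so $F(\MSm)$ is open. It is also non-empty, e.g.\ by producing a spherical torus as the double cover of an explicit spherical metric on $\CP^1$ with cone angles $\pi,\pi,\pi,\pi\alpha$ at the $B_2$-configuration, whose existence follows from Section \ref{sec:Dunkunitary} for suitable values of $a$. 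Since $\m$ is connected, any non-empty clopen subset is the whole space, so $F(\MSm) = \m$.

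The main obstacle is the bubble classification underlying NB $>0$: one must show that any degeneration of spherical tori with a single cone of angle $2\pi\alpha$ producing a bubble forces $\alpha$ to be an odd integer, so that our hypothesis excludes all such degenerations. This arithmetic step is the analytic heart of the argument and is carried out in \cite{MPII}; the remaining ingredients --- conformal convergence of the underlying elliptic curves, the Gromov--Hausdorff compactness package, and the holomorphic open-and-closed surjectivity argument --- are comparatively formal once NB $>0$ is in hand.
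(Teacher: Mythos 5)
The paper does not prove this statement at all: Appendix \ref{sec:sphmet} explicitly records it \emph{without proof} as a citation of \cite{MPII} and \cite{EPM} (the only in-paper content being the observation that the non-bubbling condition NB $>0$ of \cite[Definition 1.5]{MPII} reduces here to \(\alpha \notin 2\Z+1\)). Your properness paragraph is consistent with that: it is essentially a gloss on the compactness theory of \cite{MPII}, and deferring the bubble classification to that reference is exactly what the paper itself does. (One caveat: bubbling is not quite ``the only remaining obstruction''; one must also rule out collapsing, e.g.\ via the systole bounds that the paper quotes from \cite[Corollary 6.26]{EPM}, but again this lives inside the cited compactness package.)

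The genuine gap is in your surjectivity argument. You claim \(\MSm\) carries ``a natural complex structure coming from the identification of a \(2\)-marked spherical torus with its underlying elliptic curve'' and that \(F\) is holomorphic because it ``records this cross ratio.'' But that identification \emph{is} the map \(F\), and \(F\) is very far from injective: by Theorem \ref{EMP} the source \(\MSm\) is a punctured surface of genus \((m-1)(m-2)/2 \geq 1\) for \(m\geq 3\), while the target \(\m\) is a thrice-punctured sphere, so recording the cross ratio cannot transport a complex structure onto \(\MSm\); pulling back a complex structure along \(F\) would require \(F\) to be a local homeomorphism, which is essentially the openness you are trying to prove. Without an independently defined structure in which \(F\) is open (or non-constant holomorphic), the open-and-closed argument does not get off the ground. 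The standard route, and the one effectively used in \cite{MPII, EPM}, is different: once properness is known, \(F\) extends to a continuous map \(\Sigma_g \to S^2\) between compact surfaces (cf.\ Lemma \ref{lem:punctures}), one shows this extension has non-zero degree (or, alternatively, one proves existence of the spherical metric for every conformal structure directly, which is a genuine PDE/deformation result), and surjectivity follows. Your non-emptiness input via the \(B_2\)-configuration is fine, but non-emptiness plus closedness of the image does not suffice without openness or a degree computation.
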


\subsection{Standard unitary connections and spherical metrics.}\label{sec:unitaryconnect}
We recall the natural correspondence between standard unitary connections on \(\C^2\) and spherical metrics on \(\CP^1\) from  \cite[Section 3]{Pan} (see in particular Theorem 1.8 and Proposition 4.7).

Let \(\nabla\) be a standard unitary connection on \(\C^2\) with poles at  lines \(L_i\) 
\[
\nabla = d - \sum A_i \frac{d\ell_i}{\ell_i}
\]
and let \(h\) be a positive definite Hermitian form preserved by \(\nabla\). Moreover, we will assume that the residue traces \(a_i=\tr A_i \in \R^*\) satisfy the following:
\begin{equation}\label{eq:positivity}
a_i < 1  \hspace{2mm} \text{ for all } i \hspace{2mm} \text{ and } \hspace{2mm} c = \frac{1}{2} \sum_i a_i < 1 . 	
\end{equation}

Let \(x \in \CP^1 \setminus \cup_i \{x_i\}\) where \(x_i\) are the points corresponding to the lines \(L_i\).
Let \(\pi: \C^2 \setminus \{0\} \to \CP^1\)  be the natural projection map and write \(L= \pi^{-1}(x)\). 
Given two tangent vectors \(v, w\) in \(T_x \CP^1\) we can lift them by \(\pi\) to vector fields \(V, W\) along \(L\) which are orthogonal to \(TL\) with respect to \(h\). 
Let \(E = (1-c)^{-1} (z \p_z + w \p_w)\) be the Euler vector field of \(\nabla\).
By homogeneity, the ratio \(h(E, E)^{-1}h(V, W)\) is constant along \(L\). The expression
\begin{equation}\label{eq:sphmet}
	g(v, w) = 4 \cdot \frac{h(V, W)}{h(E, E)}
\end{equation}
defines a metric on \(\CP^1 \setminus \cup_i \{x_i\}\) locally isometric to the \(2\)-sphere of radius \(1\).

\begin{lemma}\label{lem:corr1}
	The metric \(g\) extends over \(x_i\) with cone angle \(2\pi(1-a_i)>0\).
\end{lemma}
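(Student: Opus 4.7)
The plan is to compute the conformal factor of $g$ in local holomorphic coordinates around $x_i$ and read off the cone angle. Without loss of generality take $L_i = \{w = 0\}$, so that $x_i = [1{:}0] \in \CP^1$; use the affine coordinate $\xi = w/z$ centred at $x_i$ and the holomorphic section $\sigma(\xi) = (1, \xi)$ of $\pi$. The lift of $\p_\xi$ via $\sigma$ is the vector field $V = \p_w$, while the Euler field there is $E = (1-c)^{-1}(\p_z + \xi \p_w)$. Replacing $V$ by its $h$-orthogonal projection $V^\perp = V - (h(V,E)/h(E,E)) E$ in \eqref{eq:sphmet} and applying the Gram identity $h(V,V)h(E,E) - |h(V,E)|^2 = (\det H) |V \wedge E|^2$ (with $H$ the matrix of $h$ in the frame $\p_z, \p_w$, and the norm on $\Lambda^2 \C^2$ standard), together with the computation $V \wedge E = -(1-c)^{-1} \p_z \wedge \p_w$, yields
\begin{equation*}
g(\p_\xi, \p_\xi) = 4(1-c)^{-2}\, \frac{\det H(1, \xi)}{h(E,E)(1, \xi)^2}.
\end{equation*}

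Next I would compute $\det H$ from the parallel condition. Differentiating $\nabla h = 0$ and tracing gives $d \log \det H = -\tr A - \tr \bar A$ for the connection form $A = \sum_j A_j\, d\ell_j/\ell_j$; using $a_j = \tr A_j \in \R$ this integrates to $\det H = C \prod_j |\ell_j|^{-2 a_j}$ for some positive constant $C$. Evaluating at $(1, \xi)$ gives $\det H(1, \xi) = C |\xi|^{-2 a_i} F(\xi)$, where $F$ is continuous and positive near $\xi = 0$ (it collects the bounded contributions of $L_j$ for $j \neq i$).

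The crucial step is to show that $h(E, E)(1, \xi)$ has a positive finite limit as $\xi \to 0$. For this, apply the normal form theorem for flat logarithmic connections \cite[Theorem A.11]{dBP} at a generic point $p \in L_i \setminus \{0\}$: there is a holomorphic frame $v_1, v_2$ with $v_1$ tangent to $L_i$ and $v_2$ transverse, in which $\nabla = d - \mathrm{diag}(0, a_i)\, dw/w$. Writing $\nabla h = 0$ in this frame and assuming $a_i \in \R \setminus \Z$ (to rule out a multivalued $\bar w^{-a_i}$ in the off-diagonal entry) forces $h(v_1, v_1) = C_1 > 0$, $h(v_1, v_2) = 0$, and $h(v_2, v_2) = C_2 |w|^{-2 a_i}$ with $C_2 > 0$. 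Since $E$ is tangent to $L_i$ along $L_i$ (because $E \propto z \p_z$ there), the expansion $E = \mu_1 v_1 + \mu_2 v_2$ satisfies $\mu_2 = O(w)$ with $\mu_1(p) \neq 0$, giving
\begin{equation*}
h(E, E) = |\mu_1|^2 C_1 + |\mu_2|^2 C_2 |w|^{-2 a_i} = |\mu_1(p)|^2 C_1 + O(|w|^{2(1-a_i)}),
\end{equation*}
which tends to $|\mu_1(p)|^2 C_1 > 0$ as $w \to 0$ (using $a_i < 1$ to control the error).

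Combining the three steps gives $g(\p_\xi, \p_\xi) = |\xi|^{-2 a_i} G(\xi)$ near $\xi = 0$ with $G$ continuous and positive. After the substitution $\eta = \xi^{1-a_i}/(1-a_i)$ the metric becomes a smooth positive multiple of $|d\eta|^2$, so the total angle around $x_i$ equals $2\pi(1 - a_i) > 0$, as claimed. The main obstacle is the third step: the geometric crux is that $E$ is tangent to $L_i$, placing it in the non-singular eigendirection of the normal form of $\nabla$ and thereby preventing $h(E, E)$ from acquiring the singular factor $|w|^{-2 a_i}$.
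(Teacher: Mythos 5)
Your argument is correct and rests on the same key fact as the paper's one-sentence proof: the normal form of \(\nabla\) along \(L_i\) forces the parallel form \(h\) to be block-diagonal in the frame \(v_1,v_2\), namely \(h=C_1\,v_1^*\otimes \bar v_1^*+C_2|\ell_i|^{-2a_i}\,v_2^*\otimes\bar v_2^*\), i.e.\ the product of a flat factor tangent to \(L_i\) with a \(2\)-cone of angle \(2\pi(1-a_i)\), which is precisely the statement the paper invokes. Your Gram-determinant computation of the conformal factor, combining \(\det H=C\prod_j|\ell_j|^{-2a_j}\) with the boundedness of \(h(E,E)\) (which uses that \(E\) is tangent to \(L_i\) and \(a_i<1\)), is a clean explicit way of carrying out the descent to \(\CP^1\) that the paper leaves implicit.
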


\begin{proof}[Sketch proof]
	This local statement follows from the fact that the Hermitian metric \(h\) is isometric  close to \(L_i \setminus \{0\}\) to the product of a \(2\)-cone of total angle \(2\pi(1-a_i)\) with a flat factor \(\R^2\) tangent to \(L_i\). 
\end{proof}

Conversely, we have the following.

\begin{lemma}\label{lem:corr2}
	Let \(g\) be a spherical metric on \(\CP^1\) with cone angles \(2\pi\alpha_i>0\) at points \(x_i\). Then there is a standard unitary connection \(\nabla\) on \(\C^2\) with residue traces \(a_i=1-\alpha_i\) at the lines \(L_i=\pi^{-1}(x_i)\) and a positive definite Hermitian form \(h\) preserved \(\nabla\) such that \(g\) is given by Equation \eqref{eq:sphmet} on its regular part.
\end{lemma}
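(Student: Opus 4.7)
My plan is to invert the K\"ahler-quotient construction of Lemma~\ref{lem:corr1} by pulling back the flat Hermitian structure on $\C^2$ through a homogeneous lift of the developing map of $g$. Set $X = \CP^1 \setminus \{x_1, \ldots, x_n\}$ and write $\delta$ for a local holomorphic developing map of $g$, so that $\delta$ pulls back the round metric to $g$ and the monodromy $\bar\rho: \pi_1(X) \to PSU(2)$ sends a small positive loop around $x_i$ to a rotation by $2\pi\alpha_i$. I lift $\delta$ to a multivalued holomorphic map $\widetilde{\delta}: \C^2 \setminus \cup_i L_i \to \C^2$ covering $\delta \circ \pi$, where $\pi: \C^2 \setminus \{0\} \to \CP^1$ is the projection, with two extra properties. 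First, $\widetilde{\delta}$ is homogeneous of degree $c = \tfrac{1}{2}\sum_i a_i$ under scaling, meaning $\widetilde{\delta}(\lambda z, \lambda w) = \lambda^c \widetilde{\delta}(z,w)$ (multivalued when $c \notin \Z$); locally one may write $\widetilde\delta = w^c (\delta_1(z/w), \delta_2(z/w))$ where $\delta = [\delta_1:\delta_2]$. This fixes the monodromy of the central loop in $\pi_1(\C^2 \setminus \cup_i L_i)$ to scalar multiplication by $e^{2\pi i c}$. Second, the monodromy of $\widetilde\delta$ takes values in $U(2)$: each lift $M_i \in U(2)$ of $\bar\rho(\gamma_i)$ with eigenvalues $(1, e^{2\pi i a_i})$ is obtained by combining a sign choice for the lift $SO(3) \to SU(2)$ with a scalar twist, and the product $M_1 \cdots M_n$ is a central element of $U(2)$ with determinant $e^{2\pi i \sum a_i} = e^{4\pi i c}$, matching that of $e^{2\pi i c}\Id$; an overall sign adjustment yields $M_1\cdots M_n = e^{2\pi i c}\Id$, which is consistent via Lemma~\ref{lem:fundgroup} with the central monodromy coming from the first property.

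With $\widetilde\delta$ in hand, set $h = \widetilde\delta^*\langle\cdot,\cdot\rangle_{\mathrm{std}}$ and $\nabla = \widetilde\delta^* d$. Both are single-valued on $\C^2 \setminus \cup_i L_i$ because the gluing cocycle is unitary; $h$ is positive definite Hermitian, $\nabla$ is flat, torsion-free (pullback of the trivial connection under a local biholomorphism), and preserves $h$. Near each $L_i$ the map $\widetilde\delta$ has the local form $(z,w) \mapsto (z^{\alpha_i}, w)$ in suitable holomorphic coordinates, from which one reads off that $\nabla$ has a simple pole at $L_i$ with a rank-one residue $A_i$ whose eigenvalues are $(0, a_i)$, whose image is the normal direction to $L_i$, and whose kernel satisfies $\ker A_i = L_i$. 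Scaling equivariance of $\widetilde\delta$ together with Hartogs' theorem promotes $\nabla$ to a meromorphic connection on $T\C^2$ of the form \eqref{eq:std1}, and the scalar central monodromy forces $\sum_i A_i = c \cdot \Id$, giving \eqref{eq:std2}. Finally, Equation~\eqref{eq:sphmet} is verified on a single developing chart, where the computation reduces to the standard Hopf quotient of the flat Hermitian structure on $\C^2$ producing the round metric on $\CP^1$.

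The two most delicate steps are building the homogeneous lift $\widetilde\delta$ with compatible $U(2)$-monodromy and central character (essentially the determinant and sign bookkeeping above), and then extracting the residue matrices $A_i$ from the local model of $\widetilde\delta$ near each $L_i$. I expect the latter to be the main obstacle: the multivalued degree-$c$ homogeneity has to be unravelled carefully to show that the Christoffel form of $\widetilde\delta^* d$ is genuinely a constant matrix times $d\ell_i/\ell_i$, and not merely something with a simple pole and variable residue; this is where the scaling invariance needs to be combined with the Fuchsian normal form to pin down the constancy of $A_i$.
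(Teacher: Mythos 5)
Your overall strategy --- realizing $\nabla$ and $h$ as the pull-back of the flat structure on $\C^2$ under a homogeneous multivalued lift $\widetilde\delta$ of the developing map --- is in substance the same construction as the paper's, which phrases it as the Riemannian cone $d\rho^2+\rho^2\bar g$ over the Hopf lift of $g$ to $S^3$; your $\widetilde\delta$ is precisely the developing map of that polyhedral K\"ahler cone. However, there is a concrete error: the homogeneity degree of $\widetilde\delta$ must be $1-c$, not $c$. Several independent checks force this. Euler's identity for a degree-$k$ homogeneous map gives $D\widetilde\delta(e)=k\,\widetilde\delta$, so the normalization $h(E,E)=\|\widetilde\delta\|^2$ needed to verify Equation~\eqref{eq:sphmet} with $E=(1-c)^{-1}e$ requires $k=1-c$. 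Likewise, restricting the connection form $\Omega=-G^{-1}dG$ (with $G=D\widetilde\delta$) to the central line $t\mapsto tx_0$ and using $G(tx_0)=t^{k-1}G(x_0)$ yields $\Omega|_{C_0}=(1-k)\tfrac{dt}{t}\Id$, which must equal the Euler system $c\,\tfrac{dt}{t}\Id$ forced by $\sum_iA_i=c\cdot\Id$; again $k=1-c$. (Equivalently, the paper's remark that complex lines through the origin become $2$-cones of total angle $2\pi(1-c)$ says the developing map restricted to such a line is $t\mapsto t^{1-c}$.) With your choice $k=c$, the local models near the $L_i$ still produce residues of trace $a_i$, so $\sum_i\tr A_i=2c$, while the central monodromy gives $\sum_iA_i=(1-c)\Id$ of trace $2(1-c)$; the discrepancy materializes as an unwanted extra pole of $\widetilde\delta^*d$ along $\{w=0\}$. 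In the case $c=0$ (which does occur, e.g.\ when $\sum_i\alpha_i=n$) your explicit formula $w^c(\delta_1(z/w),\delta_2(z/w))$ is not even a local biholomorphism, so the pulled-back objects degenerate entirely.

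A secondary gap is the "overall sign adjustment" in your $U(2)$ bookkeeping. Once you demand that each $M_i$ lifts $\bar\rho(\gamma_i)$ with eigenvalues exactly $(1,e^{2\pi ia_i})$, the lift is uniquely determined (for $\alpha_i\notin\Z$): of the two $SU(2)$ lifts only one admits a scalar twist with that spectrum. Hence there is no residual sign to adjust, the product $M_1\cdots M_n$ is $\pm e^{2\pi ic}\Id$, and the correct sign has to be extracted from the geometry rather than from determinant counting. Both issues are repairable --- replacing $c$ by $1-c$ throughout restores consistency and recovers the paper's cone construction --- but as written the lift you build is not the developing map of the cone metric and the resulting connection is not the claimed standard connection with residue traces $a_i=1-\alpha_i$.
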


\begin{proof}[Sketch proof]
	The metric \(g\) with cone angles \(2\pi\alpha_i>0\) at \(x_i \in \CP^1\) lifts through the Hopf map to a constant curvature \(1\) metric \(\bar{g}\) on the \(3\)-sphere with cone angles \(2\pi\alpha_i\) in transverse directions to the Hopf circles lying over the points \(x_i\). The Riemannian cone \(d\rho^2 + \rho^2 \bar{g}\) defines a polyhedral K\"ahler cone metric on \(\C^2\) whose Levi-Civita connection is standard in complex coordinates that linearise the holomorphic Euler vector field \((\rho \p_{\rho})^{1,0}\). 
\end{proof}

\begin{remark}
	Note that the condition \(c < 1\) is equivalent to the Gauss-Bonnet constraint
	\(\chi(S^2) + \sum_i (\alpha_i-1)>0\). 
	This number \(c\) can be interpreted in terms of the intrinsic geometry of the polyhedral K\"ahler cone. Namely,
	the restriction of the polyhedral K\"ahler cone metric on \(\C^2\) to any complex line going through the origin is a \(2\)-cone of total angle \(2\pi(1-c)\).
\end{remark}

\begin{remark}
In the case that \(0<a_i<1\) for all \(i\) so that the cone angles \(2\pi\alpha_i\) are in the interval \((0, 2\pi)\), the 
Troyanov/Luo-Tian Theorem \cite{Troy, LT} asserts that
a spherical metric with on \(\CP^1\) with cone angles \(2\pi\alpha_i\) at \(x_i\) exists if and only if
\[
(1-\alpha_j) < \sum_{i \neq j} (1-\alpha_i) \hspace{2mm} \text{ for all } j.
\]
It is interesting to note that this agrees with the condition for the existence of a Dunkl connection given by Proposition \ref{prop:Dunkl}. 

Our main results shows that, in general, the standard unitary connections corresponding to spherical metrics are \emph{not} Dunkl.
More precisely, for any \(a \in (0, 1/2)\) and \(\lambda \in \m\) there is a unique spherical metric on \(\CP^1\) with cone angle \(2\pi(1-a)\) at \(0,1, \lambda, \infty\). 
Write \(\ns_{\lambda, a}\) for the corresponding standard unitary connections.
With this notation,
Theorem \ref{thm:main} implies that \(\nd_{\lambda, a} \neq \ns_{\lambda, a}\) whenever the pair \((\lambda, a)\) belongs to an open dense subset of \(\m \times (0, 1/2)\).	
\end{remark}

\begin{remark}	
The natural correspondence between standard unitary connections and spherical metrics given by Lemmas	\ref{lem:corr1} and \ref{lem:corr2} is continuous with respect to the natural topology on the space of standard connections\footnote{The space of standard connections with prescribed residue traces is an affine space of dimension \(n-3\) and so it has a natural topology.} and the Lipschitz topology on the space of spherical metrics. Indeed, 
if the monodromy of the spherical metrics is non-coaxial (i.e. irreducible holonomy of the associated standard connections) then the space of spherical surfaces with Lipschitz topology embeds continuously into the \((n-3)\)-dimensional affine space of projective structures (see \cite{MPinprep}). On the other hand, a standard connection induces a projective structure and this defines a continuous map between these the two affine spaces. In the co-axial or reducible holonomy case the correspondence should be taken between spherical metrics and pairs \((\nabla, h)\) where \(h\) is a positive Hermitian form preserved by \(\nabla\).
\end{remark}

\bibliographystyle{alpha}
\bibliography{BIBLIO}

\Address

\end{document}